\theoremstyle{plain}
\newtheorem{thm}{Theorem}
\newtheorem{prop}{Proposition}
\newtheorem{bem}{Remark}
\newtheorem*{thm*}{Theorem}
\newtheorem*{prop*}{Proposition}
\newcommand{\vecIII}[3]{
\ensuremath{
\begin{pmatrix}
#1 \\ #2 \\ #3 \\
\end{pmatrix}}}
\newcommand{\matIII}[9]{
\ensuremath{ 
\begin{pmatrix}  
#1 & #2 & #3\\
#4 & #5 & #6\\ 
#7 & #8 & #9\\
\end{pmatrix}}}
\providecommand{\sm}{\setminus}
\providecommand{\N}{\mathbb{N}}
\providecommand{\R}{\mathbb{R}}
\providecommand{\Z}{\mathbb{Z}}
\providecommand{\C}{\mathbb{C}}
\providecommand{\eps}{\varepsilon}
\providecommand{\ov}{\overline}
\providecommand{\skp}[2]{\langle#1,#2\rangle}
\providecommand{\bigskp}[2]{\left\langle#1,#2\right\rangle}
\providecommand{\fR}{\mathfrak{R}}
\DeclareMathOperator{\supp}{supp}
\DeclareMathOperator{\curl}{curl}
\DeclareMathOperator{\sign}{sign}
\DeclareMathOperator{\diver}{div}
\DeclareMathOperator{\id}{id}
\DeclareMathOperator{\pv}{p.v.}
\DeclareMathOperator{\Id}{Id}
\DeclareMathOperator{\Real}{Re}
\DeclareMathOperator{\Imag}{Im}
\renewcommand{\qed}{\hfill $\Box$}
\begin{document}

\allowdisplaybreaks

\title[Solutions for nonlinear Helmholtz and curl-curl equations]{Uncountably many solutions
for nonlinear Helmholtz and curl-curl equations with general nonlinearities}

\author{Rainer Mandel}
\address{R. Mandel \hfill\break
Karlsruhe Institute of Technology \hfill\break
Institute for Analysis \hfill\break
Englerstra{\ss}e 2 \hfill\break
D-76131 Karlsruhe, Germany}
\noindent\email{Rainer.Mandel@kit.edu}
\date{\today}

\subjclass[2000]{Primary: 35Q60, 35Q61, 35J91}
\keywords{Nonlinear Helmholtz equations, Curl-Curl equations, Limiting Absorption Principles, Herglotz waves}

\begin{abstract}
  We obtain uncountably many solutions of nonlinear Helmholtz and curl-curl equations on the
  entire space using a fixed point approach. As an auxiliary tool a Limiting Absorption Principle for the
  curl-curl operator is proved.
\end{abstract}

\maketitle
\allowdisplaybreaks

\section{Introduction and main results}

The propagation of light in nonlinear media is  governed by Maxwell's equations
\begin{align*}
  &\nabla\times \mathcal E + \partial_t \mathcal B =0, &&\hspace{-3cm}\diver(D)=0, \\
  &\nabla\times \mathcal H - \partial_t \mathcal D =0, &&\hspace{-3cm}\diver(B)=0 
\end{align*}
for the electric respectively magnetic field $\mathcal E,\mathcal H:\R^3\times\R\to\R^3$, the displacement
field $\mathcal D:\R^3\times\R\to\R^3$ and the magnetic induction $\mathcal B:\R^3\times\R\to\R^3$. 
Here, the effect of charges and currents is neglected. The nonlinearity of the medium is typically expressed
through nonlinear material laws of the form $\mathcal D=\eps(x)\mathcal E+\mathcal P$ and the linear relation
$\mathcal H=\frac{1}{\mu}\mathcal B$ for the permittivity function $\eps:\R^3\to\R$ and the magnetic
permeability $\mu\in\R\sm\{0\}$. In~\cite{Med_GroundStates,BaDoPlRe_GroundStates} it was shown that special
solutions of the form $\mathcal E(x,t)=E(x)\cos(\omega t)$, $\mathcal P(x,t)=P(x,E)\cos(\omega t)$  can be approximately
described by solutions of the nonlinear curl-curl equations 
\begin{equation}\label{eq:NLM_allgemein}
  \nabla\times\nabla\times E + V(x)E = f(x,E) \qquad \text{in }\R^3
\end{equation}
where $V(x)=-\mu\omega^2\eps(x)\leq 0$ and $f(x,E)=\mu\omega^2 P(x,E)$. We stress that $V$ is nonpositive and
it becomes a negative constant in the simplest and most relevant case of the vacuum where $\eps(x)\equiv
\eps_0>0$. We refer to Section~1.3 in~\cite{BaDoPlRe_GroundStates} for further details concerning the
modelling aspect of \eqref{eq:NLM_allgemein}.
One of our main results (Theorem~\ref{thm:NLM}) will provide a new existence result for solutions of this problem. A
simplified version of~\eqref{eq:NLM_allgemein} is the nonlinear Helmholtz equation
\begin{equation} \label{eq:NLH_allgemein}
  -\Delta u + V(x) u = f(x,u) \qquad\text{in }\R^n,
\end{equation}
which  in the two-dimensional case $n=2$ can be derived from~\eqref{eq:NLM_allgemein} via the ansatz 
$E(x_1,x_2,x_3)=(0,0,u(x_1,x_2))$. In this paper we are interested  in
solutions of~\eqref{eq:NLM_allgemein},\eqref{eq:NLH_allgemein} when the potential $V$ is a negative constant
and the nonlinearity is rather general. Our principal motivation is to show that for large classes of
nonlinearities there are uncountably many solutions of these equations sharing the same decay rate
$|x|^{\frac{1-n}{2}}$ as $|x|\to\infty$ but with a different farfield pattern.

\medskip 

We first recall some facts about the nonlinear Helmholtz equation with constant
potential
\begin{equation} \label{eq:NLH}
  -\Delta u   - \lambda u = f(x,u) \qquad\text{in }\R^n.
\end{equation}
In 2004 Guti\'{e}rrez~\cite{Gut_nontrivial} set up a fixed point approach for this equation when
$f(x,u)=|u|^2u$, $n\in\{3,4\}$ and $u$ is complex-valued. Using an $L^p$-version of the Limiting Absorption
Principle for the Helmholtz operator (Theorem~6 in~\cite{Gut_nontrivial}) she found that small
nontrivial solutions of~\eqref{eq:NLH} can be obtained via the Contraction Mapping Theorem (Banach's Fixed
Point Theorem) on a small ball in $L^4(\R^n)$. Around ten years later 
Ev\'{e}quoz and Weth started to write a series of papers
\cite{Ev_dual,Eveq_plane,EvWe_branch,EvWe_dual,EvWe_real,EvYe_DualGS} containing new methods to prove existence results
for solutions of~\eqref{eq:NLH} that, in contrast to Guti\'{e}rrez' solutions, are large in suitable norms.
Some of these results were extended by the author in~\cite{MaMoPe_oscillating,Man_LAPperiodic}. In each of the aforementioned papers the nonlinearity
has to satisfy quite specific conditions that allow to deal with slow decay rates of solutions at infinity.

\medskip

In the case of power-type nonlinearities $f(x,u)=Q(x)|u|^{p-2}u$ one of the main results
in~\cite{EvWe_dual} is that there is an unbounded sequence of solutions in $L^p(\R^n)$ provided
$\frac{2(n+1)}{n-1}<p<\frac{2n}{n-2}$ and $Q\in L^\infty(\R^n)$ is positive and evanescent at infinity. If
$Q$ is $\Z^n$-periodic and positive (for negative $Q$ see Theorem~1.3,~1.4 in~\cite{MaMoPe_oscillating}) the
existence of one nontrivial solution is shown. These solutions are obtained using quite sophisticated dual
variational methods and the solution at the mountain pass level of the dual functional is called a dual ground
state of the equation. One of the drawbacks of this approach is that the assumption on $p$ does not allow for
cubic nonlinearities, which certainly are the most interesting ones for applications in physics. Moreover,
sign-changing or non-monotone nonlinearities can not be treated. In addition to that, solutions 
have to be looked for in $L^p(\R^n)$. This is a problem given that solutions decay
slowly at infinity so that a solution theory in $L^q(\R^n)$ with $q>p$ is more convenient a priori. 
For this reason we will not consider dual variational methods but rather revive Guti\'{e}rrez' fixed
point approach \cite{Gut_nontrivial}. 

\medskip

Our refinement of Guti\'{e}rrez' method allows to discuss nonlinear Helmholtz
equations with very general nonlinearities that improve existing results even in the case of power-type
nonlinearities as we will see below.  In our main result dealing with~\eqref{eq:NLH} we show that   in the
case $f(x,u)=Q(x)|u|^{p-2}u$ with $Q\in L^s(\R^n)\cap L^\infty(\R^n)$ we get solutions for all exponents
 \begin{equation} \label{eq:choice_of_ps}
    p > \max\Big\{ 2, \frac{2s(n^2+2n-1)-2n(n+1)}{(n^2-1)s} \Big\}.
\end{equation}
More generally, we can treat nonlinearities satisfying the following conditions:
 \begin{itemize}
  \item[(A)] $f:\R^n\times\R\to \R$ is a Carath\'{e}odory function satisfying for some 
  $Q\in L^s(\R^n)\cap L^\infty(\R^n)$
  \begin{align} \label{eq:conditions_f}
    \begin{aligned}
    |f(x,z)| &\leq Q(x)|z|^{p-1}  &&(x\in\R^n,|z|\leq 1)   \\
    |f(x,z_1)-f(x,z_2)| &\leq Q(x)(|z_1|+|z_2|)^{p-2}|z_1-z_2|  &&(x\in\R^n,|z_1|,|z_2|\leq 1).
    \end{aligned}   
  \end{align}
  where $s\in [1,\infty]$ and $p$ as in~\eqref{eq:choice_of_ps}. 
\end{itemize}
We stress that only conditions near zero are needed since we are going to construct small solutions in
$L^q(\R^n)$ which will turn out to be small also in $L^\infty(\R^n)$. Clearly, $|z|\leq 1$ can be replaced by
$|z|<z_0$ for any given $z_0>0$. We mention that in the case $s\leq \frac{n+1}{2}$ all exponents $p>2$ in the
superlinear regime are allowed.   

\medskip

The fundamental tools of Guti\'{e}rrez' fixed point approach are an $L^p$-version of the Limiting Absorption
Principle for the Helmholtz operator $-\Delta-\lambda$\,($\lambda>0$) and results about the so-called Herglotz
waves. As we will recall in Proposition~\ref{prop:Herglotz_waves}, these functions are analytic solutions of
the linear Helmholtz equation $-\Delta\phi-\lambda\phi=0$ in $\R^n$. They are given by the formula
\begin{equation*} 
    \widehat{h\,d\sigma_\lambda}(x) := \frac{1}{(2\pi)^{n/2}} \int_{S_\lambda^{n-1}} h(\xi)
    e^{-i\skp{x}{\xi}}\,d\sigma_\lambda(\xi) 
\end{equation*}
for complex-valued densities $h\in L^2(S^{n-1}_\lambda;\C)$. Here, $\sigma_\lambda$ denotes the canonical surface measure of the sphere
 $S_\lambda^{n-1}=\{\xi\in\R^n : |\xi|^2 = \lambda\}$. In order to ensure the real-valuedness and good
 poinwise decay properties of $\widehat{h\,d\sigma_\lambda}$ at infinity, we will consider a
 smaller class of densities $h$ belonging to the set 
 $$
   X_\lambda^\delta:= \Big\{h\in C^m(S_\lambda^{n-1};\C):h(\xi)=\ov{h(-\xi)},\|h\|_{C^m}\leq \delta \Big\}
 $$ 
 where $m:=\lfloor \frac{n-1}{2}\rfloor+1$. Here, our approach
 differs from \cite{Gut_nontrivial} where $L^2$-densities are used.   
 Theorem~\ref{thm:existence_via_BanachFPT} shows that for all    $h\in X_\lambda^\delta$ we
 find a strong solution of~\eqref{eq:NLH} that resembles
 $|x|^{\frac{1-n}{2}}u^\infty_h(x)$ at infinity where 
 $$
   u^\infty_h(x) := \lambda^{\frac{n-3}{4}} \sqrt{\frac{\pi}{2}}  
   \Real\left( e^{i(\frac{n-3}{4}\pi-\sqrt\lambda |x|)} \Big(  
    \widehat{f(\cdot,u_h)}(-\sqrt\lambda \hat x) + i\cdot 
    \frac{2\sqrt\lambda}{\pi} h(\sqrt\lambda \hat x)\Big)\right),\qquad 
    \hat x:=\frac{x}{|x|}.
 $$
 More precisely, we show the following.
 
\begin{thm} \label{thm:existence_via_BanachFPT}
  Assume (A) and $\lambda>0$. Then there are $\delta>0$ and mutually different solutions $(u_h)_{h\in
  X_\lambda^\delta}$ of~\eqref{eq:NLH} that form a $W^{2,r}(\R^n)$-continuum and satisfy
  $\|u_h\|_{W^{2,r}(\R^n)}\to 0$ as $\|h\|_{C^m}\to 0$ for any given $r\in (\frac{2n}{n-1},\infty)$ as well as
  $$
    \lim_{R\to\infty} \frac{1}{R} \int_{B_R} \left| u_h(x) -  
    |x|^{\frac{1-n}{2}} u_h^\infty(\hat x) \right|^2 \,dx = 0.
  $$
  If additionally $p>\frac{(3n-1)s-2n}{(n-1)s}$ holds, then $|u_h(x)|\leq C_h(1+|x|)^{\frac{1-n}{2}}$ for all
  $x\in\R^n$.
\end{thm}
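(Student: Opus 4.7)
The plan is to reformulate~\eqref{eq:NLH} as a fixed point equation
$$
  u = T_h(u) := \phi_h + \Real\bigl(\mathcal{R}_\lambda(f(\cdot,u))\bigr),
$$
where $\phi_h := \widehat{h\,d\sigma_\lambda}$ is the Herglotz wave (real-valued thanks to the conjugacy condition defining $X_\lambda^\delta$) and $\mathcal{R}_\lambda=(-\Delta-\lambda-i0)^{-1}$ is the outgoing resolvent supplied by the $L^p$-Limiting Absorption Principle of Kenig--Ruiz--Sogge and Guti\'errez. Any fixed point is a distributional and, after a Calder\'on--Zygmund bootstrap applied to $-\Delta u = \lambda u + f(\cdot,u)$, a strong solution of~\eqref{eq:NLH}.

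I would set up the Banach iteration in a space $Y \hookrightarrow L^q(\R^n)\cap L^\infty(\R^n)$ with $q$ slightly above $\tfrac{2n}{n-1}$, chosen so that both $\phi_h$ for $h\in X_\lambda^\delta$ (bounded in $Y$ by stationary phase applied to the Herglotz integral, using the $C^m$-regularity of $h$) and $\mathcal{R}_\lambda$ composed with the embedding of an appropriate dual are bounded. Combining the bound in~(A) with H\"older's inequality
$$
  \|f(\cdot,u)\|_{L^a} \leq \|Q\|_{L^s}\,\|u\|_{L^{(p-1)b}}^{p-1}, \qquad \tfrac{1}{a}=\tfrac{1}{s}+\tfrac{1}{b},
$$
and the $L^a\to L^q$ mapping property of $\mathcal{R}_\lambda$ yields a Lipschitz estimate with small constant on a small ball $B_\rho \subset Y$, and the Banach Fixed Point Theorem produces $u_h$. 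The same contraction estimate gives $\|u_{h_1}-u_{h_2}\|_Y \lesssim \|h_1-h_2\|_{C^m}$, so the map $h\mapsto u_h$ is continuous, its image is a continuum, and a standard bootstrap delivers the $W^{2,r}$-version of the same statement. The delicate point, and in my view the main obstacle, is the matching of all these exponents, which is exactly what forces the algebraic condition~\eqref{eq:choice_of_ps} on $p$.

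The stated farfield expansion follows from two classical stationary phase asymptotics. For decaying $g$,
$$
  \mathcal{R}_\lambda(g)(x) = \lambda^{\frac{n-3}{4}}\sqrt{\tfrac{\pi}{2}}\,|x|^{\frac{1-n}{2}} e^{i(\frac{n-3}{4}\pi - \sqrt\lambda|x|)}\widehat{g}(-\sqrt\lambda\hat x) + o\bigl(|x|^{\frac{1-n}{2}}\bigr),
$$
and the Herglotz integral $\phi_h(x)$ admits the analogous two-point stationary phase expansion whose incoming/outgoing combination produces precisely the $i\cdot\tfrac{2\sqrt\lambda}{\pi} h(\sqrt\lambda\hat x)$ contribution in the formula for $u_h^\infty$. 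The ball-averaged limit in the theorem follows from quantitative control of the remainder in a Morrey-type average. Mutual distinctness is then read off directly: in $u_h^\infty$ the $h$-dependence is linear at leading order, while $\widehat{f(\cdot,u_h)}(-\sqrt\lambda\hat x)$ has size $\|h\|_{C^m}^{p-1}$ by~(A), so $h \mapsto u_h^\infty$ is injective for $\delta$ small.

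Under the stronger condition $p>\tfrac{(3n-1)s-2n}{(n-1)s}$ the nonlinearity $f(\cdot,u_h)$ lies in a small enough Lebesgue space that one can dispense with the abstract LAP $L^p$-bound and use instead the pointwise estimate $|\mathcal{R}_\lambda(x-y)| \lesssim (1+|x-y|)^{(1-n)/2}$ on the resolvent kernel; a weighted convolution argument then converts the $L^q$-control of $u_h$ into the claimed pointwise decay $|u_h(x)| \leq C_h(1+|x|)^{(1-n)/2}$.
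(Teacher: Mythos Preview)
Your strategy matches the paper's---Banach fixed point with Herglotz seed and the LAP resolvent, then bootstrap and read off the farfield via stationary phase---and you correctly identify the exponent matching as the source of condition~\eqref{eq:choice_of_ps}. But there is a genuine gap in the $L^\infty$ control during the iteration. You propose to contract in a space $Y\hookrightarrow L^q\cap L^\infty$ so that $|u|\leq 1$ makes assumption~(A) applicable; however the Limiting Absorption Principle only yields $\fR_\lambda:L^t\to L^q$ for \emph{finite} $q$, and there is no mechanism that puts $\fR_\lambda(f(\cdot,u))$ back in $L^\infty$ with a small norm to close the contraction. A weighted-$L^\infty$ choice of $Y$ would require exactly the pointwise decay that the theorem asserts only under the \emph{stronger} hypothesis on $p$, and the resolvent does not globally map into $W^{2,q}$. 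The paper's remedy is to truncate: it iterates
\[
  T(u,h)=\widehat{h\,d\sigma_\lambda}+\fR_\lambda\bigl(f(\cdot,\chi(u))\bigr)
\]
with a smooth cut-off $\chi$ satisfying $|\chi(z)|\leq\min\{|z|,1\}$ and $\chi(z)=z$ for $|z|\leq\tfrac12$. Then~(A) applies regardless of the size of $u$, and the contraction closes in $L^q$ alone for any $q$ in an explicit admissible set $\Xi_{s,p}$. Only \emph{after} the fixed point $u_h\in L^q$ is obtained does the paper invoke Moser iteration (Gilbarg--Trudinger, Theorem~8.17) to deduce $\|u_h\|_\infty\to 0$ as $\|h\|_{C^m}\to 0$, so that $\chi(u_h)=u_h$ and the truncation disappears a posteriori.

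A secondary remark: your injectivity argument via the farfield is more involved than needed. From the fixed-point equation, $u_{h_1}=u_{h_2}$ immediately gives $\widehat{h_1\,d\sigma_\lambda}=\widehat{h_2\,d\sigma_\lambda}$ (the nonlinear terms coincide and cancel), and then $h_1=h_2$ follows from Agmon's identity $\lim_{R\to\infty}R^{-1}\int_{B_R}|\widehat{g\,d\sigma_\lambda}|^2 = c\,\|g\|_{L^2(S^{n-1}_\lambda)}^2$, with no perturbative size comparison of the linear and nonlinear contributions to $u_h^\infty$ required.
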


  Let us discuss in which way this theorem improves earlier results. Most importantly,
  Theorem~\ref{thm:existence_via_BanachFPT} shows that nonlinear Helmholtz equations of the
  form~\eqref{eq:NLH} admit uncountably many solutions for a large class of nonlinearities which need not be
  odd, let alone of power-type. Its proof is short and elementary in the
  sense that it only uses the Contraction Mapping Theorem, elliptic regularity theory and
  mostly well-known results about the linear Helmholtz equation. Up to now such general nonlinearities have
  only been treated in the paper~\cite{EvWe_real} by Ev\'{e}quoz and Weth, but their additional requirement
  $(f_0)$ on p.361 requires the nonlinearity to be supported in a bounded subset of $\R^n$, which is quite
  restrictive. In our approach such an assumption is not necessary.  
  Given that applications often deal with power-type nonlinearities $f(x,z)=Q(x)|z|^{p-2}z$ let us
  comment on our improvements for this particular case in more detail. In the case $Q\in L^\infty(\R^n)$  we
  obtain solutions for all exponents $p>\frac{2(n^2+2n-1)}{n^2-1}$. This bound is smaller than $\frac{2(n+1)}{n-1}$
  so that our range of exponents is larger than in all other nonradial approaches except for~\cite{EvWe_real}
  where $Q$ has compact support and exponents $2<p<\frac{2n}{n-2}$ are allowed.  
  Additionally, we need not require $Q$ to be periodic nor evanescent (as
  in~\cite{Eveq_plane,EvWe_dual,MaMoPe_oscillating}) nor compactly supported (as
  in~\cite{EvWe_real,EvWe_branch}) and the growth rate of the nonlinearity may be supercritical (i.e. $p\geq
  \frac{2n}{n-2}$) which is an entirely new feature. The latter fact is worth mentioning given that
  Ev\'{e}quoz and Yesil~\cite{EvYe_DualGS} proved the nonexistence of dual ground states $u\in L^p(\R^n)$ for
  $n=3$ in the critical case $p=\frac{2n}{n-2}=6$ provided $f(x,u)=Q(x)u^5$ and $Q\in L^\infty(\R^3)$ is
  nonnegative and nontrivial. Since Theorem~\ref{thm:existence_via_BanachFPT} yields solutions belonging to
  $L^p(\R^3)$ we conclude that dual ground states need not exist while other nontrivial solutions do.  
  Finally we mention that in the physically most relevant case of a cubic nonlinearity $p=4$ we obtain
  uncountably many solutions whenever $n\geq 3,s\in [1,\infty]$ or $n=2,s\in [1,6)$.

\begin{bem} \label{Remark1} 
\begin{itemize}   
  \item[(a)] The decay rate $|x|^{\frac{1-n}{2}}$ is best possible. This is a consequence of
  Theorem~3 in~\cite{KoTa_Carleman} where nontrivial solutions of the elliptic PDE $-\Delta u -\lambda u = W(x)u$ in
  $\R^n$ with $W\in L^{\frac{n+1}{2}}(\R^n)$ and $\lambda>0$ are shown to satisfy
  $u(x)|x|^{-\frac{1}{2}-\eps}\notin L^2(\R^n)$ for all $\eps>0$. In particular,  better decay rates than
  $|x|^{\frac{1-n}{2}}$ as $|x|\to\infty$ are excluded.
  Notice that in the setting of Theorem~\ref{thm:existence_via_BanachFPT} the function $W(x):=f(x,u(x))/u(x)$
  satisfies  $W\in L^{\frac{n+1}{2}}(\R^n)$ because of $Q\in L^s(\R^n)$ and
  $$
    \qquad |W(x)|\leq Q(x)|u(x)|^{p-2},\quad
    u\in \bigcap_{r>\frac{2n}{n-1}} L^r(\R^n) ,\quad 
    p>\frac{2s(n^2+2n-1)-2n(n+1)}{(n^2-1)s},
  $$
  see~\eqref{eq:choice_of_ps}. It is remarkable that precisely this lower bound for $p$ appears in this
  context. Up to now existence and optimal decay results for nonlinear Helmholtz equations for lower exponents
  $p$ are only known in the radial setting \cite{MaMoPe_oscillating,EvWe_real}.
  Notice that for smaller $p$ the (nonradial) counterexample of Ionescu and Jerison from Theorem~2.5
  in~\cite{IonJer_absence} has to be taken into account: For any given $N\in\N$ there is $W\in L^q(\R^n)$
  with $q>\frac{n+1}{2}$ and a solution of $-\Delta u -\lambda u = W(x)u$ in $\R^n$ with $|u(x)|\leq
  (1+|x|)^{-N}$ for all $x\in\R^n$.
 \item[(b)] Theorem~\ref{thm:existence_via_BanachFPT} yields a symmetry-breaking result: 
 For any subgroup $\Gamma\subset O(n)$ such that
 $\Gamma\neq \{\id\}$ and any $\Gamma$-invariant nonlinearity $f$ satisfying (A) one has uncountably many
 solutions that are not $\Gamma$-invariant. In particular, for $\Gamma=O(n)$, radial nonlinearities allow for
 nonradial solutions. We will prove this in Remark~\ref{Remark2}(a).  
  \item[(c)] In order to construct radial solutions, we can get the same conclusions as in 
  Theorem~\ref{thm:existence_via_BanachFPT} under weaker assumptions on $p$. 
  This is due to an improved version of the Limiting Absorption Principle for the Helmholtz operator. In
  Remark~\ref{Remark2}(b) we comment on the necessary modifications of the proof and show that the admissible
  range of exponents for the existence of radial solutions is no longer given by \eqref{eq:choice_of_ps}, but
  \begin{equation}\label{eq:choicep_radial}
     p > \max\Big\{ 2,  \frac{s(2n^2+n-1)-2n^2}{sn(n-1)} \Big\}. 
  \end{equation}
  Notice that the resulting radial version of Theorem~\ref{thm:existence_via_BanachFPT} is not covered by
  earlier contributions from~\cite{MaMoPe_oscillating} (Theorem~1.2, Theorem~2.10) or~\cite{EvWe_real} (Theorem~4). For
  instance, it provides solutions of the radial nonlinear Helmholtz equation 
  $-\Delta u - u = Q(x)|u|^{p-2}u$ for any $Q\in L^s_{rad}(\R^n)\cap L^\infty_{rad}(\R^n)$ and $p$ as
  in~\eqref{eq:choicep_radial} whereas in the above-mentioned papers $Q$ has to be bounded,
  differentiable and radially decreasing. On the other hand, our restrictions on the exponent $p$ do not
  appear in \cite{MaMoPe_oscillating,EvWe_real} (where all $p>2$ are allowed) so
  that~\eqref{eq:choicep_radial} might be improved further.
 \end{itemize}
\end{bem}

  Next we discuss variants of these results for related semilinear elliptic PDEs from mathematical
  physics. First let us mention that a nonlinearity $f(\cdot,u)$
  satisfying (A) may be without any major difficulty be replaced by a nonlocal right hand side such
  as $K\ast f(\cdot,u)$ where $K\in L^1(\R^n)$. Clearly, imposing more assumptions
  $K$ may even lead to larger ranges of exponents than~\eqref{eq:choice_of_ps}. In this way it is possible to
  obtain small solutions of nonlocal Helmholtz equations. Similarly, one may ask how our results are affected
  by changes in the linear operator. For instance, if the Helmholtz operator is perturbed to a periodic
  Schr\"odinger operator $-\Delta+V(x)-\lambda$ then it should be possible to adapt the proof in such a way
  that it provides small solutions of $-\Delta u +V(x)u -\lambda u = f(x,u)$ in $\R^n$ provided $\lambda$
  belongs to the essential spectrum of $-\Delta+V(x)$ and the band structure of this periodic Schr\"odinger
  operator is sufficiently nice. To be more precise, one would require (A1),(A2),(A3) from~\cite{Man_LAPperiodic} to
  hold so that Herglotz-type waves, defined as suitable oscillatory integrals over the so-called Fermi
  surfaces associated with $-\Delta+ V(x)$, exist and have the properties stated in
  Proposition~\ref{prop:Herglotz_waves} below. Since the technicalities (including
  a Limiting Absorption Principle for such operators) are quite involved and mostly carried out in
  \cite{Man_LAPperiodic}, we prefer not to discuss this issue further.

  \medskip
  
  We now turn our attention to a fourth order version of~\eqref{eq:NLH}  given by
  \begin{equation}\label{eq:4thorderNLH}
    \Delta^2 u - \beta\Delta u + \alpha u = f(x,u) \quad\text{in }\R^n,
  \end{equation}
  which we will briefly discuss for $\alpha,\beta$ satisfying  
  \begin{equation}\label{eq:case_distinction}
    \text{(i)}\;\; \alpha<0,\beta\in\R\qquad\quad\text{or}\qquad\quad 
    \text{(ii)}\;\; \alpha>0,\beta<-2\sqrt\alpha.
  \end{equation}
  Under these assumptions dual variational methods were employed in~\cite{BoCaMa_4thorder} to prove the
  existence of one nontrivial solution when $f(x,z)=Q(x)|z|^{p-2}z$ where
  $\frac{2(n+1)}{n-1}<p<\frac{2n}{n-2}$ and $Q$ is positive and $\Z^n$-periodic.
  Notice that in the case $\beta^2-4\alpha<0$ classical variational methods such as
  constrained minimization apply and a number of papers revealed the existence of positive and sign-changing
  solutions $u\in H^4(\R^n)$ of~\eqref{eq:4thorderNLH} again for power-type nonlinearities. We refer
  to \cite{BoCadoSNa_orbitally,BoNa_waveguide,BoCaGouJe_normalized} for
  results in this direction. Our intention is to show that in the case (i) or (ii) uncountably many
  solutions of~\eqref{eq:4thorderNLH} exist for all nonlinearities $f$ satisfying (A).
  The main observation is that in case (i) or (ii) there are analoga of the Herglotz waves given by densities
  $h\in Y^\delta$ where
  \begin{align} \label{eq:Ydelta}
    \begin{aligned}
    \text{In case (i)\,:}\quad Y^\delta &:= X_\lambda^\delta,  \hspace{1.9cm}\text{where}\qquad 
    \lambda = \frac{-\beta+\sqrt{\beta^2-4\alpha}}{2}>0,\\
    \text{In case (ii):}\quad Y^\delta &:= X_{\lambda_1}^\delta \times X_{\lambda_2}^\delta \qquad 
    \text{where}\quad
    \lambda_{1,2} = \frac{-\beta\pm \sqrt{\beta^2-4\alpha}}{2}>0. 
  \end{aligned}
  \end{align}
  The fixed point approach used in the proof of Theorem~\ref{thm:existence_via_BanachFPT} may be rather
  easily adapted to~\eqref{eq:4thorderNLH} and we can prove the following result. 
   
  \begin{thm} \label{thm:4thorder}
  Assume (A) and (i) or (ii). Then there is $\delta>0$ and mutually different solutions $(u_h)_{h\in
  Y^\delta}$ of~\eqref{eq:4thorderNLH} that form a $W^{4,r}(\R^n)$-continuum satisfying
  $\|u_h\|_{W^{4,r}(\R^n)}\to 0$ as $\|h\|_{C^m}\to 0$ for any given $r\in (\frac{2n}{n-1},\infty)$.
  If additionally $p>\frac{(3n-1)s-2n}{(n-1)s}$ holds, then $|u_h(x)|\leq C_h(1+|x|)^{\frac{1-n}{2}}$ for all
  $x\in\R^n$.
  \end{thm}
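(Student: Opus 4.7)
The plan is to adapt the contraction-mapping argument behind Theorem \ref{thm:existence_via_BanachFPT} after factoring the fourth-order operator into Helmholtz-type factors. Setting $t=-\Delta$, the symbol $t^2+\beta t+\alpha$ has roots $\lambda_{1,2}=(-\beta\pm\sqrt{\beta^2-4\alpha})/2$, so that in case (i) one has
\[
\Delta^2-\beta\Delta+\alpha=(-\Delta-\lambda)\circ(-\Delta+|\mu|),\qquad \lambda=\lambda_1>0,\ \mu=\lambda_2<0,
\]
while in case (ii) both roots $\lambda_1,\lambda_2>0$ are positive and the operator factors into two genuine Helmholtz operators. The positive spectral values $\lambda_j$ give rise to the Herglotz-wave families encoded in the definition \eqref{eq:Ydelta} of $Y^\delta$; the factor $-\Delta+|\mu|$ in case (i) contributes no free waves since it is invertible with an explicit Bessel-potential kernel.

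Based on this I define the solution operator of the linear fourth-order problem by
\[
\mathcal{R}:=(-\Delta+|\mu|)^{-1}\circ\mathcal{R}_\lambda\quad\text{in case (i),}\qquad
\mathcal{R}:=\frac{1}{\lambda_1-\lambda_2}\bigl(\mathcal{R}_{\lambda_1}-\mathcal{R}_{\lambda_2}\bigr)\quad\text{in case (ii),}
\]
where $\mathcal{R}_\lambda$ is the $L^p$-resolvent obtained from the Limiting Absorption Principle used in the proof of Theorem \ref{thm:existence_via_BanachFPT}, and the partial-fraction identity handles case (ii). For $h\in Y^\delta$ I set
\[
T_h(u):=\mathcal{R}(f(\cdot,u))+\Phi_h,\qquad \Phi_h:=\begin{cases}\widehat{h\,d\sigma_\lambda}&\text{in case (i),}\\ \widehat{h_1\,d\sigma_{\lambda_1}}+\widehat{h_2\,d\sigma_{\lambda_2}}&\text{in case (ii),}\end{cases}
\]
so that any fixed point of $T_h$ solves \eqref{eq:4thorderNLH}. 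The function $\Phi_h$ plays, for the fourth-order equation, the role of the Herglotz wave in Theorem \ref{thm:existence_via_BanachFPT}: by Proposition \ref{prop:Herglotz_waves} it lies in $L^r(\R^n)$ for all $r>2n/(n-1)$, has norm controlled by $\|h\|_{C^m}$, and is annihilated by the linearised fourth-order operator.

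A Banach fixed-point argument on a small closed ball of $W^{4,r}(\R^n)$ then runs exactly as in the Helmholtz case: the nonlinear estimate on $\mathcal{R}(f(\cdot,u_1))-\mathcal{R}(f(\cdot,u_2))$ is controlled by (A) combined with the $L^p$-mapping properties of $\mathcal{R}$, which inherit those of $\mathcal{R}_{\lambda_j}$ (case (ii), two summands handled separately) or even improve them by two derivatives and additional decay via the smoothing factor $(-\Delta+|\mu|)^{-1}$ (case (i)). Distinctness of the family $(u_h)_{h\in Y^\delta}$ and continuity $\|u_h\|_{W^{4,r}}\to 0$ as $\|h\|_{C^m}\to 0$ follow as before by reading off the asymptotic far-field pattern(s) of the oscillatory part of $u_h$ at infinity, which determine $h$ uniquely through Proposition \ref{prop:Herglotz_waves}.

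The main obstacle I expect is the careful bookkeeping of the mapping properties of $\mathcal{R}$: one must check that picking up two extra derivatives to land in $W^{4,r}$ is really free in both cases, and that no cancellation in the partial-fraction difference in case (ii) degrades the $L^p$-estimate (each summand is bounded individually, so this should only require triangle inequality). The pointwise bound $|u_h(x)|\leq C_h(1+|x|)^{(1-n)/2}$ under the additional assumption $p>((3n-1)s-2n)/((n-1)s)$ is obtained, as in Theorem \ref{thm:existence_via_BanachFPT}, by inserting the $L^q$-information on $u_h$ into the identity $u_h=\mathcal{R}(f(\cdot,u_h))+\Phi_h$ and invoking the pointwise kernel bounds of the Helmholtz resolvent together with the decay of Herglotz waves from Proposition \ref{prop:Herglotz_waves}; in case (i) the extra factor $(-\Delta+|\mu|)^{-1}$ only improves the decay and so causes no difficulty.
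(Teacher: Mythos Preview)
Your overall strategy is correct and is essentially the paper's own approach: the paper invokes the resolvent $\mathbf{R}$ for $\Delta^2-\beta\Delta+\alpha$ as a black box from Theorem~\ref{thm:4thorder_resolvent} (which is Theorem~3.3 of \cite{BoCaMa_4thorder}), and your factorisation/partial-fractions formula is precisely how that operator is constructed there. The Herglotz-wave part $\Phi_h$ is handled identically in both cases. So at the level of ideas there is no discrepancy.

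There are, however, two technical gaps in your write-up. First, you omit the truncation $\chi$: assumption (A) only controls $f(x,z)$ for $|z|\leq 1$, so the map $u\mapsto \mathcal{R}(f(\cdot,u))$ is not even well defined on all of $L^q$ (or $W^{4,r}$). The paper's fixed-point map is $T(u,h)=\Phi_h+\mathbf{R}(f(\cdot,\chi(u)))$, and only \emph{after} obtaining the fixed point and proving $\|u_h\|_\infty\to 0$ does one remove $\chi$. Second, the contraction should be run on a small ball in $L^q(\R^n)$ for some $q\in\Xi_{s,p}$, \emph{not} directly in $W^{4,r}(\R^n)$. The $L^t\to L^q$ mapping properties of your $\mathcal{R}$ (inherited from $\fR_{\lambda_j}$) are exactly what make Proposition~\ref{prop:estimates_welldefinedness} go through; there is no corresponding direct $L^t\to W^{4,r}$ estimate available to close a contraction in $W^{4,r}$. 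The $W^{4,r}$-regularity and continuum property are obtained afterwards by the bootstrap of Steps~2--4 in the proof of Theorem~\ref{thm:existence_via_BanachFPT} (with Bessel-potential estimates replaced by their fourth-order analogue; cf.\ Section~5 of \cite{BoCaMa_4thorder}). Once you insert $\chi$ and move the fixed-point argument to $L^q$, the rest of your outline---including the pointwise decay under the extra hypothesis on $p$---goes through exactly as you describe.
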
 

  As in Theorem~\ref{thm:existence_via_BanachFPT} one can say more about the asymptotics  of
  the constructed solutions; we refer to Section~5.2 in~\cite{BoCaMa_4thorder} for a related discussion.
  Further generalizations to more general higher order semilinear elliptic problems of the form $Lu = f(x,u)$
  in $\R^n$ are possible provided   the linear differential operator with constant coefficients $L$ has
  a Fourier symbol $P(\xi)$ with the property that $\{\xi\in\R^n: P(\xi)=0\}$ is a compact manifold
  with nonvanishing Gaussian curvature. Notice that this assumption makes the method of stationary phase work
  and provides pointwise decay of oscillatory integrals as demonstrated in the proof of
  Proposition~\ref{prop:Herglotz_waves}. Moreover, one needs a Limiting
  Absorption Principle in order to make sense of the Fourier multiplier $1/P(\xi)$ as a mapping between
  Lebesgue spaces. At least in the case $P(\xi)=P_0(\xi)(|\xi|^2-\lambda_1)\cdot\ldots
  \cdot (|\xi|^2-\lambda_k)$ with $0<\lambda_1<\ldots<\lambda_k$ and $P_0$ positive this can be established as
  in Theorem~3.3 in~\cite{BoCaMa_4thorder}. With these tools our fixed point approach can be adapted to find nontrivial
  solutions of $Lu=f(x,u)$ in $\R^n$.

  \medskip
  
  Finally, we discuss nonlinear curl-curl equations of the form
  \begin{equation} \label{eq:NLM}
    \nabla \times \nabla \times E - \lambda E =   f(x,E) \quad\text{in }\R^3
  \end{equation}
  that describe the electric field $E:\R^3\to\R^3$ of an electromagnetic wave in a nonlinear medium.  
  This equation has been studied in the past years on bounded
  domains in $\R^3$ \cite{BaMe_NonlinearTimeharmonic,BaMe_NonlinearTimeharmonicAnisotropic} but also on the entire space
  $\R^3$, which is the situation we focus on. Up to our knowledge there is only one result    
  for solutions of nonlinear curl-curl equations on $\R^3$ without symmetry assumption.
  In \cite{Med_GroundStates} Mederski proves the existence of a weak solution of \eqref{eq:NLM} by variational methods
  when $\lambda$ is replaced by a small nonnegative potential $V(x)$ that decays suitably
  fast to zero at infinity, see assumption (V) in \cite{Med_GroundStates}. In particular, constant functions
  $V(x)=\lambda$ can not be treated by this method so that our setting must be considered as entirely
  different from the one in~\cite{Med_GroundStates}.
  
  \medskip
  
  In the cylindrically symmetric setting the existence of solutions can be proved using various approaches.
  Here, the electrical field is assumed to be of the form 
  \begin{equation}\label{eq:cylindricallysym}
    E(x_1,x_2,x_3) = \frac{E_0(\sqrt{x_1^2+x_2^2},x_3)}{\sqrt{x_1^2+x_2^2}} \vecIII{-x_2}{x_1}{0}
    \qquad\text{where }E_0:[0,\infty)\times\R\to\R. 
  \end{equation}
  Such functions are divergence-free so that $\nabla\times\nabla\times E = -\Delta E$ implies that one
  actually has to deal with the elliptic $3\times 3$-system
  \begin{equation} \label{eq:NLM_cylindrical}
    - \Delta E - \lambda E = f(x,E) \quad\text{in }\R^3,
  \end{equation}
  which may equally be expressed in terms of $E_0$ provided the nonlinearity $f(x,E)$ is compatible with this
  symmetry assumption, see page 3 in~\cite{BaDoPlRe_GroundStates}. In this special case
  further results \cite{HirRei_Existence,BaDoPlRe_GroundStates,Zeng_CylindricallySym} are known but none of
  those applies in the case $\lambda>0$ and $f(x,E)=\pm |E|^{p-2}E$ that we are mainly interested in.
  
  \medskip
   
  Given our earlier results for the nonlinear Helmholtz equation~\eqref{eq:NLH} it is not surprising that we
  obtain an existence result for \eqref{eq:NLM_cylindrical} that is entirely analogous to the
  one from Theorem~\ref{thm:existence_via_BanachFPT}. Since this result fills a gap in the literature, we
  state it in part (i) of our theorem  even though its proof is a straightforward
  adaptation of the fixed point approach used in the proof of Theorem~\ref{thm:existence_via_BanachFPT}. The
  corresponding assumption on the nonlinearity is the following.
  \begin{itemize}
    \item[(A')]   $f:\R^3\times\R^3\to \R^3,(x,E)\mapsto f_0(\sqrt{x_1^2+x_2^2},x_3,|E|^2)E$ is a
    Carath\'{e}odory function satisfying~\eqref{eq:conditions_f}  for some $Q\in  L^s(\R^3)\cap L^\infty(\R^3)$.
  \end{itemize}
  Assumption (A') ensures that $f$ is compatible with cylindrical symmetry. Indeed, for $E$ as
  in~\eqref{eq:cylindricallysym} one can check that $f(\cdot,E)$ is of the form~\eqref{eq:cylindricallysym},
  too.
  
  \medskip
  
  In the general non-symmetric case the construction of solutions is more difficult since the curl-curl
  operator satisfies a much weaker Limiting Absorption Principle as in the cylindrically symmetric
  setting, cf. Theorem~\ref{thm:LAP_NLM}. Moreover, well-known regularity results for elliptic problems are
  not available so that we have to consider a substantially smaller class of nonlinearities satisfying the
  following:
\begin{itemize}
  \item[(B)]   $f:\R^3\times\R^3\to \R^3$ is a Carath\'{e}odory function satisfying for some 
  $Q\in L^s(\R^3)\cap L^\infty(\R^3)$  the estimate 
  \begin{align}\label{eq:conditions_f_NLM}
    \begin{aligned}
    |f(x,E)| &\leq  Q(x)|E|^{p-1}(1+|E|)^{\tilde p-p} &&(x,E\in\R^3), \\
    |f(x,E_1)-f(x,E_2)| &\leq Q(x)(|E_1|+|E_2|)^{p-2}(1+|E_1|+|E_2|)^{\tilde p-p}|E_1-E_2| 
    &&(x,E_1,E_2\in\R^3)
    \end{aligned}   
  \end{align}
  where $1\leq s\leq 2$ and $\tilde p\leq 2\leq p<\infty$.  
\end{itemize}
  Additionally, we will have to require that $\|Q\|_s+\|Q\|_\infty$ is small enough
  in order to obtain solutions of~\eqref{eq:NLM}. In the cylindrically symmetric respectively
  non-symmetric setting the counterparts of the Herglotz waves (introduced in Section~\ref{sec:ResLAP}) are parametrized by functions
  $h\in Z^\delta_{cyl}$ respectively $h\in Z$ where
  \begin{align*}
    Z 
    &:= \Big\{ h\in C^2(S_\lambda^2;\C^3): h(\xi)=\ov{h(-\xi)},\,\skp{h(\xi)}{\xi}=0\; \forall\xi\in
    S_\lambda^2 \Big\}, \\
    Z^{\delta}_{cyl} 
    &:= \Big\{ h\in Z: \|h\|_{C^2} <\delta \text{ and } \Real(h),\Imag(h) \text{ satisfy
    \eqref{eq:cylindricallysym}}\Big\}.
  \end{align*}
  Notice that both sets are nonempty. With these definitions we can formulate our main result for
  the nonlinear curl-curl equation~\eqref{eq:NLM}.

\begin{thm} \label{thm:NLM} ~
  \begin{itemize}
    \item[(i)] Assume (A') and  $\lambda>0$.  Then there is $\delta>0$ and  a family $(E_h)_{h\in
    Z_{cyl}^\delta}$ of mutually different cylindrically symmetric solutions of~\eqref{eq:NLM} that form a
    $W^{2,r}(\R^3;\R^3)$-continuum and satisfy $\|E_h\|_{W^{2,r}(\R^3;\R^3)}\to 0$ as $\|h\|_{C^2}\to 0$ for
    any given $r\in (3,\infty)$. If additionally $p>\frac{4s-3}{s}$ holds, then
    $|E_h(x)|\leq C_h(1+|x|)^{-1}$.
    \item[(ii)] Assume (B) and $\lambda>0,3<q<\frac{3s}{(2s-3)_+}$. If  $\|Q\|_s+\|Q\|_\infty$ is sufficiently
    small then there is a family $(E_h)_{h\in Z}$ of mutually different weak solutions of~\eqref{eq:NLM} lying in
    $H_{loc}(\curl;\R^3)\cap L^q(\R^3;\R^3)$. Moreover:
    \begin{itemize}
      \item[(a)] If additionally $(p,s)\neq (2,2)$ holds, then $E_h\in L^r(\R^3;\R^3)$ for all $r\in (3,q)$.
      \item[(b)] If additionally $\tilde p<2<p$ holds, then $E_h\in L^r(\R^3;\R^3)$ for all $r\in
      (q,\frac{3s(p-1)}{(2s-3)_+})$.
    \end{itemize}   
  \end{itemize}
\end{thm}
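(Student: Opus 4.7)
Part (i) is a vector-valued adaptation of Theorem~\ref{thm:existence_via_BanachFPT}. For divergence-free cylindrically symmetric vector fields of the form~(\ref{eq:cylindricallysym}) one has $\nabla\times\nabla\times E=-\Delta E$, so (\ref{eq:NLM}) collapses to the Helmholtz system $-\Delta E-\lambda E=f(x,E)$, and assumption (A') ensures that $f(\cdot,E)$ inherits the cylindrical symmetry of $E$. The plan is to set up the fixed point equation $E=\fR_\lambda(f(\cdot,E))+\widehat{h\,d\sigma_\lambda}$ with the Helmholtz resolvent $\fR_\lambda$ coming from the scalar Limiting Absorption Principle, applied componentwise, and with the Herglotz wave parametrized by $h\in Z_{cyl}^\delta$, which is itself cylindrically symmetric by construction. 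Banach's fixed point theorem on a small ball of $W^{2,r}(\R^3;\R^3)$ yields the continuum, and the $|x|^{-1}$ decay under the extra condition $p>(4s-3)/s$ is obtained by exactly the weighted refinement used in Theorem~\ref{thm:existence_via_BanachFPT}.

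For part~(ii) the curl-curl operator has the whole space of gradients in its kernel and does not satisfy elliptic regularity; one therefore has to work in $L^q(\R^3;\R^3)$ instead of a Sobolev space. The fixed point equation I would use is
\begin{equation*}
    E = \fR_\lambda^{\curl}\bigl(f(\cdot,E)\bigr) + \Phi_\lambda(h),
\end{equation*}
where $\fR_\lambda^{\curl}$ is the resolvent provided by the curl-curl Limiting Absorption Principle (Theorem~\ref{thm:LAP_NLM}) and $\Phi_\lambda(h)$ is the curl-curl Herglotz wave built from $h\in Z$. The tangentiality condition $\skp{h(\xi)}{\xi}=0$ on $S_\lambda^2$ forces $\Phi_\lambda(h)$ to be divergence-free, hence to solve the homogeneous equation $\nabla\times\nabla\times\Phi_\lambda(h)=\lambda\Phi_\lambda(h)$. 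The window $3<q<3s/(2s-3)_+$ is precisely the range in which $\fR_\lambda^{\curl}:L^{q'}\to L^q$ is bounded by Theorem~\ref{thm:LAP_NLM}, and a Hölder interpolation based on~(\ref{eq:conditions_f_NLM}) together with $Q\in L^s\cap L^\infty$ yields an estimate of the form $\|f(\cdot,E)\|_{q'}\lesssim (\|Q\|_s+\|Q\|_\infty)(\|E\|_q+\|E\|_q^{p-1})$; the splitting into a short-range $|E|^{p-1}$ part handled by $L^s$ and a long-range $|E|^{\tilde p-1}$ part handled by $L^\infty$ is exactly what the two-sided growth condition~(\ref{eq:conditions_f_NLM}) is designed for. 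For $\|Q\|_s+\|Q\|_\infty$ small enough the right-hand side becomes a contraction on a small ball of $L^q(\R^3;\R^3)$ and produces the family $(E_h)_{h\in Z}$ of weak solutions in $H_{loc}(\curl;\R^3)\cap L^q(\R^3;\R^3)$.

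The improved integrability in (a) and (b) follows by a bootstrap: knowing $E_h\in L^q$, one plugs it back into the fixed point equation and uses further $L^{r'}\to L^r$ bounds for $\fR_\lambda^{\curl}$ coming again from Theorem~\ref{thm:LAP_NLM}, combined with~(\ref{eq:conditions_f_NLM}). The range $r\in(3,q)$ in (a) corresponds to iterating towards smaller exponents, while $r\in(q,3s(p-1)/(2s-3)_+)$ in (b) is obtained by iterating towards larger ones; the exclusions $(p,s)\neq(2,2)$ and $\tilde p<2<p$ respectively rule out the borderline interpolation cases in which these estimates degenerate.

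The main obstacle is concentrated in~(ii). Because $\nabla\times\nabla\times$ is not elliptic, one is restricted to the relatively narrow range of $L^p$-mapping properties supplied by the curl-curl LAP, and, in contrast to the scalar Helmholtz setting, the linear-in-$E$ contribution of $f$ cannot be absorbed by choosing $\delta$ small, which is precisely why a smallness hypothesis on $\|Q\|_s+\|Q\|_\infty$ must be imposed. Ensuring at the same time that $\Phi_\lambda(h)$ lies in the correct Lebesgue class and that the fixed point $E_h$ genuinely solves~(\ref{eq:NLM}) rather than a divergence-cleaned variant of it is the delicate point that has to be dealt with through the tangentiality of $h$ and the precise form of $\fR_\lambda^{\curl}$ in Theorem~\ref{thm:LAP_NLM}.
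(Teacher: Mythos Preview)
Your sketch for part~(i) is essentially the paper's argument; the only cosmetic difference is that the paper runs the contraction in $L^q$ with a truncation $\chi$ and then bootstraps to $W^{2,r}$, rather than working directly in the Sobolev space.

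For part~(ii) there are two genuine gaps.

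First, the curl-curl resolvent from Theorem~\ref{thm:LAP_NLM} is \emph{not} a map $L^{q'}\to L^q$ but rather $L^t\cap L^q\to L^q$, with the bound $\|R_\lambda G\|_q\leq C(\|G\|_q+\|G\|_t)$. This reflects the Helmholtz decomposition $G=G_1+G_2$: the gradient part $G_1$ is simply multiplied by $-1/\lambda$ and therefore requires an $L^q$ estimate, while only the divergence-free part $G_2$ goes through the scalar Helmholtz resolvent and needs an $L^t$ estimate. Hence you must control $f(\cdot,E)$ in \emph{both} $L^t$ and $L^q$. The paper does this via $\|f(\cdot,E)\|_q\leq \alpha_{p,\tilde p}\|Q\|_\infty\|E\|_q$ and $\|f(\cdot,E)\|_t\leq \alpha_{p,\tilde p}\|Q\|_{\tilde s}\|E\|_q$ with $\tfrac{1}{t}=\tfrac{1}{\tilde s}+\tfrac{1}{q}$, $\tilde s=\max\{s,3/2\}$; the window $q<\tfrac{3s}{(2s-3)_+}$ is precisely what forces this $t$ to satisfy the admissibility condition $t<\tfrac{3}{2}$, not a condition on an $L^{q'}\to L^q$ bound. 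The same two-norm structure has to be carried through the bootstrap in~(a) and~(b), so your ``$L^{r'}\to L^r$'' iteration scheme would not go through as written.

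Second, the estimate $\|f(\cdot,E)\|\lesssim (\|Q\|_s+\|Q\|_\infty)(\|E\|_q+\|E\|_q^{p-1})$ and the phrase ``contraction on a small ball'' miss the point of assumption~(B). The two-sided growth $\tilde p\leq 2\leq p$ yields the uniform bound $|z|^{p-2}(1+|z|)^{\tilde p-p}\leq \alpha_{p,\tilde p}<\infty$, hence $|f(x,E_1)-f(x,E_2)|\leq \alpha_{p,\tilde p}\,Q(x)\,|E_1-E_2|$ for \emph{all} $E_1,E_2$. Combined with the two-norm resolvent bound this makes $T(\cdot,h)$ a contraction on the whole of $L^q(\R^3;\R^3)$ with Lipschitz constant $C\alpha_{p,\tilde p}(\|Q\|_s+\|Q\|_\infty)$, independent of any ball radius. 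That global contraction is exactly what allows $h$ to range over all of $Z$ with no smallness restriction, as the theorem asserts; a small-ball argument driven by a superlinear term $\|E\|_q^{p-1}$ would only yield solutions for small $h$, i.e.\ a weaker statement than part~(ii).
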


As an application we obtain uncountably many distinct weak solutions of the curl-curl equation
\eqref{eq:NLM} with saturated nonlinearities of the form 
$$
  f(x,E)=  \frac{\delta|E|^2 \Gamma(x)E}{1+P(x)|E|^2} 
$$
where $\inf P>0$, $\Gamma\in L^s(\R^3;\R^{3\times 3})\cap L^\infty(\R^3;\R^{3\times3})$  
and $\delta>0$ is sufficiently small. 

\medskip

The paper is organized as follows. In Section~\ref{sec:ResLAP} we review the Limiting
Absorption Principles  and properties of Herglotz-type waves that we will need for the proofs of our
results. In  Section~\ref{sec:ProofThm1}, Section~\ref{sec:ProofThm2},
Section~\ref{sec:ProofThm3} we then prove Theorem~\ref{thm:existence_via_BanachFPT},
Theorem~\ref{thm:4thorder} and Theorem~\ref{thm:NLM}. Since the proofs of
Theorem~\ref{thm:existence_via_BanachFPT}, Theorem~\ref{thm:4thorder} and Theorem~\ref{thm:NLM}~(i) are 
almost identical, we carry out the first in detail and only comment on the modifications when it comes to
the latter. Finally, in Appendix A we review the method of stationary phase and prove
Proposition~\ref{prop:LAP_asymptotics}. In Appendix~B we  prove our Limiting Absorption Principle for the
curl-curl operator (Theorem~\ref{thm:LAP_NLM}). In Appendix C we review some resolvent estimates for the Helmholtz operator due
to Ruiz and Vega.

\medskip

 In the following $C$ will denote a generic constant that can change from line to line and $\frac{1}{r_+}$
 stands for $\frac{1}{r}$ if $r>0$ and for $\infty$ if $r\leq 0$.
 The symbol $\mathcal F f=\hat f$ represents the Fourier transform of (the tempered distribution)
 $f\in L^q(\R^n)$ and $\mathcal F_1,\mathcal F_{n-1}$ are the Fourier transforms in $\R^1,\R^{n-1}$, respectively. For $R>0$ the symbol
 $B_R$ denotes the open ball of radius $R$ around the origin in $\R^n$ and
 $\skp{\cdot}{\cdot}$ is the inner product in $\R^n$ extended by bilinearity to $\C^n$.
 $L^q(\R^n),L^{q,w}(\R^n)$ denote the classical respectively weak Lebesgue spaces on $\R^n$
 equipped with the standard norms $\|\cdot\|_q,\|\cdot\|_{q,w}$.


\section{Herglotz waves and Limiting absorption principles} \label{sec:ResLAP}
  
  In this section we review some partly well-known results on Herglotz waves and Limiting Absoprtion
  Principles for the linear differential operators we are interested in. A classical Herglotz wave associated with the
  Helmholtz operator $-\Delta-\lambda$ is defined via the formula
  $$
    \mathcal F(h\,d\sigma_\lambda)(x)
    := \widehat{h\,d\sigma_\lambda}(x) 
    := \frac{1}{(2\pi)^{n/2}} \int_{S_\lambda^{n-1}} h(\xi) e^{-i\skp{x}{\xi}}\,d\sigma_\lambda(\xi) 
  $$
  where $h\in L^2(S_\lambda^{n-1};\C)$ and $\sigma_\lambda$ denotes the canonical surface measure of 
  $S_\lambda^{n-1}=\{\xi\in\R^n : |\xi|^2 = \lambda\}$.
   Herglotz waves are analytic functions that solve the linear Helmholtz equation
 $-\Delta\phi-\lambda\phi=0$.  Their pointwise decay properties are well-understood for smooth densities $h$
 and result from an application of the method of stationary phase. Unfortunately, we could not find a
 quantitative version  of this result telling how smooth the density $h$ needs to be in order to ensure that
 $\widehat{h\,d\sigma_\lambda}(x)$ decays like $|x|^{\frac{1-n}{2}}$  as
 $|x|\to\infty$ in the pointwise sense. In our first auxiliary result we provide such an estimate and its
 proof will be given in  Appendix A. For notational convenience we introduce the quantity
 \begin{equation}\label{eq:def_mh}
   m_h(x):=  e^{i(\frac{n-1}{4}\pi-\sqrt\lambda |x|)}h(\sqrt\lambda \hat x)
      + e^{-i(\frac{n-1}{4}\pi-\sqrt\lambda |x|)} 
       h(-\sqrt\lambda \hat x) 
 \end{equation}
 so that our claim is the following.
  
 \begin{prop} \label{prop:Herglotz_waves}
   Let $n\in\N,n\geq 2$ and $m:= \lfloor \frac{n-1}{2}\rfloor+1$. Then for all $h\in
   C^m(S_\lambda^{n-1};\C)$ the Herglotz wave $\widehat{h\,d\sigma_\lambda}$ is an analytic solution of
   $-\Delta\phi-\lambda\phi=0$ in $\R^n$ and satisfies the estimate $|(\widehat{h\,d\sigma_\lambda)}(x)|\leq
   C\|h\|_{C^m}(1+|x|)^{\frac{1-n}{2}}$ as well as 
   \begin{equation*}
     \lim_{R\to\infty} \frac{1}{R} \int_{B_R} \left| \widehat{h\,d\sigma_\lambda}(x) - 
      \frac{1}{\sqrt{2\pi}} \left(\frac{\sqrt\lambda}{|x|}\right)^{\frac{n-1}{2}}      
      m_h(x)  \right|^2\,dx
       = 0.
   \end{equation*}
   In particular, we have $\|\widehat{h\,d\sigma_\lambda}\|_r\leq C_r \|h\|_{C^m}$ for all $r>\frac{2n}{n-1}$. 
 \end{prop}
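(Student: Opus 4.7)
\textbf{Proof plan for Proposition \ref{prop:Herglotz_waves}.} The analyticity and the identity $(-\Delta-\lambda)\widehat{h\,d\sigma_\lambda}=0$ are essentially free: since $S^{n-1}_\lambda$ is compact, one may differentiate under the integral sign arbitrarily often in $x$, and the Laplacian brings down the factor $|\xi|^2-\lambda$, which vanishes on $S^{n-1}_\lambda$. All the work lies in the pointwise/$L^2$-averaged decay, which I would obtain from stationary phase analysis applied to the oscillatory integral $\int_{S^{n-1}_\lambda} h(\xi) e^{-i\langle x,\xi\rangle}\,d\sigma_\lambda(\xi)$. Writing $R=|x|$ and $\omega=\hat x$, the phase $\xi\mapsto -\langle\omega,\xi\rangle$ has exactly two critical points on the sphere, namely $\xi_\pm=\pm\sqrt\lambda\,\omega$, and these are non-degenerate because $S^{n-1}_\lambda$ has nonvanishing Gaussian curvature.

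My first step would be to choose a smooth partition of unity $\chi_+ + \chi_- + \chi_0 \equiv 1$ on $S^{n-1}_\lambda$ adapted to $x$, with $\chi_\pm$ supported in small geodesic balls around $\xi_\pm$ and $\chi_0$ supported where $|\nabla_{\tan}\langle\omega,\xi\rangle|\geq c>0$. On the support of $\chi_0$ I would integrate by parts $m$ times using the tangential operator $L=\frac{i\,\nabla_{\tan}\phi}{R|\nabla_{\tan}\phi|^2}\cdot\nabla_{\tan}$, which gains a factor $(cR)^{-1}$ per iteration and distributes at most $m$ derivatives on the $C^m$ amplitude $h\chi_0$. This produces a contribution bounded by $C R^{-m}\|h\|_{C^m}$, which is at least as good as $R^{-(n-1)/2}$ because $m=\lfloor(n-1)/2\rfloor+1>(n-1)/2$.

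For the two stationary pieces $\chi_\pm$ I would pull back to Morse coordinates on a tangent plane of $S^{n-1}_\lambda$ and apply the quantitative stationary phase theorem in $\R^{n-1}$: for $a\in C_c^m(\R^{n-1})$ with $m>(n-1)/2$ and a nondegenerate quadratic form $Q$,
\begin{equation*}
\int a(y) e^{itQ(y)}\,dy = \Bigl(\tfrac{2\pi}{t}\Bigr)^{(n-1)/2} e^{i\sigma\pi/4}|\det Q''|^{-1/2} a(0) + r_t,\qquad |r_t|\leq C t^{-(n-1)/2}\|a\|_{C^m},
\end{equation*}
with the remainder even $o(t^{-(n-1)/2})$. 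The signatures at $\xi_\pm$ are opposite, $\pm(n-1)$, and computing the leading Gaussian factor (the Hessian of $\langle\omega,\xi\rangle$ restricted to the sphere is $\mp\id$) produces precisely the expression $\frac{1}{\sqrt{2\pi}}(\sqrt\lambda/|x|)^{(n-1)/2}m_h(x)$, with the two terms coming from the critical points $\xi_+$ and $\xi_-$ respectively. Combining this with the non-stationary bound gives both the pointwise estimate $|\widehat{h\,d\sigma_\lambda}(x)|\leq C\|h\|_{C^m}(1+|x|)^{(1-n)/2}$ (using that for bounded $|x|$ the integral is trivially controlled by $\|h\|_\infty$) and an error term $r(x)$ which is $o(|x|^{(1-n)/2})$ uniformly in the angular variable.

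Finally, the $L^r$ bound follows by direct integration: $\int(1+|x|)^{-r(n-1)/2}\,dx<\infty$ exactly when $r>2n/(n-1)$. For the Cesaro statement I would write $\widehat{h\,d\sigma_\lambda}(x)-\frac{1}{\sqrt{2\pi}}(\sqrt\lambda/|x|)^{(n-1)/2}m_h(x)=r(x)$, pass to polar coordinates, observe that $|r(\rho\omega)|^2\rho^{n-1}$ is uniformly bounded and tends to zero as $\rho\to\infty$ by the stationary-phase remainder, and conclude $\frac{1}{R}\int_{B_R}|r|^2\,dx\to 0$ by dominated convergence. The main technical obstacle is securing the sharp smoothness threshold $m=\lfloor(n-1)/2\rfloor+1$ in the stationary-phase estimate; this is why the rescaling $y=z/\sqrt t$ (or equivalently a dyadic decomposition in the distance to the critical point) has to be carried out carefully, and why the author relegates the proof to an appendix rather than citing an off-the-shelf reference.
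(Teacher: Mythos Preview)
Your proof plan is correct and matches the paper's Appendix~A closely: a partition of unity isolating the two stationary points $\pm\sqrt\lambda\,\hat x$, $m$-fold integration by parts on the non-stationary piece, and a sharp stationary-phase expansion near the critical points yielding the leading term $m_h$. The only notable difference is that the paper obtains the quantitative remainder $O(|x|^{(1-n)/2-\alpha})$ for some $\alpha\in(0,\tfrac14)$, from which the Cesaro limit follows by direct integration, whereas your qualitative $o$-bound plus Cesaro averaging requires (and implicitly uses) uniformity of the remainder in the angular variable $\hat x$.
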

%
%

While Herglotz waves solve the homogeneous Helmholtz equation, we also need to discuss the
inhomogeneous equation. Since $\lambda$ lies in the essential spectrum of $-\Delta$ it is a nontrivial task to
solve $-\Delta u - \lambda u =f$ in $\R^n$. The method to find such solutions is to study the limit
of solutions $u_\eps:=\mathcal R(\lambda+i\eps)f\in H^2(\R^n;\C)$ of $-\Delta u_\eps - (\lambda+i\eps)u_\eps
= f$ in a suitable topology. The complex-valued limit of these functions as $\eps\to 0^+$ is denoted by
$\mathcal R(\lambda+i0)f$ and we define $\fR_\lambda f:=\Real(\mathcal R(\lambda+i0))f$ as its real part since
we are interested in real-valued solutions. These operators have the following properties:
 
\begin{thm}[Theorem~6~\cite{Gut_nontrivial}, Theorem~2.1~\cite{Eveq_plane}] \label{thm:LAP}
  Let $n\in\N,n\geq 2$. The operator $\fR_\lambda:L^t(\R^n)\to L^q(\R^n)$ is a bounded linear operator provided 
 \begin{align} \label{eq:admissible}
   \begin{aligned}
     \frac{1}{t}>\frac{n+1}{2n},\qquad
     \frac{1}{q}<\frac{n-1}{2n},\qquad  
     \frac{2}{n+1}\leq \frac{1}{t}-\frac{1}{q} \leq  \frac{2}{n} \qquad (n\geq 3), \\
     \frac{1}{t}>\frac{n+1}{2n},\qquad
     \frac{1}{q}<\frac{n-1}{2n},\qquad  
     \frac{2}{n+1}\leq \frac{1}{t}-\frac{1}{q} <  \frac{2}{n} \qquad (n=2).
 \end{aligned}
 \end{align}
 Moreover, for $f\in L^t(\R^n)$ the function $\fR_\lambda f\in W^{2,t}_{loc}(\R^n)$ is a real-valued strong
 solution of $-\Delta u - \lambda u=f$ in $\R^n$.
\end{thm}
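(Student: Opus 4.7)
The plan is to analyze the resolvent $\mathcal R(\lambda+i\eps)$ as a Fourier multiplier, split the limiting symbol via the Sokhotski--Plemelj formula, and apply restriction/extension-type estimates uniformly in $\eps$. By rescaling one may reduce to $\lambda=1$. For $\eps>0$ one has
$$
  \mathcal R(1+i\eps)f = \mathcal F^{-1}\!\left(\frac{\hat f(\xi)}{|\xi|^2-1-i\eps}\right),
$$
and as $\eps\to 0^+$ the multiplier converges in $\mathcal S'(\R^n)$ to $\pv(|\xi|^2-1)^{-1}+i\pi\,\delta(|\xi|^2-1)$. Taking real parts identifies $\fR_1 f$ with the principal-value convolution whose symbol is $\pv(|\xi|^2-1)^{-1}$; existence of the limit as a map $L^t\to L^q$ then follows from a uniform bound on $\mathcal R(1+i\eps)$ together with a density argument on Schwartz functions.

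The $L^t\to L^q$ boundedness rests on two classical endpoint estimates. At the outer endpoint $\frac{1}{t}-\frac{1}{q}=\frac{2}{n}$ it is the Kenig--Ruiz--Sogge uniform Sobolev inequality, proved via stationary phase applied to the Hankel-type kernel of the free resolvent combined with a $TT^\ast$ argument and Stein's analytic interpolation over an analytic family of regularized resolvents $(|\xi|^2-1-i\eps)^{-z}$. At the inner endpoint $\frac{1}{t}-\frac{1}{q}=\frac{2}{n+1}$ the required bound is essentially the Stein--Tomas restriction theorem for $S_1^{n-1}$, which in particular controls the imaginary extension-to-the-sphere piece. Riesz--Thorin interpolation between these two endpoints then fills in the full admissible range \eqref{eq:admissible}, and the strict inequality for $n=2$ reflects the loss of the outer endpoint in two dimensions, where the $L^1(\R^2)\to L^\infty(\R^2)$-type estimate for the resolvent kernel fails logarithmically.

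The assertion that $\fR_\lambda f$ is a real-valued strong $W^{2,t}_{loc}$-solution of $-\Delta u-\lambda u=f$ follows from standard elliptic regularity applied to the rewritten equation $-\Delta u=\lambda u+f$ once $u\in L^q_{loc}(\R^n)$ is known, while real-valuedness is immediate from the relation $\mathcal R(\lambda+i\eps)\bar f=\overline{\mathcal R(\lambda-i\eps)f}$ combined with the definition $\fR_\lambda=\Real\,\mathcal R(\lambda+i0)$. The main obstacle is obtaining the Kenig--Ruiz--Sogge uniform bound at the outer endpoint: unlike the Stein--Tomas piece it cannot be extracted from elementary oscillatory-integral estimates but requires the analytic family together with Stein's interpolation machinery to push through a gain of two full derivatives uniformly in the spectral parameter. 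Everything else, including the subsequent Riesz--Thorin interpolation in the admissible triangle and the passage from uniform bounds to the limit $\eps\to 0^+$, is routine.
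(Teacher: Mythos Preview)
The paper does not give its own proof of this theorem: it is quoted verbatim from Guti\'errez (Theorem~6 in~\cite{Gut_nontrivial}) for $n\geq 3$ and Ev\'equoz (Theorem~2.1 in~\cite{Eveq_plane}) for $n=2$, with the only added remark being that the final $W^{2,t}_{loc}$-regularity assertion follows from elliptic regularity for distributional solutions (the paper points to Proposition~A.1 in~\cite{EvWe_dual}). So there is no ``paper's own proof'' to compare against; the result is treated as a black box.

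Your sketch is a faithful high-level outline of the standard argument behind the cited references: the splitting via Sokhotski--Plemelj, Stein--Tomas controlling the inner endpoint $\frac{1}{t}-\frac{1}{q}=\frac{2}{n+1}$, the Kenig--Ruiz--Sogge uniform Sobolev inequality at the outer endpoint $\frac{1}{t}-\frac{1}{q}=\frac{2}{n}$ for $n\geq 3$, interpolation to fill the admissible region, and the logarithmic failure at the outer endpoint when $n=2$. One small refinement: at the inner endpoint, Stein--Tomas alone handles only the surface-measure (imaginary) piece; the principal-value part also needs an argument (this is precisely what Guti\'errez supplies via a near/far-from-sphere decomposition and a $TT^\ast$ argument, in the spirit of what the present paper does in Appendix~C for the Ruiz--Vega estimates). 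Your treatment of the regularity clause matches the paper's one-line justification. As a proof sketch the proposal is correct; it simply goes further than the paper, which is content to cite the result.
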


The last statement is actually not included in the references given above, but it is a consequence of elliptic
regularity theory for distributional solutions. We refer to Proposition~A.1 in~\cite{EvWe_dual} for 
a similar result. Next we discuss the asymptotic behaviour of the solutions $\fR_\lambda f$ that we will
deduce from the following result.

  \begin{prop}\label{prop:LAP_asymptotics}
    Let $n\in\N,n\geq 2$ and assume $f\in L^{p'}(\R^n)$ for $\frac{2(n+1)}{n-1}\leq p\leq
    \frac{2n}{(n-4)_+},(n,p)\neq (4,\infty)$.
    Then:
     $$ 
     \lim_{R\to\infty}  \frac{1}{R} \int_{B_R} \left| \mathcal R(\lambda+i0)f(x) -  
     \sqrt{\frac{\pi}{2\lambda}} \left(\frac{\sqrt\lambda}{|x|}\right)^{\frac{n-1}{2}}
     e^{i(\frac{n-3}{4}\pi-\sqrt\lambda|x|)} \widehat{f}(-\sqrt\lambda \hat x)  \right|^2\,dx 
     = 0. 
   $$
  \end{prop}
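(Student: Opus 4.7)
The plan is to establish the claim first for Schwartz data by an explicit kernel calculation and then extend to general $f\in L^{p'}(\R^n)$ by a density argument that relies on two boundedness estimates in the admissible range of exponents. For $f\in\mathcal{S}(\R^n)$ I would write $\mathcal R(\lambda+i0)f=G_\lambda\ast f$, where $G_\lambda$ is the outgoing/incoming fundamental solution of $-\Delta-\lambda$ given explicitly in terms of a Hankel function,
$$ G_\lambda(x)=\frac{i}{4}\Bigl(\frac{\sqrt\lambda}{2\pi|x|}\Bigr)^{(n-2)/2}H^{(1)}_{(n-2)/2}(\sqrt\lambda|x|). $$
The classical asymptotic expansion $H^{(1)}_\nu(t)\sim\sqrt{2/(\pi t)}\,e^{i(t-\nu\pi/2-\pi/4)}$ as $t\to\infty$ (with complex conjugation chosen to match the resolvent sign convention) yields
$$ G_\lambda(x)=c_n|x|^{(1-n)/2}e^{i(\frac{n-3}{4}\pi-\sqrt\lambda|x|)}+r(x),\qquad |r(x)|\leq C|x|^{-(n+1)/2}\text{ for }|x|\geq 1, $$
with $c_n$ matching the prefactor $\sqrt{\pi/(2\lambda)}\,\lambda^{(n-1)/4}(2\pi)^{-n/2}$ in the target. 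The whole problem thus reduces to propagating this one-point asymptotic through the convolution.

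For $f\in\mathcal{S}(\R^n)$ I would Taylor-expand $|x-y|=|x|-\skp{\hat x}{y}+O(|y|^2/|x|)$ inside the oscillating exponential. The first-order term turns $e^{-i\sqrt\lambda|x-y|}$ into $e^{-i\sqrt\lambda|x|}e^{i\sqrt\lambda\skp{\hat x}{y}}$ up to a multiplicative error of order $O(|y|^2/|x|)$, and the amplitude factor $|x-y|^{(1-n)/2}$ becomes $|x|^{(1-n)/2}(1+O(|y|/|x|))$. Multiplying by $f(y)$, integrating in $y$, and identifying
$$ \int_{\R^n} e^{i\sqrt\lambda\skp{\hat x}{y}} f(y)\,dy=(2\pi)^{n/2}\widehat f(-\sqrt\lambda\hat x), $$
leaves precisely the main term of the proposition, while the Hankel remainder $r$ and the Taylor error contribute a pointwise residual $E(x)$ with $|E(x)|\leq C_f|x|^{-(n+1)/2}$ for $|x|\geq 1$. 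In particular $R^{-1}\int_{B_R}|E|^2\,dx=O(R^{-1})\to 0$, which proves the statement in the Schwartz class.

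The density extension is the delicate step. Denote by $u_f$ the target expression on the right-hand side. The Agmon-type semi\-norm $\|u\|_B^2:=\sup_{R\geq 1}R^{-1}\int_{B_R}|u|^2\,dx$ controls the limsup we want to bound. I would invoke two continuity estimates valid in the range $\frac{2(n+1)}{n-1}\leq p\leq \frac{2n}{(n-4)_+}$: first, the Ruiz--Vega / Kenig--Ruiz--Sogge resolvent estimate (recalled in Appendix C) gives $\|\mathcal R(\lambda+i0)f\|_B\leq C\|f\|_{p'}$, which can also be read off from Theorem~\ref{thm:LAP}; second, the Stein--Tomas restriction theorem gives $\|\widehat f\|_{L^2(S_\lambda^{n-1})}\leq C\|f\|_{p'}$ precisely when $p\geq \frac{2(n+1)}{n-1}$, and writing $u_f$ in polar coordinates yields
$$ \frac{1}{R}\int_{B_R}|u_f|^2\,dx \leq \frac{\pi}{2\lambda}\,\bigl\|\widehat f\,\big|_{S_\lambda^{n-1}}\bigr\|_{L^2}^2. $$
Given $\varepsilon>0$ and $g\in C_c^\infty$ with $\|f-g\|_{p'}<\varepsilon$, the Schwartz case applied to $g$ together with these two bounds applied to $f-g$ shows $\limsup_R R^{-1}\int_{B_R}|\mathcal R(\lambda+i0)f-u_f|^2\,dx\leq C\varepsilon^2$, and letting $\varepsilon\to 0$ finishes the proof.

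The main obstacle is the first of these two continuity estimates in the Agmon norm: the pointwise vanishing for Schwartz data is standard stationary phase, but transferring the vanishing $R^{-1}\int_{B_R}|\cdot|^2\to 0$ to $L^{p'}$ requires the correct uniform bound on $\|\mathcal R(\lambda+i0)f\|_B$, and this bound dictates the upper bound $p\leq\frac{2n}{(n-4)_+}$ with the exclusion $(n,p)\neq(4,\infty)$ reflecting the failure of the endpoint. Matching the Stein--Tomas restriction range with the LAP range on the nose is what forces the hypothesis on $p$ in the proposition.
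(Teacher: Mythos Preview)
Your proof is correct and follows essentially the same route as the paper, which defers to Proposition~2.7 in Ev\'equoz--Weth~\cite{EvWe_dual}: the Schwartz-class case via the explicit Hankel kernel plus stationary phase/Taylor expansion, followed by a density argument based on the Stein--Tomas restriction estimate and the Ruiz--Vega bound $\|\mathcal R(\lambda+i0)f\|_B\le C\|f\|_{p'}$. One small correction: the Agmon-norm estimate cannot be ``read off from Theorem~\ref{thm:LAP}'', since the $L^t\to L^q$ bounds there only yield $R^{-1}\int_{B_R}|u|^2\le C R^{n-1-2n/q}$, which diverges for every admissible $q>\frac{2n}{n-1}$; you really need the Ruiz--Vega estimate from Appendix~C, as you correctly say elsewhere.
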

  \begin{proof}
    The claim for $n\geq 3$ and $\lambda=1$ is provided in Proposition~2.7~\cite{EvWe_dual} 
    so that the general case follows from rescaling via $\mathcal{R}_\lambda f(x) = \frac{1}{\lambda}
    \mathcal{R}_1 (f(\lambda^{-1/2}\cdot))(\sqrt\lambda x)$. The proof in the case $n=2$ is essentially the same.
    Indeed, repeating the proof of Proposition~2.7~\cite{EvWe_dual} one finds that the claimed result holds
    true provided Proposition~2.6 in~\cite{EvWe_dual} (the Stein-Tomas Theorem) 
    and the estimate 
    \begin{equation}\label{eq:estimate_asymptotics}
      \sup_{R\geq 1} \left(\frac{1}{R}\int_{B_R} |\mathcal R(\lambda+i0)f(x)|^2\,dx  \right)^{1/2} \leq C
      \|f\|_{p'} 
    \end{equation}
    are valid in the case $n=2$ under our assumptions on $p$. For the Stein-Tomas Theorem this is clear. 
    The inequality~\eqref{eq:estimate_asymptotics} is due to Ruiz and Vega~\cite{RuVe_OnLocal}, but we could
    not find an accurate reference for it that covers our range of
    exponents and all space dimensions $n\geq 2$. We provide the estimate~\eqref{eq:estimate_asymptotics}
    and further details in Appendix~C so that the proof is finished.
  \end{proof}

  Next we recall a Limiting Absorption Principle that we will need in the discussion of the fourth order
  problem~\eqref{eq:4thorderNLH}. In Theorem~3.3 in~\cite{BoCaMa_4thorder} the following
  extension of Theorem~\ref{thm:LAP} to linear differential operators of the
  form $\Delta^2-\beta\Delta+\alpha$ was proved.

\begin{thm}[Theorem~3.3~\cite{BoCaMa_4thorder}] \label{thm:4thorder_resolvent}
  Let $n\in\N,n\geq 2$ and assume (i) or (ii). Then there is a bounded linear operator
  $\bold{R}:L^t(\R^n)\to L^q(\R^n)$ for   
   \begin{align*}  
   \begin{aligned}
     \frac{1}{t}>\frac{n+1}{2n},\qquad
     \frac{1}{q}<\frac{n-1}{2n},\qquad  
     \frac{2}{n+1}\leq \frac{1}{t}-\frac{1}{q} \leq  \frac{4}{n} \qquad &\text{if }n\geq 5, \\
     \frac{1}{t}>\frac{n+1}{2n},\qquad
     \frac{1}{q}<\frac{n-1}{2n},\qquad  
     \frac{2}{n+1}\leq \frac{1}{t}-\frac{1}{q} <  1 \qquad &\text{if }n=4, \\
     \frac{1}{t}>\frac{n+1}{2n},\qquad
     \frac{1}{q}<\frac{n-1}{2n},\qquad  
     \frac{2}{n+1}\leq \frac{1}{t}-\frac{1}{q} \leq 1 \qquad &\text{if }n=2,3
 \end{aligned}
 \end{align*}
 such that for $f\in L^t(\R^n)$ the function $\bold{R} f$ belongs to $W^{4,t}_{loc}(\R^n)$ and is a
 real-valued strong solution of $\Delta^2 u-\beta\Delta u +\alpha u=0$ in $\R^n$.
\end{thm}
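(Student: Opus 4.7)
The plan is to exploit the factorization of the Fourier symbol $P(\xi)=|\xi|^4+\beta|\xi|^2+\alpha$ of the operator $L:=\Delta^2-\beta\Delta+\alpha$. The roots of the auxiliary polynomial $X^2+\beta X+\alpha$ are $\frac{-\beta\pm\sqrt{\beta^2-4\alpha}}{2}$, so that in case (i) one obtains $L=(-\Delta-\lambda)(-\Delta+\mu)$ with $\lambda,\mu>0$, whereas in case (ii) one obtains $L=(-\Delta-\lambda_1)(-\Delta-\lambda_2)$ with $\lambda_1,\lambda_2>0$. The strategy is to invert each second-order factor separately, using Theorem~\ref{thm:LAP} for the Helmholtz factors and classical Bessel-type resolvent estimates for the positive factor in case~(i).

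In case (i) I would define
$$
  \bold R f := \fR_\lambda\bigl((-\Delta+\mu)^{-1}f\bigr),
$$
where $(-\Delta+\mu)^{-1}$ is the standard Bessel potential of order~$2$. By the Hardy--Littlewood--Sobolev inequality it is bounded $L^t\to L^{t_1}$ for $\frac{1}{t}-\frac{1}{t_1}\in[0,\frac{2}{n}]$, while $\fR_\lambda:L^{t_1}\to L^q$ is bounded on the Gutiérrez range $\frac{1}{t_1}-\frac{1}{q}\in[\frac{2}{n+1},\frac{2}{n}]$. Optimising over the intermediate $t_1$ produces the full claimed range $\frac{1}{t}-\frac{1}{q}\in[\frac{2}{n+1},\frac{4}{n}]$; the dimension-dependent endpoint restrictions then follow from the Bessel endpoint failure at $s/n=1$ when $n=4$ and from the constraints $\frac{1}{t}\leq 1$, $\frac{1}{q}\geq 0$ when $n=2,3$. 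In case (ii) the partial-fraction identity suggests
$$
  \bold R f := \frac{1}{\lambda_2-\lambda_1}\bigl(\fR_{\lambda_1}f-\fR_{\lambda_2}f\bigr),
$$
which by Theorem~\ref{thm:LAP} is immediately bounded on the Gutiérrez range $[\frac{2}{n+1},\frac{2}{n}]$. Reaching the full range $\leq\frac{4}{n}$ here requires additional work: one splits $1/P(\xi)$ via smooth Fourier cutoffs into two pieces supported near the spheres $\{|\xi|^2=\lambda_j\}$, each handled by Gutiérrez's estimate, plus a far-field remainder that decays like $|\xi|^{-4}$ and is handled as a Bessel-type operator of order $4$; Riesz--Thorin interpolation between these bounds then yields the range $[\frac{2}{n+1},\frac{4}{n}]$.

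Real-valuedness of $\bold R f$ is inherited from that of $\fR_\lambda$ and of the selfadjoint resolvent $(-\Delta+\mu)^{-1}$, and the identity $L\bold R f=f$ as tempered distributions holds by construction on the Fourier side. Standard elliptic regularity for the uniformly elliptic operator $L$ (whose principal part $\Delta^2$ is elliptic) then upgrades $\bold R f \in L^q_{loc}$ to $\bold R f\in W^{4,t}_{loc}$ and delivers the strong-solution property. The main obstacle is case~(ii) at $\frac{1}{t}-\frac{1}{q}>\frac{2}{n}$: a direct composition $\fR_{\lambda_2}\circ\fR_{\lambda_1}$ is unavailable because the admissible output spaces of Gutiérrez's theorem lie in a spectral ``gap'' disjoint from its admissible input spaces, so the partial-fraction ansatz on its own only reaches the Gutiérrez range and must be supplemented by the cutoff-and-interpolation step described above.
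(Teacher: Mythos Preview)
This theorem is not proved in the present paper; it is quoted as Theorem~3.3 of \cite{BoCaMa_4thorder}, so there is no in-paper argument to compare against. Your overall strategy --- factor the symbol, treat each quadratic factor separately, and upgrade to $W^{4,t}_{loc}$ by elliptic regularity --- is the right one, and your handling of case~(ii) via partial fractions supplemented by a Fourier cutoff near the spheres plus a $|\xi|^{-4}$-decaying remainder is correct in outline.

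There is, however, a genuine gap in your case~(i) argument. The claim that ``optimising over the intermediate $t_1$ produces the full claimed range $[\frac{2}{n+1},\frac{4}{n}]$'' for the single composition $\fR_\lambda\circ(-\Delta+\mu)^{-1}$ is false. Gutiérrez's theorem forces $\frac{1}{t_1}>\frac{n+1}{2n}$ on the input of $\fR_\lambda$, while its gap constraint forces $\frac{1}{t_1}\le\frac{1}{q}+\frac{2}{n}$; when $\frac{1}{q}<\frac{n-3}{2n}$ (possible for $n\ge 4$) these are incompatible. Concretely, for $n=5$ the pair $(t,q)=(1,5)$ satisfies all the hypotheses ($\frac{1}{t}-\frac{1}{q}=\frac{4}{5}=\frac{4}{n}$), yet no admissible $t_1$ exists, and reversing the order of composition fails for the dual reason. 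The repair is either to sandwich the Helmholtz factor as $(-\Delta+\mu)^{-\theta}\,\fR_\lambda\,(-\Delta+\mu)^{-(1-\theta)}$ with $\theta\in[0,1]$ chosen depending on $(t,q)$, or --- more uniformly --- to apply to case~(i) the very cutoff-plus-remainder argument you already propose for case~(ii): near the single sphere the symbol $\frac{\phi(\xi)}{(|\xi|^2-\lambda)(|\xi|^2+\mu)}$ is a smooth compactly supported multiple of $\frac{1}{|\xi|^2-\lambda}$, so the localized operator is bounded without the upper gap restriction $\frac{2}{n}$, while the far piece has smooth symbol decaying like $|\xi|^{-4}$ and supplies the $\frac{4}{n}$ endpoint.
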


  Finally we provide the tools for proving Theorem~\ref{thm:NLM}. As for the previous results we need a
   family of elements lying in the kernel of the linear operator which now is  
   $E\mapsto \nabla\times\nabla\times E-\lambda E$. These are given by  vectorial
   variants of the Herglotz waves 
   $$
    \widehat{h\,d\sigma_\lambda}(x) 
    :=  \frac{1}{(2\pi)^{3/2}} \int_{S_\lambda^2} h(\xi) e^{-i\skp{x}{\xi}}\,d\sigma_\lambda(\xi) 
   $$
   (the integral to be understood componentwise) where $h:S_\lambda^2\to\C^3$ is a tangential vectorfield
   field, i.e. $\skp{h(\xi)}{\xi}=0$ for all $\xi\in S_\lambda^2$. These
   functions are real-valued whenever $h(\xi)=\ov{h(-\xi)}$. Applying the results from
   Proposition~\ref{prop:Herglotz_waves} in each component, we deduce the following properties.
 
 \begin{prop}\label{prop:Herglotz_waves_NLM}
   For all $h\in Z$ the function $ \widehat{h\,d\sigma_\lambda}$  is an analytic solution of
   $\nabla\times\nabla\times \phi - \lambda\phi=0$ in $\R^3$ and satisfies
   the pointwise estimate $|\widehat{h\,d\sigma_\lambda}(x)|\leq \|h\|_{C^2}(1+|x|)^{-1}$ for all $x\in\R^3$.
   In particular, $\|\widehat{h\,d\sigma_\lambda}\|_r \leq C_r \|h\|_{C^2}$ for all $r>3$. 
   If $h\in Z^\delta_{cyl}$, then $\widehat{h\,d\sigma_\lambda}$ is cylindrically symmetric.
 \end{prop}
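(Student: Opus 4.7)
The plan is to reduce everything to the scalar statement (Proposition~\ref{prop:Herglotz_waves}) applied componentwise, using the tangentiality condition $\skp{h(\xi)}{\xi}=0$ to convert the Helmholtz information into information about the curl-curl operator.

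First I would verify analyticity. Since $h$ is continuous on the compact set $S_\lambda^2$ and the exponential $x\mapsto e^{-i\skp{x}{\xi}}$ extends to an entire function of $x\in\C^3$ with local bounds uniform in $\xi\in S_\lambda^2$, dominated convergence (or a standard Paley--Wiener argument) lets one differentiate under the integral arbitrarily often, showing that $\widehat{h\,d\sigma_\lambda}$ is real-analytic (even entire). Next I would show $\nabla\cdot \widehat{h\,d\sigma_\lambda}=0$: differentiating under the integral produces
\[
  \nabla\cdot \widehat{h\,d\sigma_\lambda}(x)
  = \frac{-i}{(2\pi)^{3/2}}\int_{S_\lambda^2} \skp{h(\xi)}{\xi}\,e^{-i\skp{x}{\xi}}\,d\sigma_\lambda(\xi) = 0
\]
by the tangentiality built into the definition of $Z$. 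Applying Proposition~\ref{prop:Herglotz_waves} componentwise (with $n=3$, so $m=2$) to each of the three scalar Herglotz waves shows that $-\Delta\widehat{h\,d\sigma_\lambda}=\lambda\widehat{h\,d\sigma_\lambda}$. Combining the two facts via the identity $\nabla\times\nabla\times E = -\Delta E + \nabla(\diver E)$ gives $\nabla\times\nabla\times \widehat{h\,d\sigma_\lambda} = \lambda \widehat{h\,d\sigma_\lambda}$ in $\R^3$ as required.

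For the pointwise estimate I would apply Proposition~\ref{prop:Herglotz_waves} componentwise once more: each coordinate function $h_j$ lies in $C^2(S_\lambda^2;\C)$ with $\|h_j\|_{C^2}\leq \|h\|_{C^2}$, and Proposition~\ref{prop:Herglotz_waves} yields $|\widehat{h_j\,d\sigma_\lambda}(x)|\leq C\|h_j\|_{C^2}(1+|x|)^{-1}$ since $\frac{1-n}{2}=-1$ for $n=3$. Summing over $j=1,2,3$ gives the stated bound $|\widehat{h\,d\sigma_\lambda}(x)|\leq C\|h\|_{C^2}(1+|x|)^{-1}$. The $L^r$ estimate for $r>3$ follows immediately because $(1+|x|)^{-1}\in L^r(\R^3)$ precisely when $r>3$.

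It remains to handle the cylindrical symmetry assertion, which I expect to be the most delicate point because it requires tracking how the ansatz~\eqref{eq:cylindricallysym} interacts with the surface Fourier transform. The cleanest route is to use equivariance under the $SO(2)$ action on $\R^3$ fixing the $x_3$-axis. A vector field of the form~\eqref{eq:cylindricallysym} is exactly one satisfying $E(Rx)=RE(x)$ for every such rotation $R$ and being orthogonal to the $x_3$-direction in an appropriate sense. A short change of variables $\eta = R^T\xi$ on the rotationally invariant measure $\sigma_\lambda$ gives
\[
  \widehat{h\,d\sigma_\lambda}(Rx)
  = \frac{1}{(2\pi)^{3/2}}\int_{S_\lambda^2} h(R\eta)\,e^{-i\skp{x}{\eta}}\,d\sigma_\lambda(\eta),
\]
so if $h(R\eta)=Rh(\eta)$ (which is precisely the statement that $\Real(h)$ and $\Imag(h)$ satisfy~\eqref{eq:cylindricallysym}, interpreted on the sphere) one concludes $\widehat{h\,d\sigma_\lambda}(Rx)=R\widehat{h\,d\sigma_\lambda}(x)$. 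Combined with the reality ensured by $h(\xi)=\overline{h(-\xi)}$ and the divergence-free property shown above, this equivariance forces $\widehat{h\,d\sigma_\lambda}$ into the form~\eqref{eq:cylindricallysym}, completing the proof.
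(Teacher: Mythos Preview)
Your approach is exactly the paper's: it states only that the result follows by ``applying the results from Proposition~\ref{prop:Herglotz_waves} in each component'', and you carry this out, adding the observation that tangentiality of $h$ gives $\diver\widehat{h\,d\sigma_\lambda}=0$ so that the identity $\nabla\times\nabla\times=-\Delta+\nabla\diver$ converts the componentwise Helmholtz equation into the curl-curl equation.

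One point in the cylindrical symmetry argument deserves a line more. $SO(2)$-equivariance together with vanishing third component does \emph{not} by itself characterize the form~\eqref{eq:cylindricallysym}: an equivariant field with $E_3=0$ may still carry a radial part $E_r(r,z)\hat r$. Your appeal to the divergence-free property is the right remedy, but the mechanism should be made explicit: with $E_z=0$ and $\partial_\theta E_\theta=0$ the equation $\diver E=0$ reduces to $\partial_r(rE_r)=0$, so $rE_r$ is independent of $r$, and analyticity (indeed boundedness) at the axis forces $E_r\equiv 0$. Alternatively, observe that $h$ of the form~\eqref{eq:cylindricallysym} also satisfies $h(T\xi)=-Th(\xi)$ for the reflection $T:(\xi_1,\xi_2,\xi_3)\mapsto(\xi_1,-\xi_2,\xi_3)$; this anti-equivariance passes to $\widehat{h\,d\sigma_\lambda}$ and kills both $E_r$ and $E_z$ directly, bypassing the divergence argument.
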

 
  Having described the analoga of the Herglotz waves we finally discuss a  Limiting Absorption
  Principle for the curl-curl operator. So let $\mathcal R(\lambda+i\eps)$
  denote the resolvent of 
  $E\mapsto \nabla\times\nabla\times E - (\lambda+i\eps)E$ which we will prove to exist in
  Proposition~\ref{prop:selfadjointness}. As for the Helmholtz operator one is interested in the
  (complex-valued) limit $\mathcal R(\lambda+i0)G$ for $G\in L^2(\R^3;\R^3)$.
  It turns out that $G$ decomposes into two parts behaving quite differently. So we split
  $G$ into a curl-free (gradient-like) part $G_1:\R^3\to\R^3$ and a divergence-free remainder
  $G_2:\R^3\to\R^3$ of $G$ which are defined via
  $$ 
   G_1 := \mathcal F^{-1}\left(\skp{\hat G(\xi)}{\frac{\xi}{|\xi|}}\frac{\xi}{|\xi|}\right),\qquad 
   G_2 := \mathcal F^{-1}\left(\hat G(\xi)-\skp{\hat G(\xi)}{\frac{\xi}{|\xi|}}\frac{\xi}{|\xi|}\right).
  $$
  This splitting corresponds to a Helmholtz decomposition of a vector field in $\R^3$. 

\begin{thm} \label{thm:LAP_NLM}
  Let $\lambda>0$ and assume that $t,q\in (1,\infty)$ satisfy~\eqref{eq:admissible}. Then there is a bounded
  linear operator $R_\lambda:L^t(\R^3;\R^3)\cap L^q(\R^3;\R^3)\to L^q(\R^3;\R^3)$ such that $R_\lambda G\in
  H_{loc}(\curl;\R^3)$ is a weak solution of $\nabla\times\nabla\times E-\lambda E = G$ 
  provided $G\in L^t(\R^3;\R^3)\cap L^q(\R^3;\R^3)$. Moreover, we have
  \begin{equation*}
    \|R_\lambda G\|_q \leq C(\|G_1\|_q+\|G_2\|_t) \leq C(\|G\|_q+\|G\|_t) 
  \end{equation*} 
  and $R_\lambda G = -\frac{1}{\lambda}G_1 + \fR_\lambda G_2$ for $\fR_\lambda$ from Theorem~\ref{thm:LAP}
  (applied componentwise).
  If $G\in L^t(\R^3;\R^3)$ is cylindrically symmetric then so is $R_\lambda G$ and $R_\lambda G\in
  W_{loc}^{2,q}(\R^3;\R^3)$ is a strong solution satisfying $\|R_\lambda G\|_q \leq C\|G\|_t$.
\end{thm}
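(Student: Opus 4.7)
The plan rests on the Helmholtz decomposition $G = G_1 + G_2$ defined by the Fourier multipliers in the statement. On the Fourier side, $\widehat{G_1}(\xi)$ is parallel to $\xi$ and $\widehat{G_2}(\xi)$ is orthogonal to $\xi$, so $G_1$ is curl-free and $G_2$ is divergence-free (in the distributional sense). These projections are built from Riesz transforms, hence are Calderón--Zygmund operators and bounded on $L^r(\R^3;\R^3)$ for every $1<r<\infty$. In particular, $\|G_1\|_q \leq C\|G\|_q$ and $\|G_2\|_t \leq C\|G\|_t$ whenever $G \in L^t \cap L^q$.

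For the curl-free piece I would simply set $E_1 := -\frac{1}{\lambda}G_1$. Since $\nabla \times E_1 = 0$, one has $\nabla\times\nabla\times E_1 = 0$, so $E_1$ is a (trivial) solution of $\nabla\times\nabla\times E_1 - \lambda E_1 = G_1$, with the bound $\|E_1\|_q \leq \frac{1}{\lambda}\|G_1\|_q \leq C\|G\|_q$. For the divergence-free piece I would apply the scalar Helmholtz resolvent of Theorem~\ref{thm:LAP} componentwise and set $E_2 := \fR_\lambda G_2 \in L^q(\R^3;\R^3)$, which gives a strong solution of $-\Delta E_2 - \lambda E_2 = G_2$ together with the bound $\|E_2\|_q \leq C\|G_2\|_t \leq C\|G\|_t$. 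Since $G_2$ is divergence-free and $\fR_\lambda$ commutes with spatial derivatives, $E_2$ is also divergence-free, so the identity $\nabla\times\nabla\times E_2 = -\Delta E_2 + \nabla(\nabla\cdot E_2) = -\Delta E_2$ turns the scalar equation into $\nabla\times\nabla\times E_2 - \lambda E_2 = G_2$. Adding the two pieces yields $R_\lambda G := -\frac{1}{\lambda}G_1 + \fR_\lambda G_2$ with the asserted bound and the required weak solution property. Membership in $H_{loc}(\curl;\R^3)$ is immediate since $\nabla\times E_1 = 0$ and $E_2 \in W^{2,t}_{loc}$.

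The cylindrically symmetric case is easier: any vector field of the form~\eqref{eq:cylindricallysym} is automatically divergence-free, as a direct computation shows, so $G_1=0$ and $G=G_2$. Then $R_\lambda G = \fR_\lambda G$, and the elliptic regularity part of Theorem~\ref{thm:LAP} applied componentwise yields $R_\lambda G \in W^{2,q}_{loc}(\R^3;\R^3)$ as a strong solution, together with $\|R_\lambda G\|_q \leq C\|G\|_t$. Cylindrical symmetry of $R_\lambda G$ follows because the Helmholtz resolvent commutes with rotations about the $x_3$-axis.

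The main obstacle I anticipate is a rigorous justification that $E_2 = \fR_\lambda G_2$ is divergence-free, since $\fR_\lambda$ is only defined as the real part of a limit of resolvents and not as a straightforward Fourier multiplier on $L^t$. The cleanest way is to work with the approximating resolvents $\mathcal R(\lambda+i\eps)$, for which the identity $\diver \mathcal R(\lambda+i\eps)G_2 = \mathcal R(\lambda+i\eps)(\diver G_2) = 0$ is standard on $L^2$, and then pass to the limit $\eps\to 0^+$ in $L^q$, where the projection onto divergence-free fields is continuous by the boundedness of the Riesz transforms. Once this is done, all other verifications (identifying $\nabla\times\nabla\times E_2$ with $-\Delta E_2$ distributionally, and checking the weak formulation of the curl-curl equation against a test vector field) are routine.
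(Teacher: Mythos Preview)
Your overall strategy matches the paper's: Helmholtz decomposition via Riesz transforms, trivial treatment of the curl-free part $G_1$, componentwise application of the scalar resolvent $\fR_\lambda$ to the divergence-free part $G_2$, and divergence-freeness of $E_2=\fR_\lambda G_2$ verified through the approximating resolvents $\mathcal R(\lambda+i\eps)$. The cylindrical case is likewise handled in the same spirit (such fields are divergence-free, so $G_1=0$); the paper does this verification on the Fourier side, showing directly that $\hat G$ retains the form~\eqref{eq:cylindricallysym}, whereas you invoke the divergence computation and rotational equivariance. One small omission in your last sentence: commuting with rotations about the $x_3$-axis alone gives only equivariance, not the specific azimuthal form~\eqref{eq:cylindricallysym}; you also need that the third component vanishes and that the result is divergence-free, both of which you have.

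There is one genuine gap. You assert that $R_\lambda G\in H_{loc}(\curl;\R^3)$ is immediate from $E_2\in W^{2,t}_{loc}$, but for $n=3$ the admissibility conditions only force $t\in(1,\tfrac{3}{2})$, and when $t<\tfrac{6}{5}$ the embedding $W^{2,t}_{loc}(\R^3)\hookrightarrow W^{1,2}_{loc}(\R^3)$ fails, so $\nabla\times E_2\in L^2_{loc}$ does not follow. The fix is easy: since $G_2\in L^q$ as well (Riesz transforms are bounded there too) and $E_2\in L^q$, interior elliptic regularity applied to $-\Delta E_2=\lambda E_2+G_2$ yields $E_2\in W^{2,q}_{loc}$, and $q>3$ is more than enough. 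The paper takes a different route at this step: instead of scalar elliptic regularity it tests the approximate equation $\nabla\times\nabla\times E^\eps-(\lambda+i\eps)E^\eps=G$ against $\overline{E^\eps}\phi^2$ for a cutoff $\phi$, obtains a uniform-in-$\eps$ bound on $\int_K|\nabla\times E^\eps|^2$ directly from the curl-curl structure, and passes to the weak limit.
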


 The proof of Theorem~\ref{thm:LAP_NLM} will be given in Appendix~B. With these technical preparations we
 have all the tools to prove our main results in the following sections.
    
\section{Proof of Theorem~\ref{thm:existence_via_BanachFPT}} \label{sec:ProofThm1}

We prove Theorem~\ref{thm:existence_via_BanachFPT} with the aid of Banach's Fixed Point Theorem following the
approach by Guti\'{e}rrez~\cite{Gut_nontrivial}. We consider the map $T(\cdot,h):L^q(\R^n)\to L^q(\R^n)$
given by
\begin{align} \label{eq:defnT}
  T(u,h) :=   \widehat{h\,d\sigma_\lambda} + \fR_\lambda (f(\cdot,\chi(u)))
\end{align}
where $h\in X_\lambda^\delta$ as in Proposition~\ref{prop:Herglotz_waves} and $\chi$ is a smooth function such
that $|\chi(z)|\leq \min\{|z|,1\}$ and $\chi(z)=z$ for $|z|\leq \frac{1}{2}$. In view of the
properties of the Herglotz waves and $\fR_\lambda$ mentioned earlier,  a fixed point of $T(\cdot,h)$ is a
strong solution of the equation $-\Delta u -\lambda u = f(x,\chi(u))$ in $\R^n$. Since that fixed point $u$
will belong to a small ball in $L^q(\R^n)$, we will be able to show $\chi(u)=u$ so that a solution
of~\eqref{eq:NLH} is found. The choice of the exponent $q$ is delicate; it's the major technical issue in our
approach. Our assumption~\eqref{eq:choice_of_ps} implies that the set
\begin{equation*}
   \Xi_{s,p}:= \left\{q\in \left(\frac{2n}{n-1},\frac{2n}{(n-3)_+} \right) : \; q<
   \min\Big\{\frac{s(n+1)(p-2)}{(2s-(n+1))_+},  \frac{2ns(p-1)}{(s(n+1)-2n)_+}\Big\}
   \right\}   
\end{equation*}
is non-empty, so we may choose some arbitrary but fixed $q\in \Xi_{s,p}$ throughout this section.  
 
 \begin{prop}\label{prop:estimates_welldefinedness}
   Assume (A) and $\lambda>0,h\in X_\lambda^\delta$. Then  the map $T(\cdot,h):L^q(\R^n) \to L^q(\R^n)$
   from~\eqref{eq:defnT} is well-defined and we have  
   \begin{align}\label{eq:estimate_selfmap}
     \begin{aligned}
     \|T(u,h)\|_q &\leq C(\|u\|_q^\alpha  + \|h \|_{C^m}), \\
     \|T(u,h)-T(v,h)\|_q &\leq  C \|u-v\|_q  (\|u\|_q+\|v\|_q)^{\alpha-1}
   \end{aligned}
   \end{align} 
   for some $\alpha>1$ and all $u,v\in L^q(\R^n)$.
 \end{prop}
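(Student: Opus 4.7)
The map $T(\cdot, h)$ naturally splits into the Herglotz wave $\widehat{h\,d\sigma_\lambda}$, which does not depend on $u$, and the nonlinear resolvent term $\fR_\lambda(f(\cdot, \chi(u)))$. The plan is to estimate each piece separately in $L^q(\R^n)$. Since $q > \frac{2n}{n-1}$ by the definition of $\Xi_{s,p}$, Proposition~\ref{prop:Herglotz_waves} immediately gives $\|\widehat{h\,d\sigma_\lambda}\|_q \leq C\|h\|_{C^m}$, which provides the $\|h\|_{C^m}$ summand in~\eqref{eq:estimate_selfmap}.

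For the nonlinear term I will follow Guti\'errez' strategy of pushing $f(\cdot, \chi(u))$ into an auxiliary space $L^t(\R^n)$ via Hölder's inequality and returning to $L^q(\R^n)$ via Theorem~\ref{thm:LAP}. The truncation $|\chi(u)|\leq\min\{|u|,1\}$ is the key: for any $\alpha\in[0,p-1]$ assumption~(A) yields
\[
|f(x,\chi(u))|\leq Q(x)|\chi(u)|^{p-1} = Q(x)|\chi(u)|^{p-1-\alpha}|\chi(u)|^\alpha \leq Q(x)|u|^\alpha
\]
(using $|\chi(u)|^{p-1-\alpha}\leq 1$ for the first factor and $|\chi(u)|\leq |u|$ for the second). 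Interpolation of $Q\in L^s\cap L^\infty$ gives $Q\in L^a$ for every $a\in[s,\infty]$, so Hölder's inequality produces
\[
\|f(\cdot,\chi(u))\|_t \leq \|Q\|_a\|u\|_q^\alpha, \qquad \frac{1}{t}=\frac{1}{a}+\frac{\alpha}{q},
\]
and Theorem~\ref{thm:LAP} then delivers $\|\fR_\lambda f(\cdot,\chi(u))\|_q\leq C\|u\|_q^\alpha$, provided $(t,q)$ lies in the admissible range~\eqref{eq:admissible} for some valid choice of $\alpha\in(1,p-1]$ and $a\in[s,\infty]$.

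The crux of the argument---and the motivation behind the intricate definition of $\Xi_{s,p}$---is to verify that such parameters exist whenever $q\in\Xi_{s,p}$. Each of the four inequalities in~\eqref{eq:admissible} translates into a bound relating $\alpha, a, q, p, s, n$; with the natural choice $a=s$, $\alpha=p-1$ these become precisely the constraints $q>\frac{2n}{n-1}$, $q<\frac{2ns(p-1)}{(s(n+1)-2n)_+}$, and $q\leq\frac{s(n+1)(p-2)}{(2s-(n+1))_+}$ that are built into $\Xi_{s,p}$, while the upper-gap inequality $\frac{1}{t}-\frac{1}{q}\leq\frac{2}{n}$ reduces to~\eqref{eq:choice_of_ps} when $2s>n$ and is handled by slightly enlarging $a$ beyond $s$ otherwise. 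Once admissibility is confirmed, the first bound in~\eqref{eq:estimate_selfmap} follows with $\alpha=p-1>1$.

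For the Lipschitz estimate I will use the second inequality of~(A) in exactly the same way. The cutoff $\chi$ is smooth with bounded derivative, hence Lipschitz, and since $|\chi(u)|+|\chi(v)|\leq\min\{|u|+|v|,\,2\}$ one has $(|\chi(u)|+|\chi(v)|)^{p-2}\leq C(|u|+|v|)^{\alpha-1}$ for the same $\alpha\in(1,p-1]$. Consequently
\[
|f(x,\chi(u))-f(x,\chi(v))|\leq C\,Q(x)(|u|+|v|)^{\alpha-1}|u-v|,
\]
and a three-factor Hölder inequality with exponents $(a,\,q/(\alpha-1),\,q)$ yields $\|f(\cdot,\chi(u))-f(\cdot,\chi(v))\|_t\leq C\|Q\|_a(\|u\|_q+\|v\|_q)^{\alpha-1}\|u-v\|_q$. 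A final application of Theorem~\ref{thm:LAP} produces the Lipschitz bound in~\eqref{eq:estimate_selfmap}. I expect the admissibility bookkeeping in the third paragraph to be the main obstacle; everything else is routine Hölder and LAP manipulation.
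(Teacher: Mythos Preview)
Your overall architecture is correct and matches the paper's: estimate the Herglotz wave by Proposition~\ref{prop:Herglotz_waves}, push $f(\cdot,\chi(u))$ into some $L^t$ via H\"older using $|\chi(u)|\le\min\{|u|,1\}$ and $Q\in L^{\tilde s}$ for $\tilde s\in[s,\infty]$, then return to $L^q$ via Theorem~\ref{thm:LAP}. The Lipschitz step is also set up correctly.

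The gap is in your admissibility check. You commit to $\alpha=p-1$ and propose to repair the upper-gap constraint $\tfrac{1}{t}-\tfrac{1}{q}\le\tfrac{2}{n}$ by ``slightly enlarging $a$ beyond $s$''. This cannot work in general: with $\alpha=p-1$ one has $\tfrac{1}{t}-\tfrac{1}{q}=\tfrac{1}{a}+\tfrac{p-2}{q}$, and if $p$ is large the term $\tfrac{p-2}{q}$ alone already exceeds $\tfrac{2}{n}$ for every $q\in\Xi_{s,p}$. Concretely, take $n=4$, $s=\infty$, $p=10$; then $\Xi_{\infty,10}=(\tfrac{8}{3},8)$, yet $\tfrac{p-2}{q}=\tfrac{8}{q}>1>\tfrac{2}{n}=\tfrac12$ for all such $q$, regardless of $a$. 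Your claim that this constraint ``reduces to~\eqref{eq:choice_of_ps}'' is also not right: the upper gap with $a=s,\alpha=p-1$ becomes a \emph{lower} bound $q\ge \tfrac{ns(p-2)}{2s-n}$ on $q$, which is not encoded in $\Xi_{s,p}$ and need not be compatible with $q<\tfrac{2n}{(n-3)_+}$.

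The fix is exactly what the paper does: let $\alpha$ vary as well. The paper parametrises by $t$ and takes $\alpha_t:=\min\{q/t,\,p-1\}$, with the H\"older exponent $\tilde s:=\tfrac{tq}{(q-t(p-1))_+}$ chosen as large as possible. Then one only needs $t^*(q,s)<\tfrac{(n+1)q}{n+1+2q}$ and $t^*(q,s)<\tfrac{2n}{n+1}$, and \emph{these} two inequalities are precisely what $q\in\Xi_{s,p}$ guarantees; the lower bound $t\ge\tfrac{nq}{n+2q}$ coming from the upper gap is accommodated automatically because $q<\tfrac{2n}{(n-3)_+}$ forces $\tfrac{nq}{n+2q}<\tfrac{2n}{n+1}$. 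In your language: do not fix $\alpha=p-1$; instead choose $t$ in the admissible window first and let $\alpha=\min\{q/t,p-1\}$, which is still $>1$ since $t<q$. With that single adjustment your plan goes through verbatim, and the resulting exponent $\alpha$ in~\eqref{eq:estimate_selfmap} may well be strictly smaller than $p-1$.
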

 \begin{proof}
    Using (A),  H\"older's inequality and $|\chi(u)|\leq \min\{|u|,1\}$  we get for all $u\in L^q(\R^n)$ 
   \begin{align} \label{eq:def_t*} 
     \begin{aligned}
     &\|f(\cdot,\chi(u))\|_t
     \leq \| |Q| |\chi(u)|^{p-1}\|_t
     \leq \|Q\|_{\tilde s} \|\chi(u)^{p-1}\|_{\frac{t\tilde s}{\tilde s-t}}  
     \leq \|Q \|_{\tilde s} \|u\|_q^{\frac{q}{t}-\frac{q}{\tilde s}}<\infty,\\
    &\text{provided}\quad  
    t^*(q,\tilde s):=\max\Big\{1,\frac{\tilde sq}{\tilde s(p-1)+q}\Big\}\leq t\leq \tilde s,\; \tilde s\in
    [s,\infty].
    \end{aligned} 
   \end{align} 
   In particular we find $f(\cdot,\chi(u))\in L^t(\R^n)$
   for all $t\in [t^*(q,s),\infty]$. For any such $t$ we choose $\tilde s:= \frac{tq}{(q-t(p-1))_+}\in
   [s,\infty]$ (largest possible) so that $t^*(q,\tilde s)\leq t\leq \tilde s$ holds. So the previous estimate
   gives for  $\alpha_t:= \frac{q}{t}-\frac{q}{\tilde s}=\min\{\frac{q}{t},p-1\}$ 
   \begin{align}\label{eq:estimate_f}
     \begin{aligned}
     \|f(\cdot,\chi(u))\|_t
     \leq C \|u\|_q^{\alpha_t}<\infty  \qquad\text{for all }t\in [t^*(q,s),\infty]. 
    \end{aligned} 
   \end{align}
   Now we have to choose $t\in [t^*(q,s),\infty]$ in such a way that the mapping properties of $\fR_\lambda$
   from Theorem~\ref{thm:LAP} ensure $\fR_\lambda(f(\cdot,\chi(u))) \in L^q(\R^n)$. In
   view of~\eqref{eq:admissible} we have to require 
   \begin{equation} \label{eq:admissible_t}
     \frac{nq}{n+2q}\leq t<\frac{(n+1)q}{n+1+2q}\qquad\text{and}\qquad t<\frac{2n}{n+1}.
   \end{equation}
   Since $q\in\Xi_{s,p}$ implies $q<\frac{2n}{(n-3)_+}$ and hence $\frac{nq}{n+2q}<\frac{2n}{n+1}$, 
   we can find such $t$  if and only if
   \begin{equation} \label{eq:tstarq}
     t^*(q,s)< \frac{(n+1)q}{n+1+2q}\qquad\text{and}\qquad t^*(q,s)<\frac{2n}{n+1}. 
   \end{equation}
   These two inequalities hold due to $q\in\Xi_{s,p}$. From this,
   Proposition~\ref{prop:Herglotz_waves} and Theorem~\ref{thm:LAP} we get  
   \begin{align*} 
     \|T(u,h)\|_q
     &\leq \|\fR_\lambda(f(\cdot,\chi(u)))\|_q +  \|\widehat{h \,d\sigma_\lambda}  \|_q \\
     &\leq C(\|f(\cdot,\chi(u))\|_t + \|h \|_{C^m} ) \\
     &\stackrel{\eqref{eq:estimate_f}}{\leq} C(\|u\|_q^{\alpha_t}  + \|h \|_{C^m}). 
   \end{align*}
   Since $t$ was chosen according to~\eqref{eq:admissible_t}, we have $t<q$ and thus
   $\alpha_t=\min\{p-1,\frac{q}{t}\}>1$.
   Moreover, from (A) and H\"older's inequality ($\frac{1}{t}=\frac{1}{\tilde
   s}+\frac{1}{q}+\frac{\alpha_t-1}{q}$) we get  
  \begin{align} \label{eq:contraction_estimate}
    \begin{aligned}
    \|f(\cdot,\chi(u))-f(\cdot,\chi(v)) \|_t  
    &\leq C\Big\||Q||\chi(u)-\chi(v)|(|\chi(u)|+|\chi(v)|)^{p-2}\Big\|_t   \\
    &\leq  C\|Q\|_{\tilde s} \|\chi(u)-\chi(v)\|_q \|
    (|\chi(u)|+|\chi(v)|)^{p-2}\|_{\frac{q}{\alpha_t-1}}   \\
    &\leq  C \|u-v\|_q \|(|u|+|v|)^{\alpha_t-1}\|_{\frac{q}{\alpha_t-1}}  \\
    &\leq   C \|u-v\|_q \big(  \|u\|_q+\|v\|_q\big)^{\alpha_t-1}.
  \end{aligned}
  \end{align}
  Here $p-2\geq \alpha_t-1>0$ was used. Hence we get 
  \begin{align*}
    \|T(u,h)-T(v,h)\|_q
    &\leq \|\fR_\lambda(f(\cdot,\chi(u)))-\fR_\lambda(f(\cdot,\chi(v)))\|_q \\ 
    &\leq C \|u-v\|_q  (\|u\|_q+\|v\|_q)^{\alpha_t-1},
  \end{align*}
  which finishes the proof.
 \end{proof}

  \medskip
  
  \noindent \textbf{Proof of Theorem~\ref{thm:existence_via_BanachFPT}:} 
  
  \medskip
  
  \noindent\textit{Step 1: Existence of a solution continuum $(u_h)$ in $L^q(\R^n)$:} 
  We apply Banach's Fixed Point Theorem to $T(\cdot,h)$ on a closed small ball around zero $\ov{B_\rho}\subset
  L^q(\R^n)$, $q\in\Xi_{s,p}\neq \emptyset$ and $h\in X^\delta$ with $\delta>0$ sufficiently small.
  From~\eqref{eq:estimate_selfmap} we get that $T(\cdot,h):\ov{B_\rho}\to \ov{B_\rho}$ is a contraction
  provided $\rho,\delta>0$ are chosen sufficiently small. It is even a
  uniform contraction  since its Lipschitz constant is  independent of $h$.   
  Moreover, $T$ is continuous with respect to the topology of $L^q(\R^n)\times
  C^m(S^{n-1}_\lambda;\C)$, which follows from Proposition~\ref{prop:Herglotz_waves} and
  Proposition~\ref{prop:estimates_welldefinedness}.
  Hence, Banach's Fixed Point Theorem for continuous uniform contractions yields a continuum of (uniquely determined) fixed points $u_h\in \ov{B_\rho}$ of
  $T(\cdot,h)$ for $h\in X^\delta$ provided $\delta>0$ is small enough.
  
  \medskip
  
  \noindent\textit{Step 2: The continuum property in $L^r(\R^n)$ for $r\in (q,\infty]$:}\; As a fixed point of
  $T(\cdot,h)$ the function $u_h$ solves $-\Delta u - \lambda u = f(x,\chi(u))$ in the strong sense on $\R^n$, see
  Theorem~\ref{thm:LAP}. Since $f(\cdot,\chi(u_h))$ is bounded, we even have $u_h\in W^{2,r}_{loc}(\R^n)$ for
  all $r\in [1,\infty)$. 
  Fixing now $\tilde q\geq q$ such that $\tilde q>\frac{n}{2}$ we get from Theorem~8.17 in \cite{GiTr}
  (Moser iteration) 
   \begin{align*}
    \|u_{h_1}-u_{h_2}\|_\infty
    &\leq \|u_{h_1}-u_{h_2}\|_q + \|f(\cdot,\chi(u_{h_1}))- f(\cdot,\chi(u_{h_2}))\|_{\tilde q} \\ 
    &\leq \|u_{h_1}-u_{h_2}\|_q + \|Q\|_\infty
    \|(|\chi(u_{h_1})|^{p-2}+|\chi(u_{h_2})|^{p-2})(\chi(u_{h_1})-\chi(u_{h_2}))\|_{\tilde q} \\
    &\leq \|u_{h_1}-u_{h_2}\|_q + 2^{p-2+\frac{\tilde q-q}{q}} \|Q\|_\infty
    \||u_{h_1}-u_{h_2}|^{q/\tilde q}\|_{\tilde q} \\
    &\leq C(\|u_{h_1}-u_{h_2}\|_q + \|u_{h_1}-u_{h_2}\|_q^{q/\tilde q})  
   \end{align*} 
   so that $(u_h)$ is also a continuum in $L^\infty(\R^n)$ and hence in $L^r(\R^n)$ for all $r\in (q,\infty]$.
   
   \medskip
   
   \noindent\textit{Step 3: The continuum property in $L^r(\R^n)$ for $r\in (\frac{2n}{n-1},q)$:}\; 
    From  $u_h\in L^q(\R^n)$ we deduce $f(\cdot,\chi(u_h))\in L^{t^*(q,s)}(\R^n)\cap L^\infty(\R^n)$ for
    $t^*(q,s)$ defined in~\eqref{eq:def_t*}. We set 
    $$
     \tilde q:= \max\Big\{r,\frac{(n+1)t^*(q,s)}{n+1-2t^*(q,s)}\Big\} 
    $$
    so that Theorem~\ref{thm:LAP} gives $u_h\in L^{\tilde q}(\R^n)$ since the tuple of exponents
    $(t^*(q,s),\tilde q)$  satisfies the inequalities~\eqref{eq:admissible}.  Notice that $q <
    \frac{s(n+1)(p-2)}{(2s-(n+1))_+}$ (because of $q\in\Xi_{s,p}$) implies $\tilde q<q$. 
     Moreover,  we have 
    \begin{align*}
     \|u_{h_1}-u_{h_2}\|_{\tilde q}
     &\leq \|\mathcal F( (h_1-h_2)\,d\sigma_\lambda)\|_{\tilde q} +  \|\mathfrak
     R_\lambda(f(\cdot,\chi(u_{h_1}))- f(\cdot,\chi(u_{h_2})))\|_{\tilde q} \\
     &\leq C \|h_1-h_2\|_{C^m}  + C \|f(\cdot,\chi(u_{h_1}))- f(\cdot,\chi(u_{h_2})))\|_{t^*(q,s)} \\
     &\stackrel{\eqref{eq:contraction_estimate}}{\leq} C (\|h_1-h_2\|_{C^m} + 
     \|u_{h_1}-u_{h_2}\|_q \big(\|u_{h_1}\|_q+\|u_{h_2}\|_q\big)^{p-2}) \\ 
     &\leq C( \|h_1-h_2\|_{C^m} + \| u_{h_1}-u_{h_2}\|_q).
    \end{align*}
    Taking now $\tilde q$ as the new $q$ and repeating the above arguments we get after finitely many
    steps $u_h\in L^r(\R^n)$ as well as 
    $$
     \|u_{h_1}-u_{h_2}\|_r \leq C (\|h_1-h_2\|_{C^m} +\|u_{h_1}-u_{h_2}\|_q). 
    $$ 
   Hence, $(u_h)$ is a continuum in $L^r(\R^n)$ for all $r\in (\frac{2n}{n-1},q)$.
   
   \medskip
   
   \noindent\textit{Step 4: The continuum property in $W^{2,r}(\R^n)$ for $r\in (\frac{2n}{n-1},\infty)$ and
   \eqref{eq:NLH}:}\; From step 2 and step 3 we get 
   $$
     -\Delta u_h + u_h = (1+\lambda)u_h+f(\cdot,\chi(u_h))\in L^r(\R^n)
   $$ 
   because of $|f(x,\chi(u_h))|\leq \|Q\|_\infty |u_h|\in L^r(\R^n)$. Bessel potential estimates imply
   $u_h\in W^{2,r}(\R^n)$ and as above one finds $\|u_{h_1}-u_{h_2}\|_{W^{2,r}(\R^n)} \leq C \|u_{h_1}-u_{h_2}\|_r$  so that the
   continuum property is proved. In particular,   $\|h\|_{C^m}\to 0$ implies
   $\|u_h\|_\infty=\|u_h-u_0\|_\infty \to 0$ and thus $\chi(u_h)=u_h$ is a $W^{2,r}(\R^n)$-solution
   of~\eqref{eq:NLH} for all $h\in X^\delta_\lambda$ provided $\delta>0$ is sufficiently small.   
  
 \medskip
  
  \noindent \textit{Step 5: Asymptotics of $u_h$:} From the previous steps we get 
  $u_h\in L^r(\R^n)$ for all $r\in (\frac{2n}{n-1},\infty]$ and this implies
  $|f(\cdot,u_h)|\leq Q|\chi(u_h)|^{p-1} \in L^t(\R^n)$ for $t> \frac{2ns}{2n+s(n-1)(p-1)}$. 
  In particular we have $f(\cdot,u_h)\in L^{\frac{2(n+1)}{n+3}}(\R^n)$ because
  $$
    p \stackrel{\eqref{eq:choice_of_ps}}{>} 
    \frac{2s(n^2+2n-1)-2n(n+1)}{(n^2-1)s}
    > \frac{s(2n^2+3n-1)-2n(n+1)}{(n^2-1)s}.
  $$
  Hence, Proposition~\ref{prop:LAP_asymptotics} yields   
  $$ 
      \lim_{R\to\infty} \frac{1}{R} \int_{B_R} \left| \fR_\lambda(f(\cdot,u_h))(x) - 
     \sqrt{\frac{\pi}{2\lambda}}\left(\frac{\sqrt\lambda}{|x|}\right)^{\frac{n-1}{2}}\Real\left(
     e^{i(\frac{n-3}{4}\pi-\sqrt\lambda|x|)}
     \widehat{f(\cdot,u_h)}(-\sqrt\lambda \hat x) \right)\right|^2\,dx
     = 0.
   $$
   Using $\ov{h(-\xi)}=h(\xi)$ and  Proposition~\ref{prop:Herglotz_waves} we moreover get
   \begin{align*}
     \lim_{R\to\infty} \frac{1}{R}\int_{B_R} \left|\widehat{h\,d\sigma_\lambda}(x) -  
     \sqrt{\frac{2}{\pi}} \left(\frac{\sqrt\lambda}{|x|}\right)^{\frac{n-1}{2}}
     \Real\left(e^{i(\frac{n-1}{4}\pi-\sqrt\lambda|x|)} h(\sqrt\lambda \hat x)\right)
     \right|^2\,dx = 0.
   \end{align*}
   So $u_h = T(u_h,h) = \widehat{h\,d\sigma_\lambda} + \fR_\lambda(f(\cdot,u_h))$ 
   and the above asymptotics imply~\eqref{eq:farfield}. Finally, in the case $p>\frac{s(3n-1)-2n}{(n-1)s}$ we
   have $V:=Q|u|^{p-2}\in L^t(\R^n)$ for some $t<\frac{2n}{n+1}$ because of $Q\in L^s(\R^n)$ and $u\in L^r(\R^n)$ for all
   $r>\frac{2n}{n-1}$. Hence, Lemma~2.9 in~\cite{EvWe_dual} yields the pointwise bounds if $n\geq 3$ and 
   Lemma~2.3 in~\cite{Eveq_plane} in the case $n=2$.
   
   \medskip
   
   \noindent \textit{Step 6: Distinguishing $u_{h_1},u_{h_2}$:} From Step 2 we deduce that $h_1\neq h_2$
   implies $u_{h_1}\neq u_{h_2}$. Indeed, assuming  $u_{h_1}=u_{h_2}$ we get  
   $T(u_{h_1},h_1)=T(u_{h_2},h_2)$ and thus $\mathcal F\big( (h_1-h_2)\,d\sigma_\lambda\big)=0$. We show that
   this implies $h_1=h_2$. To see this we use the scaling  property 
   $$
    \mathcal F(h\,d\sigma_\lambda)(x) =  \lambda^{\frac{n-1}{2}} 
    \mathcal F\big(h(\sqrt\lambda\cdot)\,d\sigma_1\big)(\sqrt\lambda x). 
  $$
  From Corollary~4.6 in~\cite{Agmon_Arepresentation} we infer
   \begin{align*}
     0
     &= \lim_{R\to\infty} \frac{1}{R} \int_{B_R} |\mathcal F\big((h_1-h_2)\,d\sigma_\lambda\big)|^2 \,dx \\
     &=  \lambda^{n-1}  \lim_{R\to\infty} \frac{1}{R} \int_{B_R}
     |\mathcal F\big((h_1-h_2)(\sqrt\lambda\cdot)\,d\sigma_1\big)(\sqrt\lambda x)|^2 \,dx \\
     &= \lambda^{\frac{n-1}{2}}\cdot \lim_{R\to\infty} \frac{1}{\sqrt\lambda R} \int_{B_{\sqrt\lambda R}}
     |\mathcal F\big((h_1-h_2)(\sqrt\lambda\cdot)\,d\sigma_1\big)(x)|^2 \,dx \\
     &= 2(2\pi)^{n-1} \lambda^{\frac{n-1}{2}} \int_{S^{n-1}} |(h_1-h_2)(\sqrt\lambda x)|^2\,d\sigma_1(x), 
   \end{align*}
   which implies $h_1=h_2$. \qed

  \medskip
  
  \begin{bem} \label{Remark2}
  \begin{itemize}
%
  \item[(a)] Let us describe how Theorem~\ref{thm:existence_via_BanachFPT} provides nonsymmetric solutions of
  symmetric nonlinear Helmholtz equations as mentioned in Remark~\ref{Remark1}(b). We assume $f(\gamma
  x,z)=f(x,z)$ for almost all $x\in\R^n$ and all $z\in\R,\gamma\in\Gamma$ where $\Gamma\subset O(n),\Gamma\neq \{\id\}$ is a subgroup. Since $\Gamma\neq
  \{\id\}$ we can find $h\in X^\delta$ and $\gamma\in\Gamma$ satisfying $h\neq h\circ\gamma$ and our claim is
  that this implies $u_h\neq u_h\circ\gamma$. Indeed, otherwise we would have
  \begin{align*}
    \widehat{h\,d\sigma_\lambda} + \fR_\lambda(f(\cdot,\chi(u_h)))  
    &=T(u_h,h) = u_h = u_h\circ\gamma = T(u_h,h)\circ\gamma \\ 
    &= \widehat{h\,d\sigma_\lambda}\circ \gamma + \fR_\lambda(f(\cdot,\chi(u_h))) \circ\gamma \\
    &= \widehat{h\circ \gamma\,d\sigma_\lambda} + \fR_\lambda(f(\cdot,\chi(u_h)) \circ\gamma) \\
    &= \widehat{h\circ \gamma\,d\sigma_\lambda} + \fR_\lambda(f(\cdot,\chi(u_h\circ\gamma)) ) \\
    &= \widehat{h\circ \gamma\,d\sigma_\lambda} + \fR_\lambda(f(\cdot,\chi(u_h))).
  \end{align*}
  From the second to the third line we used that $\fR_\lambda$ is a convolution operator with a radially
  symmetric and hence $\Gamma$-symmetric kernel and from the third to the fourth line we used that $f$ is
  $\Gamma$-invariant. So we conclude $\widehat{h\,d\sigma_\lambda} =
  \widehat{h\circ\gamma\,d\sigma_\lambda}$, which implies $h=h\circ\gamma$ as in Step 6 above, a
  contradiction. Hence, $u_h\neq u_h\circ\gamma$ so that $u_h$ is not $\Gamma$-symmetric.
   \item[(b)] In Remark~\ref{Remark1}(c) we claimed that Theorem~\ref{thm:existence_via_BanachFPT}
    provides radial solutions assuming the weaker condition~\eqref{eq:choicep_radial} instead of
    \eqref{eq:choice_of_ps}. This is due to an improved version of the resolvent estimates
    from~\eqref{eq:admissible} where in both lines $\frac{2}{n+1}\leq \frac{1}{t}-\frac{1}{q}$ can be
    replaced by $\frac{3n-1}{2n^2} <\frac{1}{t}-\frac{1}{q}$. This was demonstrated in Remark~3.1
    in~\cite{BoCaMa_4thorder}.
    Let us explain how these improved resolvent estimates allow to obtain radial solutions for a larger range
    of exponents.
    The only radially symmetric Herglotz waves $\widehat{h\,d\sigma_\lambda}$ are given by real-valued
    and constant densities $h$. So for $h\in\R$ we get that $T(\cdot,h)$ maps $L^q_{rad}(\R^n)$ into itself
    provided $f(x,u)=f_0(|x|,u)$ is radially symmetric. Here, the exponent $q$ may be chosen from 
    $$ 
      \Xi_{s,p}^{rad}:= \left\{q\in
      \left(\frac{2n}{n-1},\frac{2n}{(n-3)_+} \right) : \; q< \min\Big\{\frac{2n^2s(p-2)}{((3n-1)s-2n^2)_+}, 
      \frac{2ns(p-1)}{(s(n+1)-2n)_+}\Big\} \right\},  
    $$
    which now is nonempty due to~\eqref{eq:choicep_radial}. Replacing in
    Proposition~\ref{prop:estimates_welldefinedness} the first inequality in~\eqref{eq:tstarq} by 
    $t^*(q,s)<\frac{2n^2q}{2n^2+(3n-1)q}$ and redefining $\tilde q$ in the proof of Theorem~\ref{thm:existence_via_BanachFPT}
    accordingly, we get the desired existence result again from Banach's Fixed Point Theorem. 
  \item[(c)] Under severe restrictions on the nonlinearity our result
  may also be proved using dual variational methods originally developed by Ev\'{e}quoz and Weth
  \cite{EvWe_dual}.
  To demonstrate this we consider the special case $f(x,z)=|z|^{p-2}z$ with
  $\frac{2(n+1)}{n-1}<p<\frac{2n}{n-2}$.
  The dual functional $J_h:L^{p'}(\R^n)\to\R$ is then given by 
  $$ 
    J_h(v) := \frac{1}{p'}\int_{\R^n} |v|^{p'} - \int_{\R^n} v\cdot \mathcal F(h\,d\sigma_\lambda) 
      - \frac{1}{2} \pv \int_{\R^n} \frac{|\hat v(\xi)|^2}{|\xi|^2-\lambda}\,d\xi  
  $$
  and a local minimizer of $J_h$ lying in the interior of a small ball  may be shown
  to exist for $h\in X^\delta$ for $\delta>0$ sufficiently small using Ekeland's variational
  principle. Since every critical point $v_h$ of $J_h$ provides a fixed point  of $T(\cdot,h)$ vai
  $u_h:=|v_h|^{p'-2}v_h$, see Section~4 in~\cite{EvWe_dual}, we rediscover the solutions found in
  Theorem~\ref{thm:existence_via_BanachFPT}.
\end{itemize}
\end{bem}

 \section{Proof of Theorem~\ref{thm:4thorder}} \label{sec:ProofThm2}
 
 In this section we discuss how the above approach  
 needs to be modified in order to get solutions of the fourth order problem~\eqref{eq:4thorderNLH}.  

 \medskip
 
 We first consider the case (i) in~\eqref{eq:case_distinction}.
 From Theorem~\ref{thm:4thorder_resolvent} we know that there is a  resolvent-type operator
 $\bold{R}$ associated with $\Delta^2-\beta \Delta + \alpha$ which is linear and bounded between
 the same (and even more) pairs of Lebesgue spaces as $\fR_\lambda$. So all estimates in
 Proposition~\ref{prop:estimates_welldefinedness} involving $\fR_{\lambda}$ equally hold  for $\bold{R}$.
 As a replacement for the Herglotz wave of the Helmholtz operator we take again a Herglotz wave
 $\widehat{h\,d\sigma_\lambda}$ where now $h\in Y^\delta=X_\lambda^\delta$ and $\lambda$ was defined
 in~\eqref{eq:Ydelta} in dependence of $\alpha,\beta$. This definition of $\lambda$ was made in such a way
 that $\widehat{h\,d\sigma_\lambda}$ satisfies the homogeneous equation
 $\Delta^2\phi-\beta\Delta\phi+\alpha\phi=0$ because of $\lambda^2-\beta \lambda+\alpha=0$.
 As a consequence, also the Herglotz-wave part of the map 
 $$
   T(u,h) := \widehat{h\,d\sigma_\lambda} + \bold{R}(f(\cdot,\chi(u))),
 $$
 may be estimated as in Proposition~\ref{prop:estimates_welldefinedness}. So we can find  
 a fixed point of $T(\cdot,h)$ in a small ball of $L^q(\R^n)$ for $q\in \Xi_{s,p}$ exactly as in
 the proof of Theorem~\ref{thm:existence_via_BanachFPT}. The qualitative properties can also be proved the same way,
 see also Section~5 in~\cite{BoCaMa_4thorder} where $u_h\in W^{4,r}(\R^n)$ for all
 $r\in(\frac{2n}{n-1},\infty)$ as well as its pointwise decay rate was proved in the special case
 $f(x,z)=\Gamma(x)|z|^{p-2}z,\,\Gamma\in L^\infty(\R^n)$.
 
 \medskip
 
 In the case (ii) the proof is essentially the same. The only difference is that the fixed point operator now
 reads 
 $$
   T(u,h) := \widehat{h_1\,d\sigma_{\lambda_1}}+\widehat{h_2\,d\sigma_{\lambda_2}} +
   \bold{R}(f(\cdot,\chi(u))), 
 $$
 for $h=(h_1,h_2)\in Y^\delta = X_{\lambda_1}^\delta\times X_{\lambda_2}^\delta$. Besides that all arguments
 are identical and we conclude as above.~\qed
 
   
\section{Proof of Theorem~\ref{thm:NLM}} \label{sec:ProofThm3}

Theorem~\ref{thm:NLM} will be proved via the Contraction Mapping
Theorem on a small ball in $\R^3$ (part (i)) or on $\R^3$ (part (ii)). 
The reason for this is that the Limiting Absorption Principle in the latter
case is much weaker and forces us to consider nonlinear curl-curl equations with
nonlinearities that grow sublinearly at infinity.
Notice that the growth of the nonlinearity with respect to $E$ cannot be ignored by using a truncation  as
in our results proved above. In fact, an equivalent of local elliptic regularity theory and
in particular $(L^r,L^\infty)$-estimates for the curl-curl-operator are not known and may even be false.
 
\medskip

We start with a few words on the proof of part (i), which is very similar to the proof of
Theorem~\ref{thm:existence_via_BanachFPT} and Theorem~\ref{thm:4thorder}. So let $f$ satisfy (A'). For $q\in
\Xi_{s,p}$ defined in~\eqref{eq:choice_of_ps} (for $n=3$) we consider the map $T(\cdot,h):L^q_{cyl}(\R^3;\R^3)\to
L^q_{cyl}(\R^3;\R^3)$ where
\begin{align} \label{eq:defnT_NLM} 
  T(E,h) :=   \widehat{h\,d\sigma_\lambda} + R_\lambda (f(\cdot,\chi(|E|)E/|E|)).
\end{align}
Here, $L^q_{cyl}(\R^3;\R^3)$ denotes the Banach space of cylindrically symmetric functions
lying in $L^q(\R^3;\R^3)$ and the function $\chi\in C^\infty(\R)$ is chosen as before, i.e.,  it
satisfies $|\chi(z)|\leq \min\{|z|,1\}$ as well as $\chi(z)=z$ provided $|z|\leq \frac{1}{2}$.  
The map $T(\cdot,h)$ is well-defined for  $h\in Z_{cyl}^\delta, \delta>0$ since the
nonlinearity is compatible with cylindrical symmetry by (A').
By Proposition~\ref{prop:Herglotz_waves_NLM} the functions $\widehat{h\,d\sigma_\lambda}$ are smooth
cylindrically symmetric solutions of $\nabla\times\nabla\times E-\lambda E=0$ and satisfy the same estimates
as their scalar counterparts used in the proof of Theorem~\ref{thm:existence_via_BanachFPT}.
Likewise, Theorem~\ref{thm:LAP_NLM} implies that $R_\lambda$ restricted to the space of cylindrically
symmetric functions has the same $L^p-L^q$ mapping properties as  $\fR_\lambda$. Moreover, 
not only $f$ but also the function $(x,E)\mapsto f(x,\chi(|E|)E/|E|)$ satisfies (A') because
$ z\mapsto \chi(z)/z $ is smooth. So the operator $T$ defined in~\eqref{eq:defnT_NLM} also
satisfies the estimates from Proposition~\ref{prop:estimates_welldefinedness} and one obtains a unique fixed
point $E_h$ of $T(\cdot,h)$ on a small ball in $L^q_{cyl}(\R^3;\R^3)$ via the Contraction Mapping Theorem. 
As a cylindrically symmetric and hence divergence-free solution of~\eqref{eq:NLM} the vector field $E_h$ even
solves the elliptic system~\eqref{eq:NLM_cylindrical} so that
elliptic regularity theory implies $\chi(E_h)=E_h$ provided the ball in $L^q_{cyl}(\R^3;\R^3)$ and
$h\in Z_{cyl}^\delta, \delta>0$ are chosen small enough. Also the estimate $|E_h(x)|\leq
C_h(1+|x|)^{\frac{1-n}{2}}$ is proved as in step 5 of the proof of Theorem~\ref{thm:existence_via_BanachFPT}  under the assumption
$p>\frac{s(3n-1)-2n}{(n-1)s}$.  
 
\medskip 

From now on we prove part (ii), so let $f$ satisfy assumption (B). We fix an exponent $q$ such that  
$3<q<\frac{3s}{(2s-3)_+}$,  which is possible due to $s\in [1,2]$. For $h\in Z$ we set 
\begin{align} \label{eq:defnT_NLM2}
  T(E,h) :=   \widehat{h\,d\sigma_\lambda} + R_\lambda (f(\cdot,E)).
\end{align}
We first verify that $T(\cdot,h):L^q(\R^3;\R^3)\to L^q(\R^3;\R^3)$ is well-defined and Lipschitz continuous. 
In the proof of these estimates we use the number
$$
  \alpha_{p,\tilde p} :=  \sup_{z\in\R} |z|^{p-2}(1+|z|)^{\tilde p-p} = \frac{(p-2)^{p-2}(2-\tilde p)^{2-\tilde
  p}}{(p-\tilde p)^{p-\tilde p}} 
  \qquad\text{where }0^0:=1 \text{ and }\tilde p\leq 2\leq p. 
$$
 
 \begin{prop}\label{prop:estimates_welldefinedness_NLM}
    Assume (B) and $\lambda>0,h\in Z$. Then the map $T(\cdot,h):L^q(\R^3;\R^3) \to L^q(\R^3;\R^3)$
   from~\eqref{eq:defnT_NLM2} is well-defined with  
   \begin{align}\label{eq:estimate_selfmap_NLM}
     \begin{aligned}
      \|T(E_1,h)-T(E_2,h)\|_q \leq C\alpha_{p,\tilde p}(\|Q\|_s+\|Q\|_\infty)\|E_1-E_2\|_q. 
   \end{aligned}
   \end{align}
   where $C$ only depends on $q$ and $s$.
 \end{prop}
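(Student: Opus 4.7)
The plan is to exploit the fact that the Herglotz part of $T$ is independent of $E$, so when forming the difference the only contribution comes from $R_\lambda$ applied to a difference of nonlinearities. By Theorem~\ref{thm:LAP_NLM}, this amounts to bounding $f(\cdot,E_1)-f(\cdot,E_2)$ simultaneously in $L^q$ and in a suitable $L^t$, the latter with $(t,q)$ in the admissible range \eqref{eq:admissible}.

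First I would observe that the quantitative definition of $\alpha_{p,\tilde p}$ gives the pointwise bound
\begin{equation*}
  |f(x,E_1)-f(x,E_2)| \leq \alpha_{p,\tilde p}\, Q(x)\,|E_1-E_2|,
\end{equation*}
since the function $z\mapsto z^{p-2}(1+z)^{\tilde p-p}$ evaluated at $z=|E_1|+|E_2|\geq 0$ is bounded by its supremum $\alpha_{p,\tilde p}$. Plugging in $E_2=0$ and using $f(\cdot,0)=0$ (which follows from $p\geq 2$ in (B)) and Proposition~\ref{prop:Herglotz_waves_NLM} shows that $T(\cdot,h)$ is well-defined on $L^q(\R^3;\R^3)$, so it suffices to prove the Lipschitz estimate \eqref{eq:estimate_selfmap_NLM}.

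Next, I would use H\"older with exponents $\tfrac{1}{t}=\tfrac{1}{\tilde s}+\tfrac{1}{q}$ to get
\begin{equation*}
  \|f(\cdot,E_1)-f(\cdot,E_2)\|_t \leq \alpha_{p,\tilde p}\|Q\|_{\tilde s}\|E_1-E_2\|_q,
  \qquad
  \|f(\cdot,E_1)-f(\cdot,E_2)\|_q \leq \alpha_{p,\tilde p}\|Q\|_{\infty}\|E_1-E_2\|_q,
\end{equation*}
and then interpolate $\|Q\|_{\tilde s}\leq \|Q\|_{s}^{s/\tilde s}\|Q\|_\infty^{1-s/\tilde s}\leq \|Q\|_s+\|Q\|_\infty$ valid for any $\tilde s\in[s,\infty]$. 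The crux of the proof is then to choose $\tilde s$ so that the resulting $t$ lies in the admissibility range of Theorem~\ref{thm:LAP_NLM} (i.e.\ \eqref{eq:admissible} with $n=3$), which is where the hypothesis $3<q<\tfrac{3s}{(2s-3)_+}$ and $s\in[1,2]$ enter. Concretely, if $s\leq 3/2$ I would take $\tilde s=3/2$, yielding $\tfrac{1}{t}-\tfrac{1}{q}=\tfrac{2}{3}$, which saturates the upper admissibility bound $\tfrac{2}{n}$; if $3/2<s\leq 2$ I would take $\tilde s=s$, so that $\tfrac{1}{t}-\tfrac{1}{q}=\tfrac{1}{s}\in[\tfrac{1}{2},\tfrac{2}{3}]$, and then the constraint $\tfrac{1}{t}>\tfrac{n+1}{2n}=\tfrac{2}{3}$ becomes $q<\tfrac{3s}{2s-3}$, exactly the upper bound on $q$ in the theorem statement.

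With such a choice of $t$ fixed, Theorem~\ref{thm:LAP_NLM} applied to $G:=f(\cdot,E_1)-f(\cdot,E_2)$ gives $\|R_\lambda G\|_q\leq C(\|G\|_q+\|G\|_t)$, and combining this with the two estimates above yields \eqref{eq:estimate_selfmap_NLM}. The main obstacle is the bookkeeping in verifying admissibility in both regimes of $s$; once that case distinction is set up correctly, the rest is a routine H\"older/interpolation argument, and the constant $C$ depends only on $q$ and $s$ through the admissibility of the chosen tuple.
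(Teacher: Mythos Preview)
Your proof is correct and follows essentially the same route as the paper: the paper sets $\tilde s:=\max\{s,\tfrac{3}{2}\}$ and $t:=\tfrac{\tilde s q}{\tilde s+q}$, which is precisely your case distinction written compactly, and then combines the pointwise bound $|f(x,E_1)-f(x,E_2)|\leq \alpha_{p,\tilde p}Q(x)|E_1-E_2|$ with H\"older and the interpolation $\|Q\|_{\tilde s}\leq \|Q\|_s+\|Q\|_\infty$ exactly as you do. The verification that $(t,q)$ satisfies \eqref{eq:admissible} and the application of Theorem~\ref{thm:LAP_NLM} are identical.
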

 \begin{proof}
   By Proposition~\ref{prop:Herglotz_waves_NLM} the functions $\widehat{h\,d\sigma_\lambda}$ belong to
   $L^q(\R^3;\R^3)$ for all $h\in Z$. So the definition of $T$ from~\eqref{eq:defnT_NLM2} and the Limiting
   Absorption Principle for the curl-curl operator (Theorem~\ref{thm:LAP_NLM}) imply that
   $T(\cdot,h)$ is well-defined if we can show $f(\cdot,E)\in L^q(\R^3;\R^3)\cap L^t(\R^3;\R^3)$ for some
   $t\in (1,\infty)$ satisfying~\eqref{eq:admissible}. To verify this we set
   $\tilde s:=\max\{s,\frac{3}{2}\}$ for $s\in [1,2]$ as in assumption (B) and choose $t:=\frac{\tilde
   sq}{\tilde s+q}$. This implies $1<t<\frac{3}{2}, \frac{3}{2}\leq  \tilde s \leq  2$  so that $(t,q)$
   indeed satisfies~\eqref{eq:admissible}. 
   So we infer from assumption (B)
   \begin{align*}
     \|f(\cdot,E)\|_q
     &\leq \|Q  |E|^{p-1}(1+|E|)^{\tilde p-p}\|_q
     \leq \alpha_{p,\tilde p} \|Q  |E|\|_q   
     \leq  \alpha_{p,\tilde p} \|Q\|_\infty  \| E\|_q
     < \infty, \\
     \|f(\cdot,E)\|_t
     &\leq \|Q  |E|^{p-1}(1+|E|)^{\tilde p-p}\|_t
     \leq \alpha_{p,\tilde p} \|Q  |E|\|_t   
     \leq  \alpha_{p,\tilde p} \|Q\|_{\tilde s}  \|E\|_q   
     < \infty.  
   \end{align*}
   This implies that $T(\cdot,h)$ is well-defined. Moreover, we have
   \begin{align*} 
     \|f(\cdot,E_1)-f(\cdot,E_2)\|_q
      &\leq  \alpha_{p,\tilde p} \|Q\|_\infty   \|E_1-E_2\|_q,  \\
     \|f(\cdot,E_1)-f(\cdot,E_2)\|_t
      &\leq  \alpha_{p,\tilde p} \|Q\|_{\tilde s}   \|E_1-E_2\|_q    
      \leq   \alpha_{p,\tilde p}(\|Q\|_s+\|Q\|_\infty)  \| E_1-E_2\|_q. 
   \end{align*}
   Combining these estimates with Theorem~\ref{thm:LAP_NLM} one gets \eqref{eq:estimate_selfmap_NLM} from the
   definition of $T(\cdot,h)$.
%
%
 \end{proof} 
 
 \medskip
   
 \noindent
 \textbf{Proof of Theorem~\ref{thm:NLM}~(ii):}\;
 From the previous proposition we get that $T(\cdot,h)$ maps $L^q(\R^3;\R^3)$ into itself and it is a
 contraction provided $\alpha_{p,\tilde p}(\|Q\|_s+\|Q\|_\infty)$ is small enough, which is guaranteed by the
 assumptions of the Theorem~\ref{thm:NLM}.
  So for any given $h\in Z$ the Contraction Mapping Theorem and Theorem~\ref{thm:LAP_NLM} provide a unique
  weak solution $E_h\in H_{loc}(\curl;\R^3)\cap L^q(\R^3;\R^3)$ of~\eqref{eq:NLH}. It remains to discuss the integrability properties
 of $E_h$. In this discussion we will w.l.o.g. assume that assumption (B) holds with $\tilde p\in (1,2]$
 because otherwise we may simply increase $\tilde p$.
 
 \medskip
%
 
 \noindent\textit{Proof of (ii)(a):}\;
 Under the additional assumption $(p,s)\neq (2,2)$ we want to show $E_h\in L^r(\R^3;\R^3)$ for all $r\in
 (3,q)$. To achieve this iteratively we use $E_h\in L^q(\R^3;\R^3)$ and hence, by assumption (B),
 $$
   f(\cdot,E_h)\in L^t(\R^3;\R^3)\cap L^{\tilde q}(\R^3;\R^3)
   \qquad\text{for all }t,\tilde q\in \left[t^*(q,s),\frac{q}{\tilde p-1}\right].
 $$ 
 This follows as in~\eqref{eq:def_t*}, where also $t^*(q,s)$ is defined. 
 In order to prove $E_h\in L^{\tilde q}(\R^3;\R^3)$ with $\tilde q<q$ we use $E_h=T(E_h,h)=
 \widehat{h\,d\sigma_\lambda} + R_\lambda (f(\cdot,E_h))$. In view of
 Proposition~\ref{prop:Herglotz_waves_NLM} and the mapping properties of $R_\lambda$ from
 Theorem~\ref{thm:LAP_NLM} we obtain $E_h\in L^{\tilde q}(\R^3;\R^3)$ with $\tilde q<q$ provided the pair 
 $(t,\tilde q)$ satisfies $1<t,\tilde q<\infty$ as well as the inequalities
 from~\eqref{eq:admissible} in the three-dimensional case $n=3$. In other words we require  
 \begin{equation} \label{eq:NLM_iteration_condition_tq}
   1<t<\frac{3}{2},\qquad 3<\tilde q<q,\qquad \frac{1}{2}\leq \frac{1}{t}-\frac{1}{\tilde q}\leq
   \frac{2}{3},\qquad t^*(q,s)\leq t,\tilde q \leq \frac{q}{\tilde p-1}.
 \end{equation}
 So we set $\tilde q:=\max\{r,\frac{2t}{2-t},t^*(q,s)\}=\max\{r,\frac{2t}{2-t}\}\geq r>3$. (Notice that our
 choice for $t$ will ensure $t^*(q,s)<t<\frac{3}{2}<3<r$.) Plugging in the definition of $t^*(q,s)$
 from~\eqref{eq:def_t*} we obtain after some calculations that the  
 inequalities~\eqref{eq:NLM_iteration_condition_tq} hold if
 \begin{equation*} 
   \max\Big\{1, \frac{3r}{3+2r}, \frac{qs}{q+s(p-1)} \Big\} < t < \min\Big\{ \frac{3}{2}, \frac{2q}{q+2},
   \frac{q}{\tilde p-1}\Big\}.
 \end{equation*}
 Here non-strict inequalities in~\eqref{eq:NLM_iteration_condition_tq} were sharpened
 to strict inequalities for notational convenience. Such a choice for $t$ is possible 
 because of $1\leq s\leq 2\leq p, (p,s)\neq (2,2)$ and $3<r<q< <
 \frac{3s}{(2s-3)_+}<\frac{3s(p-1)}{(2s-3)_+}$.
 For instance we may choose 
 $$ 
   t=t_q := \frac{1}{2}\cdot \left(  \max\Big\{1, \frac{3r}{3+2r}, \frac{qs}{q+s(p-1)} \Big\} + 
   \min\Big\{ \frac{3}{2}, \frac{2q}{q+2}, \frac{q}{\tilde p-1}\Big\} \right) 
 $$
 and Theorem~\ref{thm:LAP_NLM} implies $E_h \in  L^{\tilde q}(\R^3;\R^3)$. In the case $\tilde q=r$
 we are done. Otherwise, we may repeat this argument replacing $q$ by $\tilde q$ so that $r\in (3,\tilde q)$.
 The corresponding iteration yields $E_h\in L^r(\R^3;\R^3)$ after finitely many steps.
 
 \medskip
 
 \noindent\textit{Proof of (ii)(b):}\; Using $\tilde p<2<p$ we now prove $E_h\in L^r(\R^3;\R^3)$ for
 all $r\in (q,\frac{3s(p-1)}{(2s-3)_+})$. In view of (B) and $E_h\in L^q(\R^3;\R^3)$ we now have to choose
 $t,\tilde q \in [t^*(q,s),\frac{q}{\tilde p-1}]$ with $\tilde q>q$ such that the pair $(t,\tilde q)$
 satisfies $1<t,\tilde q<\infty$ as well as 
 $$
   1<t<\frac{3}{2},\qquad q<\tilde q<\infty,\qquad \frac{1}{2}\leq \frac{1}{t}-\frac{1}{\tilde q}\leq
   \frac{2}{3},\qquad t^*(q,s)\leq t,\tilde q \leq \frac{q}{\tilde p-1}.
 $$
 So we set $\tilde q:= \min\{\frac{3t}{3-2t},\frac{q}{\tilde p-1}\}$ and due to $1<\tilde p<2$ it remains to
 choose $t$ such that
  \begin{equation*} 
   \max\Big\{1, \frac{3q}{3+2q}, \frac{qs}{q+s(p-1)} \Big\} < t < \min\Big\{ \frac{3}{2},
   \frac{2q}{q+2(\tilde p-1)}, \frac{q}{\tilde p-1}\Big\}.
 \end{equation*}
 Such a choice is possible thanks to $q<\frac{3s(p-1)}{(2s-3)_+}$ and $1\leq s\leq 2,1<\tilde p<2$.
 For instance we may choose 
 $$
   t=t_q := \frac{1}{2}\cdot \left(  
   \max\Big\{1, \frac{3q}{3+2q}, \frac{qs}{q+s(p-1)} \Big\}  + \min\Big\{ \frac{3}{2},
   \frac{2q}{q+2(\tilde p-1)}, \frac{q}{\tilde p-1}\Big\}
   \right). 
 $$
 Then Theorem~\ref{thm:LAP_NLM} implies $E_h \in  L^{\tilde q}(\R^3;\R^3)$ and we have $\tilde q>q$. We may
 repeat this argument as long as  $\tilde q<\frac{3s(p-1)}{(2s-3)_+}$ and thereby obtain $u\in
 L^r(\R^3;\R^3)$ for all  $r\in (q,\frac{3s(p-1)}{(2s-3)_+})$, which finishes the proof.~\qed

\section{Appendix A: Proof of Proposition~\ref{prop:Herglotz_waves}}

  In this section we give the proof of Proposition~\ref{prop:Herglotz_waves}  which we repeat for
  convenience. 
  
   \begin{prop*} 
   Let $n\in\N,n\geq 2$ and $m:= \lfloor \frac{n-1}{2}\rfloor+1$. Then for all $h\in
   C^m(S_\lambda^{n-1};\C)$ the Herglotz wave $\widehat{h\,d\sigma_\lambda}$ is an analytic solution of
   $-\Delta\phi-\lambda\phi=0$ in $\R^n$ and satisfies the estimate $|(\widehat{h\,d\sigma_\lambda)}(x)|\leq
   C\|h\|_{C^m}(1+|x|)^{\frac{1-n}{2}}$ as well as 
   \begin{equation*} 
     \lim_{R\to\infty} \frac{1}{R} \int_{B_R} \left| \widehat{h\,d\sigma_\lambda}(x) - 
      \frac{1}{\sqrt{2\pi}} \left(\frac{\sqrt\lambda}{|x|}\right)^{\frac{n-1}{2}}      
      m_h(x)  \right|^2\,dx
       = 0.
   \end{equation*}
   In particular, we have $\|\widehat{h\,d\sigma_\lambda}\|_r\leq C_r \|h\|_{C^m}$ for all $r>\frac{2n}{n-1}$. 
 \end{prop*}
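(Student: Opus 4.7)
The analyticity of $\widehat{h\,d\sigma_\lambda}$ and the fact that it solves $-\Delta\phi-\lambda\phi=0$ are immediate from differentiating under the integral sign: the function $x\mapsto e^{-i\langle x,\xi\rangle}$ extends entirely to $\C^n$ with derivatives uniformly bounded on compact subsets of $\C^n$ uniformly in $\xi\in S_\lambda^{n-1}$, and $(-\Delta-\lambda)e^{-i\langle x,\xi\rangle}=(|\xi|^2-\lambda)e^{-i\langle x,\xi\rangle}$ vanishes for $\xi\in S_\lambda^{n-1}$.

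The core task is the asymptotic analysis of the oscillatory integral
$$
  I(x):=\int_{S_\lambda^{n-1}} h(\xi)e^{-i\langle x,\xi\rangle}\,d\sigma_\lambda(\xi)
$$
as $|x|\to\infty$; the case $|x|\leq 1$ is trivial from the sup-norm estimate on $h$. The phase $\xi\mapsto-\langle x,\xi\rangle$ restricted to $S_\lambda^{n-1}$ has exactly two non-degenerate critical points $\xi_\pm=\pm\sqrt\lambda\,\hat x$, with tangential Hessians $\pm(|x|/\sqrt\lambda)\,I_{n-1}$, signatures $\pm(n-1)$ and absolute determinant $(|x|/\sqrt\lambda)^{n-1}$. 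A Morse-type computation shows that the one-term stationary phase contribution equals exactly $(2\pi)^{(n-1)/2}\lambda^{(n-1)/4}|x|^{-(n-1)/2}m_h(x)$, which after dividing by $(2\pi)^{n/2}$ reproduces precisely the leading term in the statement.

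The main obstacle is that classical stationary phase usually requires smooth densities, whereas we only assume $h\in C^m$ with $m=\lfloor(n-1)/2\rfloor+1$. I would handle this by fixing a smooth partition of unity $1=\chi_++\chi_-+\chi_0$ on $S_\lambda^{n-1}$ with $\chi_\pm$ supported in small neighbourhoods of $\xi_\pm$ and $\chi_0$ vanishing there. On $\supp\chi_0$ the tangential gradient of the phase has modulus bounded below by $c|x|$, so $m$ iterations of the standard non-stationary-phase integration by parts on the compact manifold $S_\lambda^{n-1}$ produce a contribution of size $C|x|^{-m}\|h\|_{C^m}\leq C|x|^{-(n+1)/2}\|h\|_{C^m}$. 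On $\supp\chi_\pm$, after pulling back to tangent-plane coordinates and applying a Morse substitution, the same conclusion follows from a quantitative one-term stationary-phase lemma (valid for $C^m$ symbols with $m>(n-1)/2$), yielding the explicit leading term above plus an error of order $|x|^{-(n+1)/2}\|h\|_{C^m}$.

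From this pointwise description, the bound $|\widehat{h\,d\sigma_\lambda}(x)|\leq C\|h\|_{C^m}(1+|x|)^{(1-n)/2}$ is immediate. For the $L^2$-averaged limit, denoting the pointwise error by $E_h(x)$, the bound $|E_h(x)|\leq C\|h\|_{C^m}(1+|x|)^{-(n+1)/2}$ gives $\int_{B_R}|E_h|^2\,dx\leq C\int_1^R \rho^{-2}\,d\rho+O(1)=O(1)$, so dividing by $R$ sends the quantity to $0$. Finally, the $L^r$-bound for $r>\frac{2n}{n-1}$ follows by integrating the pointwise decay in polar coordinates, since $\int_1^\infty \rho^{\,n-1-r(n-1)/2}\,d\rho$ converges precisely when $r(n-1)/2>n$, while the contribution from $|x|\leq 1$ is controlled by $C\|h\|_{C^m}$.
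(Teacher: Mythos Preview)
Your approach is essentially the paper's: rotate so that the stationary points of the phase are $\pm\sqrt\lambda\,e_n$, use a partition of unity to isolate them, integrate by parts $m$ times on the non-stationary pieces, and apply a quantitative one-term stationary phase estimate on the remaining pieces. The paper uses a $2n$-piece coordinate-chart partition (so that each piece admits a global graph parametrization) and cites an explicit $H^s$-version of stationary phase, while you use a three-piece conceptual partition; this is only a cosmetic difference.

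There is, however, a quantitative slip. You assert $|x|^{-m}\le C|x|^{-(n+1)/2}$ and claim the stationary-phase remainder is $O(|x|^{-(n+1)/2}\|h\|_{C^m})$. For \emph{even} $n$ one has $m=\lfloor(n-1)/2\rfloor+1=n/2<(n+1)/2$, so the first inequality is false for large $|x|$; the non-stationary contribution is only $O(|x|^{-n/2})$. Likewise, with only $C^m$ regularity and $m-(n-1)/2=\tfrac12$ in even dimensions, the one-term stationary phase remainder is generically weaker than $O(|x|^{-(n+1)/2})$. The paper is more careful here and only claims a remainder $O(|x|^{(1-n)/2-\alpha}\|h\|_{C^m})$ for any $\alpha\in(0,\tfrac14)$, obtained from a Sobolev-scale stationary phase lemma applied with parameter $s=m-2\alpha>(n-1)/2$. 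This weaker bound is all that is needed: in your averaged-$L^2$ computation, $\int_1^R\rho^{-2\alpha}\,d\rho=O(R^{1-2\alpha})$ still gives $\frac{1}{R}\int_{B_R}|E_h|^2\,dx\to 0$ for any $\alpha>0$, so your conclusion survives once the exponent is corrected.
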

  
  The asymptotics of the functions $\widehat{h\,d\sigma_\lambda}$ are proved using
  the method of stationary phase, but typically it is assumed that $h$ is smooth, see for
  instance Proposition 4,5,6 in Chapter VIII\S2 of~\cite{Ste_harmonic_analysis} or page~6-7 in
  \cite{Gut_nontrivial}. In spirit, the above result is not new, but we could not find a reference for it
  covering all space dimensions $n\geq 2$ with an explicit value for $m$. For this reason, we decided to
  present a proof here.
  
  \medskip\medskip
 
  \noindent\textbf{Proof of Proposition~\ref{prop:Herglotz_waves}:}\;
  We consider the Herglotz wave 
  $$
    \widehat{h\,d\sigma_\lambda}(x) = \frac{1}{(2\pi)^{n/2}} \int_{S_\lambda^{n-1}}
    h(\xi)e^{-i\skp{x}{\xi}}\,d\sigma_\lambda(\xi). 
  $$
  To investigate its pointwise behaviour as $|x|\to\infty$ let $Q=Q_x\in O(n)$ satisfy $Q^Tx=|x|e_n$, so  
  $$
    \widehat{h\,d\sigma_\lambda}(x) 
    = \frac{1}{(2\pi)^{n/2}} \int_{S_\lambda^{n-1}}  h(Q\xi)e^{-i|x|\xi_n}\,d\sigma_\lambda(\xi). 
  $$
  Now we choose $\eta_1,\ldots\eta_{2n} \in C^\infty(\R^n)$ such that $\eta_1+\ldots+\eta_{2n}=1$ on
  $S_\lambda^{n-1}$ and, for $k=1,\ldots,n$,
  \begin{align*} 
    \supp(\eta_{2k-1}) &\subset \{\xi=(\xi_1,\ldots,\xi_n)\in \R^n: \xi_k>+\sqrt\lambda\delta\},\\
    \supp(\eta_{2k}) &\subset \{\xi=(\xi_1,\ldots,\xi_n)\in \R^n: \xi_k< -\sqrt\lambda\delta\},
  \end{align*} 
  where $\delta\in (0,\frac{1}{\sqrt n})$ is fixed.  Notice that such a partition of unity exists since the
  open sets on the right hand side cover the sphere $S_\lambda^{n-1}$. We define
  $h_j(\xi):=(2\pi)^{-n/2} h(Q\xi)\eta_j(\xi)$ so that we have to investigate the integrals
  $$
    I_j := \int_{S_\lambda^{n-1}} h_j(\xi)e^{-i|x|\xi_n}\,d\sigma_\lambda(\xi)
    \qquad (j=1,\ldots,2n).
  $$ 
  
  \medskip
  
  We first estimate the oscillatory integrals $I_1,\ldots,I_{2n-2}$ where the resonant poles $\pm e_n$ are cut
  out by the choice of $\eta_1,\ldots,\eta_{2n-2}$. To estimate $I_j$ we use the local parametrization given by 
  \begin{align*}
    \text{If } j=2k-1 &:\quad \psi_j(\xi'):=\sqrt\lambda
    (\xi_1,\ldots,\xi_{k-1},+\sqrt{1-|\xi'|^2},\xi_k,\ldots,\xi_{n-1})  \;\in S^{n-1}_\lambda  \\
    \text{If } j=2k\quad\;\,  &:\quad \psi_j(\xi'):=\sqrt\lambda
    (\xi_1,\ldots,\xi_{k-1},-\sqrt{1-|\xi'|^2},\xi_k,\ldots,\xi_{n-1}) \;\in S^{n-1}_\lambda 
  \end{align*} 
  for $\xi':=(\xi_1,\ldots,\xi_{n-1})$ belonging to the unit ball $B \subset\R^{n-1}$. The function
  $$
    H_j(\xi'):= \lambda^{\frac{n-1}{2}} h_j(\psi_j(\xi'))(1-|\xi'|^2)^{-1/2}
  $$ 
  then satisfies $\supp(H_j)\subset B$ so that integration by parts  yields for all $|x|\geq 1$ and
  $j=1,\ldots,2n-2$
  \begin{align*}
     |I_j| 
     &= \left|\int_{B} H_j(\xi') e^{-i\sqrt\lambda|x|\xi_{n-1}}\,d\xi' \right|  \\
     &=  \left| \int_{\R^{n-1}} H_j(\xi') 
      \frac{\partial^m}{\partial (\xi_{n-1})^m} \Big( e^{- i\sqrt\lambda|x|\xi_{n-1}} \Big) \,d\xi'\right|
      \cdot |\sqrt\lambda x|^{-m}   \\
     &= \left| \int_{\R^{n-1}} \left(\frac{\partial^m}{\partial (\xi_{n-1})^m} H_j(\xi') \right) e^{-
     i\sqrt\lambda|x|\xi_{n-1}}\,d\xi'\right| \cdot |\sqrt\lambda x|^{-m}   \\
     &=   \int_{B} |\nabla^m H_j(\xi')| \,d\xi'  \cdot |\sqrt\lambda x|^{-m} \\
     &\leq C \|h\|_{C^m}  \cdot |x|^{-m}     
  \end{align*}
  Since the estimate for $|x|\leq 1$ is trivial and $m=\lfloor \frac{n-1}{2}\rfloor + 1 \geq \frac{n}{2}
  $, we conclude
  \begin{equation}\label{eq:stationary_phase_I}
    |I_j| \leq C \|h\|_{C^m}    (1+|x|)^{\frac{1-n}{2}-\alpha}  \quad\text{for all
    }x\in\R^n,\;j=1,\ldots,2n-2,\; \alpha\in (0,\frac{1}{4}).
 \end{equation}
   
  \medskip
  
  Next we analyze the integrals $I_{2n-1},I_{2n-2}$. With $\psi_j,H_j$ as above we define
  $$
    H_j^*(\eta):= H_j(\eta\sqrt{2-|\eta|^2})\cdot (2-2|\eta|^2)(2-|\eta|^2)^{\frac{n-3}{2}}
    \qquad (j=2n-1,2n).
  $$ 
  Again the supports of $H_j,H_j^*$ are contained in the interior of the unit ball $B$
  so that neither function is singular. Performing twice a change of coordinates we  get
 \begin{align*}
   I_{2n-1}  
   &= \int_{\R^{n-1}} H_{2n-1}(\xi') e^{-i\sqrt\lambda |x|\sqrt{1-|\xi'|^2}}  \,d\xi' 
   =  e^{-i\sqrt\lambda |x|} \int_{\R^{n-1}} H^*_{2n-1}(\eta) e^{i\sqrt\lambda |x||\eta|^2}  \,d\eta,  \\
   I_{2n}  
   &= \int_{\R^{n-1}} H_{2n}(\xi') e^{+i\sqrt\lambda |x|\sqrt{1-|\xi'|^2}}  \,d\xi' 
   =  e^{+i\sqrt\lambda |x|} \int_{\R^{n-1}} H^*_{2n}(\eta) e^{-i\sqrt\lambda |x||\eta|^2}  \,d\eta.  
 \end{align*}
 The integrals may be estimated using Proposition~11 in~\cite{Man_LAPperiodic} for
 $s:=m-2\alpha$. Notice that $\alpha\in (0,\frac{1}{4})$ ensures $s\geq \frac{n+1}{2}-2\alpha
 >\frac{n-1}{2}$ so that the estimate from this proposition is valid. 
 Using
 \begin{align*}
   H_{2n-1}^*(0) 
   &= 2^{\frac{n-1}{2}}H_{2n-1}(0) 
   = (2\lambda)^{\frac{n-1}{2}} h_{2n-1}(+\sqrt\lambda e_n)
   = \frac{1}{\sqrt{2\pi}} \left(\frac{\lambda}{\pi}\right)^{\frac{n-1}{2}} h(+\sqrt\lambda \hat x), \\
   H_{2n}^*(0) 
   &= 2^{\frac{n-1}{2}}H_{2n}(0) 
   = (2\lambda)^{\frac{n-1}{2}} h_{2n}(-\sqrt\lambda e_n)
   = \frac{1}{\sqrt{2\pi}} \left(\frac{\lambda}{\pi}\right)^{\frac{n-1}{2}} h(-\sqrt\lambda \hat x) 
 \end{align*}
 as well as $\|H_j\|_{H^m(\R^{n-1})}\leq C\|h\|_{C^m}$ for $j\in\{2n-1,2n\}$ we deduce from the first
 inequality in Proposition~11~\cite{Man_LAPperiodic}
 \begin{align}\label{eq:stationary_phase_II}
 \begin{aligned}
   \left|  
   I_{2n-1}  
   - e^{i(\frac{n-1}{4}\pi-\sqrt\lambda |x|)}
   \frac{1}{\sqrt{2\pi}} \left(\frac{\sqrt\lambda}{|x|}\right)^{\frac{n-1}{2}} h(\sqrt\lambda \hat x)
   \right| 
   &\leq  C \|h\|_{C^m} |x|^{\frac{1-n}{2}-\alpha}, \\ 
   \left| 
   I_{2n}  
   - e^{-i(\frac{n-1}{4}\pi-\sqrt\lambda |x|)} \frac{1}{\sqrt{2\pi}} \left(\frac{\sqrt\lambda}{|x|}\right)^{\frac{n-1}{2}} 
   h(-\sqrt\lambda \hat x)
   \right| 
   &\leq  C \|h\|_{C^m} |x|^{\frac{1-n}{2}-\alpha}.   
 \end{aligned}
 \end{align}
 Combining \eqref{eq:stationary_phase_I},\eqref{eq:stationary_phase_II} and 
 $\widehat{h\,d\sigma_\lambda}=I_1+\ldots+I_{2n}$  we find for $|x|\geq 1$
 \begin{align*}
   \left| \widehat{h\,d\sigma_\lambda}(x) 
   - \frac{1}{\sqrt{2\pi}} \left(\frac{\sqrt\lambda}{|x|}\right)^{\frac{n-1}{2}}
   m_h(x)
   \right|  
   \leq  C  \|h\|_{C^m} |x|^{\frac{1-n}{2}-\alpha},      
 \end{align*}
 where $m_h$ was introduced in~\eqref{eq:def_mh}. In view of the estimate
 $|\widehat{h\,d\sigma_\lambda}(x)|\leq C\|h\|_\infty$ for $|x|\leq 1$ we 
 derive the weaker statements 
 $$
   |\widehat{h\,d\sigma_\lambda}(x)|\leq C\|h\|_{C^m}(1+|x|)^{\frac{1-n}{2}}
 $$ 
 as well as  
 \begin{align*}
   &\frac{1}{R}\int_{B_R}  \left| \widehat{h\,d\sigma_\lambda}(x) 
   - \frac{1}{\sqrt{2\pi}} \left(\frac{\sqrt\lambda}{|x|}\right)^{\frac{n-1}{2}}
   m_h(x)
   \right|^2 \,dx \\
   &\leq C\|h\|_\infty^2 \cdot 
   \frac{1}{R}\int_{B_1} (1+|x|^{1-n})\,dx
   + C \|h\|_{C^m}^2 \cdot \frac{1}{R} \int_{B_R\sm B_1} |x|^{1-n-2\alpha}\,dx  \\
   &\leq C\Big(\frac{1}{R} + \frac{1}{R^{2\alpha}}\Big) \|h\|_{C^m}^2  \quad 
   \to 0 \qquad \text{as }R\to\infty. 
 \end{align*} 
 This finishes the proof of Proposition~\ref{prop:Herglotz_waves}. \qed
  
\section{Appendix B: Proof of Theorem~\ref{thm:LAP_NLM}}
  
  In this section we prove the Limiting Absorption Principle from Theorem~\ref{thm:LAP_NLM}. We recall the
  statement for the convenience of the reader.
  
\begin{thm*}  
  Let $\lambda>0$ and assume that $t,q\in (1,\infty)$ satisfy~\eqref{eq:admissible}. Then there is a bounded
  linear operator $R_\lambda:L^t(\R^3;\R^3)\cap L^q(\R^3;\R^3)\to L^q(\R^3;\R^3)$ such that $R_\lambda G\in
  H_{loc}(\curl;\R^3)$ is a weak solution of $\nabla\times\nabla\times E-\lambda E = G$ 
  provided $G\in L^t(\R^3;\R^3)\cap L^q(\R^3;\R^3)$. Moreover, we have
  \begin{equation*}
    \|R_\lambda G\|_q \leq C(\|G_1\|_q+\|G_2\|_t) \leq C(\|G\|_q+\|G\|_t) 
  \end{equation*} 
  and $R_\lambda G = -\frac{1}{\lambda}G_1 + \fR_\lambda G_2$ for $\fR_\lambda$ from Theorem~\ref{thm:LAP}
  (applied componentwise).
  If $G\in L^t(\R^3;\R^3)$ is cylindrically symmetric then so is $R_\lambda G$ and $R_\lambda G\in
   W_{loc}^{2,q}(\R^3;\R^3)$ is a strong solution satisfying $\|R_\lambda G\|_q \leq C\|G\|_t$. 
\end{thm*}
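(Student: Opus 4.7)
The plan is to show that the explicit formula $R_\lambda G := -\tfrac{1}{\lambda}G_1 + \fR_\lambda G_2$ asserted in the statement does the job. The starting point is the Helmholtz-type decomposition $G = G_1+G_2$: both of the projections $G\mapsto G_1$ and $G\mapsto G_2$ are combinations of Riesz transforms (homogeneous Fourier multipliers of degree zero), hence bounded on every $L^p(\R^3)$ with $p\in (1,\infty)$. This yields $\|G_1\|_q\leq C\|G\|_q$ and $\|G_2\|_t\leq C\|G\|_t$. Applying Theorem~\ref{thm:LAP} componentwise to $G_2\in L^t(\R^3;\R^3)$ then gives $\fR_\lambda G_2\in L^q(\R^3;\R^3)$ together with $\|\fR_\lambda G_2\|_q\leq C\|G_2\|_t$, and the triangle inequality produces the stated bound $\|R_\lambda G\|_q\leq C(\|G_1\|_q+\|G_2\|_t)\leq C(\|G\|_q+\|G\|_t)$.

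The second step is to verify that $R_\lambda G$ is a weak solution of $\nabla\times\nabla\times E - \lambda E = G$. Since $\widehat{G_1}(\xi)$ is parallel to $\xi$ we have $\nabla\times G_1 = 0$ in the distributional sense, so the curl-free summand $-G_1/\lambda$ contributes exactly $G_1$ to the equation via the $-\lambda E$ term. For the divergence-free summand the key point is that $\fR_\lambda G_2$ itself is divergence-free: this is because $\fR_\lambda = \Real\bigl(\lim_{\eps\to 0^+}(-\Delta-\lambda-i\eps)^{-1}\bigr)$ is realised by scalar Fourier multipliers that preserve the transversality condition $\widehat{u}(\xi)\perp\xi$; since $\widehat{G_2}(\xi)\perp\xi$ pointwise, the same holds for $\widehat{\fR_\lambda G_2}(\xi)$ in the limit, so $\nabla\cdot \fR_\lambda G_2=0$. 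Combining this with the vector identity $\nabla\times\nabla\times = -\Delta + \nabla(\nabla\cdot)$ reduces the curl-curl operator to $-\Delta$ on $\fR_\lambda G_2$, and the componentwise Helmholtz equation $-\Delta \fR_\lambda G_2 - \lambda \fR_\lambda G_2 = G_2$ delivers the contribution $G_2$; summing the two pieces yields $G_1+G_2=G$.

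To verify $R_\lambda G\in H_{loc}(\curl;\R^3)$ I would use that the admissibility condition~\eqref{eq:admissible} forces $q>3$, so $G_1\in L^q\subset L^2_{loc}$ and its curl vanishes distributionally; for the second summand a short elliptic regularity bootstrap starting from Theorem~\ref{thm:LAP} promotes $\fR_\lambda G_2$ to $W^{2,q}_{loc}\hookrightarrow C^{1,\alpha}_{loc}$, so its curl is locally bounded. The weak formulation then follows by a standard integration by parts against a test function $\phi\in C_c^\infty(\R^3;\R^3)$. In the cylindrically symmetric case a direct computation shows that any vector field of the form~\eqref{eq:cylindricallysym} is automatically divergence-free, so $G_1=0$ and $R_\lambda G = \fR_\lambda G$; cylindrical symmetry is preserved because $\fR_\lambda$ is a convolution operator with a rotationally symmetric kernel, and the stronger estimate $\|R_\lambda G\|_q\leq C\|G\|_t$ together with the $W^{2,q}_{loc}$-regularity and strong-solution property follow directly from Theorem~\ref{thm:LAP} applied componentwise. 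The main technical delicacy I anticipate is precisely the rigorous justification that the divergence-free property is preserved through the limiting absorption procedure defining $\fR_\lambda$, since the symbol $(|\xi|^2-\lambda)^{-1}$ is not an honest multiplier at $|\xi|^2=\lambda$ and the limit must be taken in a distributional sense.
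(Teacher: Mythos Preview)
Your proposal is correct and rests on the same Helmholtz decomposition $G=G_1+G_2$ and the same formula $R_\lambda G=-\tfrac{1}{\lambda}G_1+\fR_\lambda G_2$ as the paper, together with the $L^p$-boundedness of the Riesz projections. Where you diverge is in the technical execution. The paper realises $R_\lambda$ as the $\eps\to 0^+$ limit of the genuine resolvents $(L-\lambda-i\eps)^{-1}$ (after first showing that $L$ is selfadjoint with this resolvent formula), and then proves $R_\lambda G\in H_{loc}(\curl;\R^3)$ by testing the regularised equation against $\overline{E^\eps}\phi^2$ to obtain uniform local $H(\curl)$-bounds on the approximants; the weak solution property is inherited in the limit. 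Your route is more direct: you verify the equation by hand on each summand and obtain $H_{loc}(\curl)$ from the elliptic bootstrap $\fR_\lambda G_2\in W^{2,q}_{loc}\hookrightarrow C^{1,\alpha}_{loc}$ (legitimate since $G_2\in L^q$ and $q>3$), which is arguably cleaner. The ``delicacy'' you flag about preserving $\diver(\fR_\lambda G_2)=0$ through the singular multiplier is exactly what the paper's $\eps$-regularisation sidesteps; in your framework it can be settled by noting that $\fR_\lambda$ is convolution with a tempered kernel, so distributional derivatives pass through and $\diver(\fR_\lambda G_2)=\fR_\lambda(\diver G_2)=0$.

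For the cylindrically symmetric part you correctly observe that such fields are divergence-free, whence $G_1=0$ and $R_\lambda G=\fR_\lambda G$. The paper, however, does not simply invoke that $\fR_\lambda$ has a radial kernel: rotational equivariance about the $x_3$-axis alone gives a field of the form $F_r e_r+F_\theta e_\theta+F_z e_z$, not necessarily the purely azimuthal form~\eqref{eq:cylindricallysym}. The paper therefore carries out an explicit Fourier computation showing that $\hat F$ inherits the tangential structure $\hat F(\xi)=\tilde F_0(\sqrt{\xi_1^2+\xi_2^2},\xi_3)(-\xi_2,\xi_1,0)^T$, which is then manifestly preserved by any radial multiplier. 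Your argument can be completed along the same lines (or by additionally using that the third component and the radial component are annihilated by the scalar convolution because they vanish for $G$), but as stated it is a little too brisk on this point.
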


 To prove this, we first show that the domain of the selfadjoint realization of the curl-curl operator $LE:=
 \nabla\times\nabla\times E$ is given by 
 \begin{align*}
   \mathcal D
   &:= \{E\in L^2(\R^3;\R^3): L E\in L^2(\R^3;\R^3)\} \\
   &:= \{E\in L^2(\R^3;\R^3): \xi\mapsto |\xi|^2\hat E(\xi)-\skp{\hat E(\xi)}{\xi}\xi\in L^2(\R^3;\R^3)\}
 \end{align*}
 and that its spectrum is $[0,\infty)$. Using the Helmholtz decomposition 
 \begin{equation}\label{eq:Helmholtz_decomp}
   \hat G_1(\xi) := \skp{\hat G(\xi)}{\frac{\xi}{|\xi|}}\frac{\xi}{|\xi|},\qquad  
   \hat G_2(\xi) := \hat G(\xi) - \hat G_1(\xi)
 \end{equation}
 of a vector field $G\in L^2(\R^3;\R^3)$ we get the following.
 
 \begin{prop}\label{prop:selfadjointness}
   The curl-curl operator $L: \mathcal D\to L^2(\R^3;\R^3)$ is selfadjoint with
   spectrum $\sigma(L)=[0,\infty)$. For $\mu\in\C\sm {[0,\infty)}$ the resolvent
   is given by 
   $$
     (L - \mu)^{-1}G = -\frac{1}{\mu}G_1 + \mathcal R(\mu)G_2
   $$
   where $\mathcal R(\mu)$ is (in each component) the operator defined at
   the beginning of Section~2.
 \end{prop}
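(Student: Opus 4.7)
The plan is to diagonalize $L$ via the Fourier transform and exploit the Helmholtz decomposition pointwise in frequency. Using the convention $\hat f(\xi)=\int f(x)e^{-i\skp{x}{\xi}}\dx$, one computes
\[
  \widehat{LE}(\xi)=|\xi|^2\hat E(\xi)-\skp{\hat E(\xi)}{\xi}\,\xi = M(\xi)\hat E(\xi),
\]
where $M(\xi)=|\xi|^2 I-\xi\xi^T$ is a real symmetric matrix. For each $\xi\neq 0$ this matrix has eigenvalue $0$ on the one-dimensional subspace spanned by $\xi/|\xi|$ and eigenvalue $|\xi|^2$ on its orthogonal complement. Thus, introducing the orthogonal projections $P_1(\xi):=\frac{\xi\xi^T}{|\xi|^2}$ and $P_2(\xi):=I-P_1(\xi)$, we can write $\hat E=\hat E_1+\hat E_2$ with $\hat E_j=P_j(\xi)\hat E$, and the Helmholtz decomposition~\eqref{eq:Helmholtz_decomp} is precisely the inverse Fourier transform of this pointwise splitting. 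In these coordinates $L$ acts as $0$ on the curl-free part and as multiplication by $|\xi|^2$ on the divergence-free part.

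Now I would verify selfadjointness. Since $M(\xi)$ is Hermitian for every $\xi$, the Fourier multiplier operator $L$ on the natural domain
\[
  \mathcal D=\{E\in L^2(\R^3;\R^3): M(\cdot)\hat E\in L^2(\R^3;\C^3)\}
\]
is unitarily equivalent, via $\mathcal F$ followed by the pointwise orthogonal change of basis diagonalizing $M(\xi)$, to the orthogonal direct sum of (a) the zero operator on the closed subspace $\mathcal F^{-1}(\ran P_1)$ and (b) the multiplication operator $\hat E_2(\xi)\mapsto |\xi|^2\hat E_2(\xi)$ on $\mathcal F^{-1}(\ran P_2)$. Both summands are selfadjoint (the first trivially, the second as a standard multiplication operator), so $L$ is selfadjoint. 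The spectrum of the zero summand is $\{0\}$ and the spectrum of the multiplication summand is the essential range of $\xi\mapsto|\xi|^2$, which is $[0,\infty)$; hence $\sigma(L)=[0,\infty)$.

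For $\mu\in\C\sm[0,\infty)$, the matrix $M(\xi)-\mu I$ is invertible for every $\xi\in\R^3$ and on the two invariant subspaces we can read off the inverse: on $\ran P_1(\xi)$ it equals $-\mu^{-1} P_1(\xi)$, and on $\ran P_2(\xi)$ it equals $(|\xi|^2-\mu)^{-1}P_2(\xi)$. Therefore
\[
  \widehat{(L-\mu)^{-1}G}(\xi)
  = -\frac{1}{\mu}\,P_1(\xi)\hat G(\xi) + \frac{1}{|\xi|^2-\mu}\,P_2(\xi)\hat G(\xi)
  = -\frac{1}{\mu}\hat G_1(\xi) + \frac{1}{|\xi|^2-\mu}\hat G_2(\xi),
\]
and inverting the Fourier transform yields $(L-\mu)^{-1}G=-\mu^{-1}G_1+\mathcal R(\mu)G_2$, where $\mathcal R(\mu)$ is the componentwise Helmholtz resolvent from Section~2. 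Boundedness on $L^2$ follows because both Fourier multipliers $-1/\mu$ and $1/(|\xi|^2-\mu)$ are bounded.

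The only genuinely delicate point I anticipate is the careful identification of the abstract domain of selfadjointness with the distributional description $\{E\in L^2:LE\in L^2\}$; this is standard once one notes that $LE$ in the distributional sense coincides with $\mathcal F^{-1}(M(\xi)\hat E(\xi))$, so the two domains agree. Everything else reduces to the diagonalization described above.
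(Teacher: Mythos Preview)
Your argument is correct, and the resolvent computation is essentially identical to the paper's. The two proofs diverge in how selfadjointness and the spectrum are established.

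For selfadjointness, the paper proceeds more concretely: it notes that $L$ is symmetric on Schwartz functions and then shows directly via Plancherel that any $E$ in the domain of the adjoint satisfies $\xi\mapsto |\xi|^2\hat E(\xi)-\skp{\hat E(\xi)}{\xi}\xi\in L^2$, i.e.\ $E\in\mathcal D$. You instead invoke the structural fact that $L$ is unitarily equivalent to a direct sum of the zero operator on the curl-free part and multiplication by $|\xi|^2$ on the divergence-free part, both manifestly selfadjoint. Your route is cleaner once one accepts the general machinery of multiplication operators and direct integrals; the paper's route is more self-contained.

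For the spectrum, the paper constructs explicit Weyl sequences $F_n(x)=c_n\chi(x/n)(\cos(\sqrt\mu x_3),0,0)^T$ for each $\mu>0$, checking by hand that $\|LF_n-\mu F_n\|_2\to 0$. You instead read the spectrum off as the union of $\{0\}$ and the essential range of $|\xi|^2$ on the divergence-free subspace. This is shorter, but note one small point you should make explicit: the multiplication by $|\xi|^2$ here acts on the \emph{subspace} $\{F\in L^2(\R^3;\C^3):P_1(\xi)F(\xi)=0\}$, not on all of $L^2(\R^3;\C^3)$, so to conclude that its spectrum is still all of $[0,\infty)$ you need that $\ran P_2(\xi)$ is nontrivial (indeed two-dimensional) for every $\xi\neq 0$. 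This is obvious, but worth a sentence. The paper's Weyl-sequence argument avoids this abstraction at the cost of a longer computation.
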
 
 \begin{proof}
   The curl-curl-operator $L$ is symmetric when defined on the Schwartz functions in $\R^3$. So we have to
   show that all $E\in L^2(\R^3;\R^3)$ in the domain of its adjoint actually belong to $\mathcal  D$.
   So assume that $E\in L^2(\R^3;\R^3)$ satisfies for all
   $F\in \mathcal D$ the inequality 
   $$
     \left| \int_{\R^3} \skp{E}{LF} \right| \leq C \|F\|_2.
   $$
   Using Plancherel's identity on both sides  this can be rewritten as
   $$
     \left| \int_{\R^3} \skp{|\xi|^2\hat E(\xi)-\skp{\hat E(\xi)}{\xi}\xi}{\ov{\hat F(\xi)}} \,d\xi\right|
     = \left| \int_{\R^3} \skp{\hat E(\xi)}{|\xi|^2\ov{\hat F(\xi)}-\skp{\ov{\hat F(\xi)}}{\xi}\xi} \,d\xi
     \right| \leq C  \|\hat F\|_2.
   $$
   Since this holds for all $F\in\mathcal D$, which is dense in $L^2(\R^3;\R^3)$, we infer that $\xi\mapsto
   |\xi|^2\hat E(\xi)-\skp{\hat E(\xi)}{\xi}\xi$ is square-integrable, which precisely means $E\in \mathcal D$. 
   
   \medskip
   
   To prove the second claim let $\mu\in\C\sm {[0,\infty)}$ and $G\in L^2(\R^3;\R^3)$. Then $\nabla
   \times\nabla\times E - \mu E = G$ is equivalent to 
   $$
     |\xi|^2\hat E(\xi) -\skp{\xi}{\hat E(\xi)}\xi - \mu \hat E(\xi) = \hat G(\xi).
   $$
   Decomposing $E$ into $E_1,E_2$ as in~\eqref{eq:Helmholtz_decomp} and multiplying the above equation with
   $\xi/|\xi|\in\R^3$ we find
  \begin{equation*} 
    \hat E_1(\xi) 
    =  -\frac{1}{\mu} \skp{\hat G(\xi)}{\frac{\xi}{|\xi|}}\frac{\xi}{|\xi|} 
    =  -\frac{1}{\mu} \hat G_1(\xi).
  \end{equation*} 
  This implies
  \begin{align*}
    \hat E_2(\xi) (|\xi|^2-\mu) 
    = \hat G(\xi) + \mu\hat E_1(\xi) 
    = \hat G(\xi) - \skp{\hat G(\xi)}{\frac{\xi}{|\xi|}} \frac{\xi}{|\xi|}
    = \hat G_2(\xi). 
  \end{align*}
  So we have
  \begin{align*}
    E = E_1+E_2  
    = -\frac{1}{\mu}  G_1 + \mathcal F^{-1} \left(\frac{\hat G_2(\cdot)}{|\cdot|^2-\mu} \right)
    = -\frac{1}{\mu}  G_1 + \mathcal R(\mu)G_2.
  \end{align*}
  Since the right hand side defines a bounded linear operator provided $\mu\in\C\sm {[0,\infty)}$, this proves
  that $\C\sm {[0,\infty)}$ belongs to the resolvent set of the curl-curl operator. 
  
  \medskip
  
  By the closedness of the
  spectrum it therefore suffices to show that for all $\mu\in (0,\infty)$ there is a Weyl sequence of the
  curl-curl operator. Indeed, as in the case of Laplacian one may consider the sequence 
  $$
    F_n(x):= c_n\chi(x/n)F(x)\qquad\text{where }
    F(x):=  \vecIII{\cos(\sqrt\mu  x_3)}{0}{0}
  $$
  where $\chi\in C_0^\infty(\R^3)$ is identically one on the cuboid $W:=[-1,1]^3\subset\R^3$ and zero outside
  $2W$. The factor  $c_n>0$ is a normalizing constant ensuring $\|F_n\|_2=1$. Using 
  \begin{align*}
    \int_{-a}^a \int_{-a}^a\int_{-a}^a \cos^2(n\sqrt\mu x_3) \,dx_3\,dx_2\,dx_1 
    &=4a^3 + o(1)\qquad (n\to\infty), \\
    \int_{-a}^a \int_{-a}^a\int_{-a}^a \sin^2(n\sqrt\mu x_3) \,dx_3\,dx_2\,dx_1 
    &=4a^3 + o(1)\qquad (n\to\infty)  
  \end{align*}
  for $a=1$ we first get 
   $$
     c_n 
     = \frac{1}{\|\chi(\cdot/n)F\|_2}  
     = \frac{1}{n^{3/2} \|\chi F(n\cdot)\|_2}
    \leq \frac{1}{n^{3/2}\|F(n\cdot)\|_{L^2(W)}}
     \leq  \frac{1}{n^{3/2} (2+o(1))}.
   $$
  Using this as well as the above asymptotics for $a=2$ we find
  \begin{align*}
    \|LF_n-\mu F_n\|_2
    &= \|-\Delta  F_n-\mu F_n\|_2 \\
    &= c_n\cdot \Big\|   \chi(x/n) (-\Delta F-\mu F) - \frac{2}{n}
    \nabla\chi(x/n) \cdot \nabla F(x) 
    - \frac{1}{n^2}\Delta\chi(x/n) F(x) \Big\|_2 \\
    &\leq  Cn^{-3/2} \cdot \Big( 0 + 2n^{1/2} \|\nabla\chi\cdot\nabla F(n\cdot)\|_2 
    + n^{-1/2} \|(\Delta\chi) F(n\cdot)\|_2\Big) \\
    &\leq C \Big( n^{-1} \| |\nabla F(n\cdot)|\|_{L^2(2W)} + n^{-2}\| |F(n\cdot)|\|_{L^2(2W)} \Big) \\
    &\leq  C \big( \sqrt{\mu} n^{-1} +n^{-2}\big) (\sqrt{4\cdot 2^3}+o(1)) \\
    &=O(n^{-1}) \qquad\text{as }n\to\infty 
  \end{align*}
  so that a Weyl sequence at the level $\mu$ is found.
 \end{proof}
  
 \medskip
 
 \noindent
 \textbf{Proof of Theorem~\ref{thm:LAP_NLM}:}  In order to construct $R_\lambda$ consider a Schwartz function
 $G\in \mathcal S(\R^3;\R^3)$ so that $E^\eps:=(L - \lambda-i\eps)^{-1}G$ is
 well-defined and Proposition~\ref{prop:selfadjointness} implies 
 $$
   E^\eps = -\frac{1}{\lambda+i\eps}  G_1 + \mathcal R(\lambda+i\eps)G_2 \in \mathcal D+i\mathcal D. 
 $$ 
 By the Limiting Absorption Principle for the Helmholtz operator
 from Theorem~\ref{thm:LAP} we get
  \begin{align*} 
    (L - \lambda-i0)^{-1}G
    := \lim_{\eps\to 0^+} E^\eps 
     =   - \frac{1}{\lambda}  G_1  + \mathcal R(\lambda+i0)G_2.
  \end{align*}
  Here, both limits exist in $L^q(\R^3;\R^3)$. Indeed, the Riesz transform maps $L^r(\R^3)$ into itself for
  all $r\in (1,\infty)$ and thus $\|G_1\|_r\leq C\|G\|_r$ for $r\in\{t,q\}$,  hence $\|G_2\|_t\leq
  C\|G\|_t<\infty$ and $\|G_1\|_q\leq C\|G\|_q<\infty$.  
  Taking the real part of this we obtain that 
  $$
    R_\lambda G:= - \frac{1}{\lambda}  G_1  + \fR_\lambda G_2
  $$ 
  defines a  bounded linear operator from $L^t(\R^3;\R^3)\cap L^q(\R^3;\R^3)$ to $L^q(\R^3;\R^3)$  with
  \begin{align*}
    \|R_\lambda G\|_q
    \leq \frac{1}{\lambda} \|G_1\|_q
    + \|\fR_\lambda G_2\|_q 
    \leq C(\|G_1\|_q + \|G_2\|_t).    
  \end{align*}

  \medskip
  
  Next we prove that $R_\lambda G$ is a weak solution lying in $H_{loc}(\curl;\R^3)$. We will use
  that $E^\eps\in L^q(\R^3;\R^3)$ implies $E^\eps\in L^{q'}_{loc}(\R^3;\R^3)$ due to $q>2>q'$.   
  Testing the equation for $E^\eps\in \mathcal D+i\mathcal D$ with $\ov{E^\eps}\phi^2 \in H(\curl;\C^3)$ we
  get
  \begin{equation}\label{eq:equation_for_Eeps}
    \int_{\R^3} \skp{\nabla\times E^\eps}{\nabla \times (\ov{E^\eps}\phi^2)} - \lambda  
    |E^\eps|^2\phi^2 = \int_{\R^3} \skp{G}{\ov{E^\eps}}\phi^2
    \qquad\text{for all }\phi\in C_0^\infty(\R^3). 
  \end{equation} 
  So for any given compact set $K\subset\R^3$ we may choose a nonnegative test function $\phi$   such  
  that $\phi|_K\equiv 1$, set $K':=\supp(\phi)$. Then we get 
  $$
    \int_{\R^3} \skp{G}{\ov{E^\eps}}\phi^2
    \leq \|G\|_{L^{q'}(K';\R^3)} \|E^\eps\|_{L^q(K';\C^3)} \|\phi\|_\infty^2
  $$
  as well as  
  \begin{align*}
    \int_{\R^3} \skp{\nabla\times E^\eps}{\nabla \times (\ov{E^\eps}\phi^2)} - \lambda  
    |E^\eps|^2\phi &= \int_{\R^3} |\nabla\times (E^\eps\phi)|^2  - |\nabla\phi\times E^\eps|^2
    - \lambda  |E^\eps|^2\phi^2 \\
    &\geq \int_{\R^3} |\nabla\times (E^\eps\phi)|^2  -  \big(|\nabla\phi|^2+ \lambda \phi^2 \big)|E^\eps|^2 \\
     &\geq \int_{\R^3} |\nabla\times (E^\eps\phi)|^2  - 
     \||\nabla\phi|^2+\lambda\phi^2\|_{L^{\frac{q}{q-2}}(K';\R^3)} \|E^\eps\|_{L^q(K';\R^3)}^2.
  \end{align*}
  Here we used $|a\times b|\leq |a||b|$ for $a,b\in\C^3$ and $q>\frac{2n}{n-1}>2$.  
  Combining the previous two inequalities with~\eqref{eq:equation_for_Eeps} we get
  \begin{align*}
    \int_{K} |\nabla\times E^\eps|^2
    &\leq \int_{\R^3} |\nabla\times (E^\eps\phi)|^2  \\
    &\leq  C \left(\|E^\eps\|_{L^q(K';\C^3)}^2  + \|G\|_{L^{q'}(K';\R^3)} \|E^\eps\|_{L^q(K';\C^3)}\right) \\
    &\leq  C \left(\|E^\eps\|_{L^q(\R^3;\C^3)}^2  + \|G\|_{L^{q'}(K';\R^3)} \|E^\eps\|_{L^q(\R^3;\C^3)}\right)
    \\
    &\leq C<\infty
  \end{align*}
  because the functions $E^\eps$ are equibounded in $L^q(\R^3;\C^3)$. The latter fact comes from the
  proof of Guti\'{e}rrez' Limiting Absorption Principle, see Theorem~6 in~\cite{Gut_nontrivial}. 
  Since $K$ was arbitrary, we conclude that $(E^\eps)$ is bounded in $H_{loc}(\curl;\C^3)$ and therefore the
  weak limit of its real part $R_\lambda G$ also belongs to that space. 
  
  \medskip
  
  Finally we show that $R_\lambda$ leaves the space of cylindrically symmetric solutions invariant. If $F\in
  \mathcal S(\R^3;\R^3)$ is cylindrically symmetric, then there is $F_0:[0,\infty)\times\R\to \R$ such that 
  $$
    F(x_1,x_2,x_3)= F_0(\sqrt{x_1^2+x_2^2},x_3) \vecIII{-x_2}{x_1}{0}
    \qquad\text{for all } (x_1,x_2,x_3)\in\R^3.
  $$
  It suffices to show that there is $\tilde F_0:[0,\infty)\times\R\to\C$ such that
  $\ov{\tilde F_0(r,-z)}=-\tilde F_0(r,z)$ for all $r\geq 0,z\in\R$ and 
  \begin{equation} \label{eq:form_hatF}
   \hat F(\xi_1,\xi_2,\xi_3)
   = \tilde F_0(\sqrt{\xi_1^2+\xi_2^2},\xi_3) \vecIII{-\xi_2}{\xi_1}{0}
    \qquad\text{for all } (\xi_1,\xi_2,\xi_3)\in\R^3.
  \end{equation}
  Clearly the third component of $\hat F$ vanishes identically. Using the symmetry
  of $F$ and the definition of the Fourier transform we moreover obtain after some calculations that for all
  $\theta\in [0,2\pi)$ we have 
  $$
    \bigskp{\hat F(\xi_1,\xi_2,\xi_3)}{
    \matIII{\cos(\theta)}{-\sin(\theta)}{0}{\sin(\theta)}{\cos(\theta)}{0}{0}{0}{0} 
     \vecIII{\xi_1}{\xi_2}{0}}
    = \rho_\theta(\sqrt{\xi_1^2+\xi_2^2},\xi_3) 
  $$ 
  for some function $\rho_\theta:[0,\infty)\times\R\to\C$, i.e., for every given $\theta\in [0,2\pi)$ the left
  hand side is invariant under all rotations with respect to the $(\xi_1,\xi_2)$-variable. Using this for
  $\theta=0$ we get
  \begin{align*}
    &\bigskp{\hat F(\xi_1,\xi_2,\xi_3)}{ \vecIII{\xi_1}{\xi_2}{0}} \\
    &= \rho_0(\sqrt{\xi_1^2+\xi_2^2},\xi_3) \quad
    =\quad  \bigskp{\hat F(0,\sqrt{\xi_1^2+\xi_2^2},\xi_3)}{ \vecIII{0}{\sqrt{\xi_1^2+\xi_2^2}}{0}} \\
    &= \frac{1}{(2\pi)^{3/2}} \int_{\R^3} F_0(\sqrt{x_1^2+x_2^2},x_3) \bigskp{\vecIII{-x_2}{x_1}{0}}{
    \vecIII{0}{\sqrt{\xi_1^2+\xi_2^2}}{0}} e^{-i(x_2\sqrt{\xi_1^2+\xi_2^2}+x_3\xi_3)} \,dx \\
    &= \frac{1}{(2\pi)^{3/2}} \int_{\R^2} \left( \int_\R F_0(\sqrt{x_1^2+x_2^2},x_3) x_1
     \,dx_1\right) \sqrt{\xi_1^2+\xi_2^2} e^{-i(x_2\sqrt{\xi_1^2+\xi_2^2}+ x_3\xi_3)} \,d(x_2,x_3) \\
    &= 0
  \end{align*}
  because $x_1\mapsto F_0(\sqrt{x_1^2+x_2^2},x_3) x_1$ is odd for all fixed
  $(x_2,x_3)\in\R^2$. This shows 
  $$
    \hat F(\xi_1,\xi_2,\xi_3) = \frac{\rho_{\pi/2}(\sqrt{\xi_1^2+\xi_2^2},\xi_3)}{\xi_1^2+\xi_2^2}
    \vecIII{-\xi_2}{\xi_1}{0} \qquad\text{for all }(\xi_1,\xi_2,\xi_3)\in\R^3 
  $$ 
  so that $\hat F$ can be written in the form~\eqref{eq:form_hatF}. Finally, since $F$ is real-valued
  we get $\ov{\hat F(-\xi)}=\hat F(\xi)$ and hence $\ov{\tilde F_0(r,-z)}=-\tilde F_0(r,z)$ for all
  $r\geq 0,z\in\R$. This finishes the proof. \qed

\section{Appendix C: The Ruiz-Vega resolvent estimates}
 
 In this section we review the resolvent estimates by Ruiz and Vega that are essentially contained in
 Theorem~3.1 in~\cite{RuVe_OnLocal}. Since this theorem does not exactly provide the estimates we need, we decided to
 reformulate their results in the way we apply them in the proof of Proposition~\ref{prop:LAP_asymptotics}. We
 even show a bit more.
 
 \begin{thm}\label{thm:RuizVega}
   Let $n\in\N,n\geq 2$ and assume $f\in L^q(\R^n)$ for $\frac{1}{n+1}\leq \frac{1}{q}-\frac{1}{2}\leq
   \min\{\frac{1}{2},\frac{2}{n}\}$ with $(n,q)\neq (4,1)$.
   Then there is a $C>0$ such that for all $\eps\neq 0$ we have 
    \begin{equation*} 
      \sup_{R\geq 1} \left(\frac{1}{R}\int_{B_R} |\mathcal R(\lambda+i\eps)f(x)|^2\,dx\right)^{1/2} \leq C 
      \|f\|_q.
    \end{equation*}
   If moreover $\frac{1}{n+1}\leq \frac{1}{q}-\frac{1}{2}\leq \frac{1}{n},(q,n)\neq (1,2)$ holds, then
   \begin{equation*} 
      \sup_{R\geq 1} \left(\frac{1}{R}\int_{B_R} |\nabla \mathcal R(\lambda+i\eps)f(x)|^2\,dx\right)^{1/2}
      \leq C \|f\|_q.
   \end{equation*}
 \end{thm}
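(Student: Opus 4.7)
The plan is to adapt the frequency-localisation argument of Ruiz and Vega~\cite{RuVe_OnLocal}. After rescaling via $\mathcal R(\lambda+i\eps)f(x) = \lambda^{-1}\mathcal R(1+i\eps/\lambda)(f(\lambda^{-1/2}\cdot))(\sqrt\lambda x)$ I may assume $\lambda = 1$. The Morrey-type quantity on the left-hand side is equivalent to the norm of the Agmon-H\"ormander $B^*$-space, whose predual $B$ carries the norm $\|g\|_B := \sum_{N\geq 0} 2^{N/2}\|g\|_{L^2(A_N)}$ on the dyadic annuli $A_N := B_{2^{N+1}} \sm B_{2^N}$ (with $A_0 := B_1$). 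Since $\mathcal R(1+i\eps)^* = \mathcal R(1-i\eps)$, duality reduces the first claim to
\[ \|\mathcal R(1-i\eps)g\|_{q'} \leq C\|g\|_B \qquad \text{uniformly in } \eps \neq 0. \]

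Next I would split the multiplier $m_\eps(\xi) := (|\xi|^2 - 1 - i\eps)^{-1}$ as $m_\eps = m_\eps^{\text{near}} + m^{\text{far}}$ via a smooth cutoff $\chi(|\xi|)$ supported in a neighbourhood of $|\xi| = 1$. The far piece has a symbol that is uniformly bounded in $\eps$ and behaves like a Bessel potential of order two; the Kenig-Ruiz-Sogge uniform Sobolev estimates show it maps $L^q \to L^{q^*}$ with $1/q^* = 1/q - 2/n$, and H\"older on $B_R$ then gives the required $B^*$-bound provided $q^* \geq 2n/(n-1)$, which is precisely the upper hypothesis $1/q - 1/2 \leq 2/n$. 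The endpoint $(n,q)=(4,1)$ has to be excluded because there $1/q^* = 0$ and a logarithmic loss appears in the Sobolev embedding.

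The substance of the proof lies in the near-sphere piece. In polar coordinates $\xi = r\omega$, writing $\mathcal E_r h := \widehat{h\,d\sigma_r}$ for the Fourier extension from $rS^{n-1}$, one has
\[ T_\eps^{\text{near}}f(x) = \int_{1/2}^{3/2} \frac{\chi(r)\,r^{n-1}}{r^2-1-i\eps}\bigl(\mathcal E_r\hat f(r\cdot)\bigr)(x)\,dr. \]
Pairing with $g \in B$ and applying the Stein-Tomas inequality $\|\mathcal E_r h\|_{L^{p'}} \leq C\|h\|_{L^2(rS^{n-1})}$ for $p' = 2(n+1)/(n-1)$ corresponds exactly to the lower hypothesis $1/q - 1/2 \geq 1/(n+1)$; a one-dimensional $TT^*$ argument in $r$, combined with $L^2$-boundedness of the Hilbert transform to absorb $\pv\,1/(r-1)$, then delivers the bound uniformly in $\eps$.

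The gradient estimate follows the same scheme: the extra factor $\xi$ in the symbol costs one power of decay on the far piece and therefore forces the stricter upper bound $1/q - 1/2 \leq 1/n$, while the near-sphere estimate is unchanged. The main obstacle throughout is uniformity in $\eps \to 0^\pm$: the singular denominator $1/(r^2-1-i\eps)$ must be paired with the Stein-Tomas orthogonality gain \emph{before} any limit is taken; at the excluded pairs $(n,q)=(4,1)$ and $(n,q)=(2,1)$ one of Stein-Tomas or the uniform Sobolev inequality degenerates and the sum over the near/far decomposition fails to close.
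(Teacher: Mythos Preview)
Your strategy differs substantially from the paper's. The paper works directly (no duality to $B$), splits $\mathcal R(1+i\eps)f = v^\eps + w^\eps$ via a cutoff near $|\xi|=1$, handles the far piece $w^\eps$ by pointwise kernel bounds $|G^\eps(z)| \lesssim \min\{|z|^{2-n},|z|^{-1-n}\}$ and Young's inequality to get $w^\eps \in L^2$ outright (so the Morrey bound is trivial), and treats the near piece $v^\eps$ by a \emph{Cartesian} partition of unity into sectors where $|\xi_j| \gtrsim |\xi|$, followed by a $TT^*$ argument in the single variable $x_j$ in which stationary phase supplies the dispersive $(L^1,L^\infty)$ endpoint and interpolation closes. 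Your polar/Stein--Tomas route for the near piece is a legitimate alternative (essentially the Agmon--H\"ormander argument) and, if carried out, would be shorter there at the price of quoting Stein--Tomas as a black box. However, two steps in your far-piece argument do not work as written.

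First, the H\"older step goes the wrong way. If $w^\eps \in L^{q^*}$ with $q^* \geq 2$, then $R^{-1}\int_{B_R}|w^\eps|^2 \leq C R^{\,n(1-2/q^*)-1}\|w^\eps\|_{q^*}^2$, which stays bounded only for $q^* \leq 2n/(n-1)$, not $q^* \geq 2n/(n-1)$. And this is \emph{not} implied by the upper hypothesis $1/q - 1/2 \leq 2/n$: at the lower endpoint $1/q - 1/2 = 1/(n+1)$ one gets $1/q^* = 1/2 + 1/(n+1) - 2/n$, so for instance $q^* = 12$ when $n=3$, far above $2n/(n-1)=3$, and the Morrey bound blows up with $R$. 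The paper sidesteps this by landing directly in $L^2$: the kernel bounds give $G^\eps \in L^{2q/(3q-2)}$ (weak $L^{2q/(3q-2)}$ at the endpoint $1/q-1/2=2/n$), and Young's inequality yields $\|w^\eps\|_2 \leq C\|f\|_q$ for the full range.

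Second, your endpoint arithmetic is off: at $(n,q)=(4,1)$ one has $1/q^* = 1 - 2/4 = 1/2$, not $0$, so there is no logarithmic failure of Sobolev embedding there. The actual reason this pair is excluded is that at $1/q - 1/2 = 2/n$ the kernel lies only in weak $L^{2q/(3q-2)}$, and the weak-type Young inequality requires all three exponents to lie strictly between $1$ and $\infty$; at $(n,q)=(4,1)$ the input exponent is $q=1$ and the inequality is unavailable. The exclusion $(q,n)=(1,2)$ for the gradient estimate has the same origin, now with $|\nabla G^\eps(z)| \lesssim \min\{|z|^{1-n},|z|^{-1-n}\}$.
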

 
 We emphasize that this result implies that the inequality~\eqref{eq:estimate_asymptotics} from
 the proof of Proposition~\ref{prop:LAP_asymptotics} holds for $q:=p'$ because $\frac{2(n+1)}{n-1}\leq p\leq
 \frac{2n}{(n-4)_+}$, $(n,p)\neq (4,\infty)$ is equivalent to 
 $\frac{1}{n+1}\leq \frac{1}{q}-\frac{1}{2}\leq \min\{\frac{1}{2},\frac{2}{n}\}$, $(n,q)\neq (4,1)$.  
 It seems that our statement dealing with the two-dimensional case $n=2$ does not appear in the literature. 
 In the case $n\geq 3$ our Theorem~\ref{thm:RuizVega} is covered by 
 Guti\'{e}rrez' Theorem 7 in~\cite{Gut_nontrivial} except for the endpoint case $n=3,q=1$. The
 proof by Guti\'{e}rrez is not carried out in detail but it is referred to the paper of Ruiz and
 Vega (Theorem~3.1 in~\cite{RuVe_OnLocal}) where a closely related but different result is proved. 
 So we believe that an updated and self-contained version of these resolvent estimates might be useful even
 though our proof below mainly reformulates the arguments of Ruiz and Vega.

 \medskip\medskip
 
 \noindent \textbf{Proof of Theorem~\ref{thm:RuizVega}:}\;  It suffices to prove the estimates for
 Schwartz functions $f\in \mathcal S(\R^n)$ and, via rescaling, for $\lambda=1$. Then we have  
 $$
  \mathcal R(1+i\eps)f
  = \mathcal F^{-1}\left(  \frac{1}{|\xi|^2-1-i\eps} \hat f(\xi) \right). 
 $$
We introduce the splitting $\mathcal R(1+i\eps)f=v^\eps+w^\eps$ where 
$$
  v^\eps:=\mathcal F^{-1}\left(  \frac{\phi(\xi)}{|\xi|^2-1-i\eps} \hat f(\xi) \right),\qquad 
  w^\eps:=\mathcal F^{-1}\left(  \frac{1-\phi(\xi)}{|\xi|^2-1-i\eps} \hat f(\xi)\right)
$$ 
and $\phi\in C_0^\infty(\R^n)$ is a test function satisfying $\supp(\phi)\subset B_{1+\delta}\sm
B_{1-\delta}$, $\delta:=1-\frac{1}{\sqrt{1+\frac{1}{2n}}}$ as well as $\phi\equiv 1$ on a neighbourhood of
the unit sphere. Then we have $w^\eps=G^\eps \ast f$ where 
$$
  |G^\eps(z)|\leq C\begin{cases}
     \min \{|z|^{2-n},|z|^{-1-n}\} &,n\geq 3 \\
     \min \{|\log(z)|,|z|^{-3}\} &,n=2  
  \end{cases}\qquad\text{and}\qquad 
  |\nabla G^\eps(z)|\leq C\min\{|z|^{1-n},|z|^{-1-n}\} 
$$ 
for some $C>0$ independent of $\eps$, see page 8-9 in~\cite{BoCaMa_4thorder} for related estimates. 
These bounds imply $G^\eps\in L^{\frac{2q}{3q-2}}(\R^n)$ whenever $0\leq \frac{1}{q}-\frac{1}{2}\leq \min\{\frac{1}{2},\frac{2}{n}\}$ with
$\frac{1}{q}-\frac{1}{2}<\frac{2}{n}$ and the corresponding norms are uniformly bounded from above with
respect to $\eps$. For such $q$ we get
\begin{align*}
  \sup_{R\geq 1}\left( \frac{1}{R}\int_{B_R} |w^\eps (x)|^2 \,dx \right)^{1/2}
  \leq  \|w^\eps\|_2
  = \|G^\eps\ast f\|_2
  \leq \|G^\eps\|_{\frac{2q}{3q-2}}\|f\|_q
  \leq C\|f\|_q.
\end{align*}
It remains to prove this estimate for $n\geq 5$ and $\frac{1}{q}-\frac{1}{2}= \frac{2}{n}$. In this case 
we use $G^\eps\in L^{\frac{2q}{3q-2},w}(\R^n)$ with uniformly bounded norms so that Young's convolution
inequality for classical and weak Lebesgue spaces (see Theorem 1.4.25 in~\cite{Grafakos} for the latter)
yields
\begin{align*}
  \sup_{R\geq 1}\left( \frac{1}{R}\int_{B_R} |w^\eps (x)|^2 \,dx \right)^{1/2}
  \leq  \|G^\eps\ast f\|_2
  \leq \|G^\eps\|_{\frac{2q}{3q-2},w}\|f\|_q
  \leq C\|f\|_q.
\end{align*}
(Notice that we also have $G^\eps\in L^{\frac{2q}{3q-2},w}(\R^n)$ 
 in the case $\frac{1}{q}-\frac{1}{2}= \frac{2}{n}$ and $n=4$, but Theorem~1.4.25~\cite{Grafakos} does not
 apply since each of the exponents $2,\frac{2q}{3q-2},q$ has to be different from 1 or $\infty$.)
The same way, if  $0\leq \frac{1}{q}-\frac{1}{2}<\frac{1}{n}$ then we have
$|\nabla G^\eps|\in L^{\frac{2q}{3q-2}}(\R^n)$ and $\frac{1}{q}-\frac{1}{2}=\frac{1}{n}$ implies
$|\nabla G^\eps|\in L^{\frac{2q}{3q-2},w}(\R^n)$  with uniformly bounded norms. In the former case we get
\begin{align*}
  \sup_{R\geq 1}\left( \frac{1}{R}\int_{B_R} |\nabla w^\eps (x)|^2 \,dx \right)^{1/2}
\leq  \||\nabla w^\eps|\|_2
= \||\nabla G^\eps|\ast f\|_2
\leq \||\nabla G^\eps|\|_{\frac{2q}{3q-2}}\|f\|_q
\leq C\|f\|_q.
\end{align*} 
and in the latter case we have under the additional assumption $(q,n)\neq (1,2)$ (for the same reason as
above)
\begin{align*}
  \sup_{R\geq 1}\left( \frac{1}{R}\int_{B_R} |\nabla w^\eps (x)|^2 \,dx \right)^{1/2}
\leq   \||\nabla G^\eps|\ast f\|_2
\leq \||\nabla G^\eps|\|_{\frac{2q}{3q-2},w} \|f\|_q
\leq C\|f\|_q.
\end{align*} 
So it remains to  show that the estimate   
\begin{equation}\label{eq:estimate_veps} 
  \sup_{R\geq 1}\left(\frac{1}{R}\int_{B_R} |v^\eps (x)|^2+|\nabla v^\eps(x)|^2 \,dx \right)^{1/2} \leq C
  \|f\|_q
\end{equation}
holds whenever $\frac{1}{q}-\frac{1}{2}\geq \frac{1}{n+1}$. 

\medskip

 To prove \eqref{eq:estimate_veps}  we split $v^\eps$ into $2n$ different pieces using a suitable partition of
 unity. In view of $\supp(\hat{v^\eps})\subset B_{1+\delta}\sm B_{1-\delta}$ we consider the covering
 $\{O_{1,+},O_{1,-},\ldots,O_{n,+},O_{n,-}\}$ of $B_{1+\delta}\sm B_{1-\delta}$ given by the open sets 
  $$
    O_{j,\pm}:= \left\{\xi\in B_{1+\delta}\sm B_{1-\delta}:  \pm \xi_j> \frac{1}{\sqrt{2n}}|\xi| \right\}
  $$
  Let $\{\eta_{1,+},\eta_{1,-},\ldots,\eta_{n,+},\eta_{n,-}\}$ be an associated partition of unity so that 
  $$
    v = \sum_{j=1}^n (v^\eps_{j,+}+v^\eps_{j,-}) \quad
    \text{where}\quad 
    v^\eps_{j,\pm} := \mathcal F^{-1}\left( \frac{\eta_{j,\pm}(\xi)\phi(\xi)}{|\xi|^2-1-i\eps} \hat
    f(\xi)\right)  
  $$
  The reason for this is that we want to make use of the following inequalities: 
  \begin{align}\label{eq:partition_of_unity}
    \xi\in\supp(\hat v^\eps_{j,\pm})
    \quad\Longrightarrow\quad
    \begin{cases}
    \quad |\xi_j|\leq |\xi|\leq  1+\delta,\\
    \quad
    \pm \xi_j>\frac{1}{\sqrt{2n}}|\xi|>\max\Big\{\frac{1-\delta}{\sqrt{2n}},\frac{1}{\sqrt{2n}}|\xi_j'|\Big\},
    \\
    \quad  |\xi_j'|  
    \leq \frac{\sqrt{|\xi_j'|^2+|\xi_j|^2}}{\sqrt{1+\frac{1}{2n}}}
    \leq \frac{1+\delta}{\sqrt{1+\frac{1}{2n}}}
    = 1-\delta^2.
    \end{cases}
    \begin{aligned}
  \end{aligned}
  \end{align}
  Here we used the notation $\xi=(\xi_1,\ldots,\xi_n)\in\R^n$ and
  $\xi_j':=(\xi_1,\ldots,\xi_{j-1},\xi_{j+1},\ldots,\xi_n)$. Since the estimates for $v_{j,\pm}$ are the same
  for all $j\in\{1,\ldots,n\}$ up to textual modifications we only consider $j=1$. Moreover, the estimates for
  $v_{1,+},v_{1,-}$ only differ at one point (which we will mention), so that   we only carry out the
  estimates for $v^\eps_{1,+}$. To simplify the notation we write
  $v^\eps,\eta,\xi'$ instead of $v^\eps_{1,+},\eta_{1,+},\xi_1'=(\xi_2,\ldots,\xi_n)$.
 
 \medskip
  
 Using $B_R\subset [-R,R]\times\R^{n-1}$   we get
\begin{align} \label{eq:est1}
  \begin{aligned}
  \frac{1}{R}\int_{B_R} |v^\eps (x)|^2+ |\nabla v^\eps (x)|^2 \,dx
  &\leq \frac{1}{R}\int_0^R  \Big( \int_{\R^{n-1}}|v^\eps(x_1,x')|^2+|\nabla v^\eps(x_1,x')|^2\,dx'  \Big)
  \,dx_1  \\
  &= \frac{1}{R}\int_0^R \|(A^\eps\ast f)(x_1,\cdot)\|_{L^2(\R^{n-1})}^2 \,dx_1
  \end{aligned}
\end{align}
 where $A^\eps:=  \mathcal F^{-1}\left( \frac{\eta(\xi)\phi(\xi)(1+|\xi|^2)^{1/2}}{|\xi|^2-1-i\eps}\right)$.
 We first provide some estimates related to this function. We have  
 \begin{align}\label{eq:defPsiteps}
   \begin{aligned}
   \Psi_t^\eps(\xi')
   &:= e^{-i\sqrt{1-|\xi'|^2}t} \mathcal{F}_{\xi_1}^{-1} (\hat A^\eps(\xi_1,\xi'))(t) \\
   &= \frac{1}{\sqrt{2\pi}} \int_\R 
   \frac{\eta(\xi_1,\xi')\phi(\xi_1,\xi')\sqrt{1+|\xi_1|^2+|\xi'|^2}}{\xi_1^2+|\xi'|^2-1-i\eps}
   e^{i(\xi_1-\sqrt{1-|\xi'|^2})t} \,d\xi_1 \\
   &=   \int_\R \psi^\eps(\xi_1,\xi') \cdot \frac{e^{i\xi_1t}}{\xi_1}\,d\xi_1 
    \end{aligned}
 \end{align}
 where
 \begin{align*}
   \psi^\eps(\xi_1,\xi') 
   := 
   \frac{\eta(\xi_1+\sqrt{1-|\xi'|^2},\xi')\phi(\xi_1+\sqrt{1-|\xi'|^2},\xi')\sqrt{2+2\xi_1\sqrt{1-|\xi'|^2}+|\xi_1|^2}}{\sqrt{2\pi}(\xi_1+
   2\sqrt{1-|\xi'|^2} - i\eps/\xi_1)}.
 \end{align*}
 In \eqref{eq:defPsiteps} we used the coordinate transformation $\xi_1\mapsto \xi_1+\sqrt{1-|\xi'|^2}$, which
 is well-defined because of $|\xi'|\leq 1-\delta^2$ by~\eqref{eq:partition_of_unity}. Moreover,
 $$
   \xi_1+2\sqrt{1-|\xi'|^2} 
   > \xi_1 
   \stackrel{\eqref{eq:partition_of_unity}}{\geq} \frac{1-\delta}{\sqrt{2n}}>0
   \qquad\text{for all }\xi\in \supp(\psi^\eps), \eps\in\R. 
 $$
 (In the estimates for $v_{1,-}$ the coordinate transformation to be used is  $\xi_1\mapsto
 \xi_1-\sqrt{1-|\xi'|^2}$ and the above estimate has to be replaced by
 $\xi_1-2\sqrt{1-|\xi'|^2}<\xi_1<-\frac{1-\delta}{\sqrt{2n}}<0$.) So we conclude that $\psi^\eps$ is smooth
 for every fixed $\eps\in\R$. Based on these properties of $\psi^\eps$ we now provide some estimates for $\Psi_t^\eps$.
 
 \medskip
 
 From~\eqref{eq:partition_of_unity} we infer $\eta(s,\xi')=0$ whenever $|\xi'|\geq 1-\delta^2,s\in\R$ so that
 \begin{equation}\label{eq:estimatesPsiteps1}
   \supp(\Psi_t^\eps)\subset B_{1-\delta^2}\qquad\text{for all }t\in\R,\eps\neq 0.
 \end{equation} 
 Moreover, we have for all $m\in\N_0$ and $\xi'\in\R^{n-1},|\xi'|\leq 1-\delta^2$
\begin{align}\label{eq:estimatesPsiteps2}
  \begin{aligned}
   \left| \nabla^m \Psi_t^\eps(\xi') \right| 
   &\stackrel{\eqref{eq:defPsiteps}}{=} \left| e^{i\sqrt{1-|\xi'|^2}t}  \mathcal
   F_1\left(\nabla^m_{\xi'}(\psi^\eps(\cdot,\xi'))\cdot \pv
   \left(\frac{1}{\cdot}\right) \right)(t) \right|\\
   &\leq \left\|\mathcal F_1( \nabla^m_{\xi'}(\psi^\eps(\cdot,\xi')))\ast \mathcal
   F_1\left(\pv\left(\frac{1}{\cdot}\right)\right) \right\|_{L^\infty(\R)} \\
   &\leq \|\mathcal F_1( \nabla^m_{\xi'}(\psi^\eps(\cdot,\xi')))\|_{L^1(\R)}
   \|i\pi\sign(\cdot)\|_{L^\infty(\R)}  \\
    &\leq \pi \|\mathcal F_1(\nabla^m_{\xi'}(\psi^\eps(\cdot,\xi')))\cdot
    (1+|\cdot|^2)(1+|\cdot|^2)^{-1}\|_{L^1(\R)}  \\
    &\leq \pi \|\mathcal F_1( \nabla^m_{\xi'}(-\partial_{\xi_1\xi_1}+1)(\psi^\eps(\cdot,\xi')))\|_{L^\infty(\R)} \|(1+|\cdot|^2)^{-1}\|_{L^1(\R)} \\
    &\leq C  \|  \nabla^m_{\xi'}(-\partial_{\xi_1\xi_1}+1)(\psi^\eps(\cdot,\xi'))\|_{L^1(\R)} \\
    &\leq C_m.
  \end{aligned}   
\end{align}  
From \eqref{eq:estimatesPsiteps1} and \eqref{eq:estimatesPsiteps2} we conclude that for all $m\in\N_0$ there
is a $C_m>0$ such that
\begin{equation}\label{eq:esimatePsiteps3}
  |\nabla^m \Psi_t^\eps(\xi')|\leq C_m(1+|\xi'|)^{-m} \qquad\text{for all }\xi'\in\R^{n-1},t\in\R,\eps\neq 0.
\end{equation}
 
 \medskip
 
 In view of~\eqref{eq:est1} we now use~\eqref{eq:esimatePsiteps3} in order to estimate the term
 \begin{align*} 
  \|(A^\eps\ast f)(x_1,\cdot) \|_{L^2(\R^{n-1})}^2
  &= \int_{\R^{n-1}} (A^\eps\ast f)(x_1,y') (\bar A^\eps\ast \bar f)(x_1,y') \,dy'  \\
  &= \int_{\R} \int_\R \Big( \int_{\R^{n-1}} f(z_1,y') S^\eps_{x_1,z_1,\tilde z_1}(f(\tilde z_1,\cdot))(y')
  \,dy' \Big) \,dz_1 \,d\tilde z_1
\end{align*}
for any given $x_1,z_1,\tilde z_1\in\R,\eps\neq 0$ where $S^\eps_{x_1,z_1,\tilde
z_1}:\mathcal{S}(\R^{n-1})\to\mathcal{S}(\R^{n-1})$ is given by 
$$
  S^\eps_{x_1,z_1,\tilde z_1}g := \bar A^\eps (x_1-\tilde z_1,\cdot)  \ast A^\eps(x_1- z_1,\cdot) \ast \bar g.
$$
The $(L^2,L^2)$-bound for $S^\eps_{x_1,z_1,\tilde z_1}$ results from
\begin{align} \label{eq:L2estimate}
  \begin{aligned}
  \|S^\eps_{x_1,z_1,\tilde z_1}g\|_{L^2(\R^{n-1})}
  &= \|\mathcal F_{n-1}(\bar A^\eps (x_1-\tilde z_1,\cdot)  \ast A^\eps(x_1- z_1,\cdot)) \cdot 
  \bar{\hat g}\|_{L^2(\R^{n-1})} \\
  &\leq \|\mathcal F_{n-1}\left(\bar A^\eps(x_1-\tilde z_1,\cdot) \ast A^\eps(x_1-
  z_1,\cdot)\right)\|_{L^\infty(\R^{n-1})} \|g\|_{L^2(\R^{n-1})}   \\
  &\leq \sup_{t\in\R} \|\mathcal F_{n-1}(\bar A^\eps(t,\cdot))\|_{L^\infty(\R^{n-1})}^2
   \|g\|_{L^2(\R^{n-1})}  \\
  &\leq \sup_{t\in\R,\xi'\in\R^{n-1}}|\mathcal{F}_1^{-1} (\hat A^\eps(\cdot,\xi'))(t)|^2 \cdot
  \|g\|_{L^2(\R^{n-1})} \\
  &= \sup_{t\in\R,\xi'\in\R^{n-1}} |\Psi_t^\eps(\xi')|^2 \cdot \|g\|_{L^2(\R^{n-1})} \\
  &\stackrel{\eqref{eq:esimatePsiteps3}}{\leq}  C \|g\|_{L^2(\R^{n-1})}.
  \end{aligned}
\end{align}
The $(L^1,L^\infty)$-bound is obtained via the method of stationary phase. 
\begin{align*}
  \|S^\eps_{x_1,z_1,\tilde z_1}g\|_\infty
  &\leq \left\| \bar A^\eps (x_1-\tilde z_1,\cdot)  \ast A^\eps(x_1- z_1,\cdot)
  \right\|_\infty \|g\|_1 \\
  &= \left\|\mathcal F_{n-1}^{-1} \left(  \ov{\mathcal{F}_1^{-1}(\hat A^\eps(\cdot,\xi'))(x_1-\tilde
  z_1)} \cdot  \mathcal{F}_1^{-1} (\hat A^\eps(\cdot,\xi'))(x_1-z_1) \right) \right\|_\infty \|g\|_1\\
  &\stackrel{\eqref{eq:esimatePsiteps3}}{=} \left\|\mathcal F_{n-1}^{-1} \left(
  e^{-i\sqrt{1-|\xi'|^2}(x_1-\tilde z_1)} \ov{\Psi^\eps_{x_1-\tilde z_1}(\xi')}\cdot
   e^{i\sqrt{1-|\xi'|^2}(x_1-z_1)}  \Psi^\eps_{x_1-z_1}(\xi') \right) \right\|_\infty \|g\|_1\\
  &= \left\|\mathcal F_{n-1}^{-1} \left( e^{i\sqrt{1-|\xi'|^2}(\tilde z_1- z_1)} 
   \ov{\Psi^\eps_{x_1-\tilde z_1}(\xi')} \Psi^\eps_{x_1-z_1}(\xi') \right) \right\|_\infty \|g\|_1\\
   &= \frac{1}{(2\pi)^{\frac{n-1}{2}}} \sup_{y'\in\R^{n-1}} \left| \int_{\R^{n-1}}
   e^{i(\skp{y'}{\xi'}+\sqrt{1-|\xi'|^2} (z_1-\tilde z_1) )} \ov{\Psi^\eps_{x_1-\tilde z_1}(\xi')}
   \Psi^\eps_{x_1-z_1}(\xi')    \,d\xi'   \right| \|g\|_1.
\end{align*}
In the last integral the smooth phase function $\Phi(\xi'):=\skp{y'}{\xi'}+\sqrt{1-|\xi'|^2}(z_1-\tilde z_1)$ is 
stationary precisely at $\xi' = \frac{\sign(z_1-\tilde z_1)}{\sqrt{|y'|^2+(z_1-\tilde
z_1)^2}}y'$ and the Hessian  in that point
$$ 
  D^2\Phi(\xi') =   \frac{\tilde z_1-z_1}{\sqrt{1-|\xi'|^2}} \left(\Id+\frac{1}{1-|\xi'|^2}\xi'(\xi')^T\right)
  \in\R^{(n-1)\times (n-1)}
$$
possesses the eigenvalues $1,\ldots,1,\frac{1}{1-|\xi'|^2}$, which are all
uniformly bounded away from zero and infinity on the support of $\xi'\mapsto \ov{\Psi^\eps_{x_1-\tilde
z_1}(\xi')} \Psi^\eps_{x_1-z_1}(\xi')$. Moreover, by~\eqref{eq:esimatePsiteps3} all
derivatives of this function are square integrable with $L^2$-norms that are uniformly bounded with respect to
$x_1,\tilde z_1,z_1,\eps$. Hence, the Morse Lemma and the method of stationary phase yield
\begin{equation}  \label{eq:L1estimate}
  \|S^\eps_{x_1,z_1,\tilde z_1}g\|_{L^\infty(\R^{n-1})}
  \leq C(1+|z_1 - \tilde z_1|)^{\frac{1-n}{2}} \|g\|_{L^1(\R^{n-1})}.
\end{equation}
Interpolating the $(L^2,L^2)$-estimate~\eqref{eq:L2estimate} and the
$(L^1,L^\infty)$-estimate~\eqref{eq:L1estimate} we get from the Riesz-Thorin Theorem
\begin{align} \label{eq:ruizvega}
    \|S_{x_1,z_1,\tilde z_1}g\|_{L^{p^\prime}(\R^{n-1})}
    \leq C(1+|z_1 -  \tilde z_1|)^{(1-n)(\frac{1}{p}-\frac{1}{2})}\|g\|_{L^p(\R^{n-1})}
    \qquad\text{for all }p\in [1,2]. 
\end{align}
With this estimate we are finally ready to conclude.

\medskip

So assume $\frac{1}{q}-\frac{1}{2}\geq \frac{1}{n+1}$. Then
$(1+|\cdot|)^{(1-n)(\frac{1}{q}-\frac{1}{2})}$ lies in $L^{\frac{q}{2(q-1)},w}(\R^n)$ so that Young's
inequality for weak Lebesgue spaces implies
\begin{align*}
   &\frac{1}{R}\int_{B_R} |v^\eps (x)|^2+ |\nabla v^\eps (x)|^2 \,dx \\ 
   &\stackrel{\eqref{eq:est1}}{\leq}
    \sup_{x_1\in\R} \|(A^\eps\ast f)(x_1,\cdot) \|_{L^2(\R^{n-1})}^2 \\
    &= \sup_{x_1\in\R} \int_{\R} \int_\R \Big( \int_{\R^{n-1}} f(z_1,y') S^\eps_{x_1,z_1,\tilde z_1}(f(\tilde
    z_1,\cdot))(y') \,dy' \Big) \,dz_1 \,d\tilde z_1 \\
   &\leq \sup_{x_1\in\R}  \int_\R \int_\R \|f(z_1,\cdot)\|_{L^q(\R^{n-1})}  \| S^\eps_{x_1,z_1,\tilde
   z_1}(f(\tilde z_1,\cdot))\|_{L^{q'}(\R^{n-1})} \,dz_1 \,d\tilde z_1 \\
   &\stackrel{\eqref{eq:ruizvega}}{\leq}  C\int_\R \int_\R 
    (1+|z_1-\tilde z_1|)^{(1-n)(\frac{1}{p}-\frac{1}{2})}\|f(z_1,\cdot)\|_{L^q(\R^{n-1})}   
   \|f(\tilde z_1,\cdot)\|_{L^q(\R^{n-1})} \,dz_1 \,d\tilde z_1 \\
   &\leq C\|f\|_q^2.   
\end{align*}
This finally shows~\eqref{eq:estimate_veps} and the proof is finished. \qed

\section*{Acknowledgements}

The author gratefully acknowledges financial support by the Deutsche Forschungsgemeinschaft (DFG) through CRC
1173 ''Wave phenomena: analysis and numerics''.

\bibliographystyle{plain}
\bibliography{doc}
  
\end{document}